\newcommand{\blind}{1}
\def\spacingset#1{\renewcommand{\baselinestretch}%
{#1}\small\normalsize} \spacingset{1}
\theoremstyle{plain}
\newtheorem{theorem}{Theorem}[section]
\newtheorem{corollary}[theorem]{Corollary}
\newtheorem{lemma}{Lemma}[section]
\theoremstyle{definition}
\newtheorem{definition}[theorem]{Definition}
\def\T{{ \mathrm{\scriptscriptstyle T} }}
\newcommand{\bb}[1]{\mathbb{#1}}
\newcommand{\bigO}[1]{\mathcal{O}\left( #1 \right)}
\newcommand{\mc}[1]{\mathcal{#1}}
\DeclareMathOperator{\Nor}{No}
\newcommand{\No}[2]{\Nor\left( #1,#2 \right)}
\def \be{\begin{equs}}
\def \ee{\end{equs}}
\DeclareMathOperator{\correlation}{cor}
\DeclareMathOperator{\pr}{\bb P}
\DeclareMathOperator{\var}{var}
\newcommand{\PR}[1]{\bb P \left( #1 \right)}
\newcommand{\cor}[2]{\correlation\left[ #1,#2 \right]}
\newcommand{\Teff}{T_e}
\newcommand{\E}{\bb E}
\newcommand{\wh}[1]{\widehat{#1}}
\renewcommand{\P}{\mathcal P}
\DeclareMathOperator{\cov}{cov}
\DeclareMathOperator{\Binom}{Binomial}
\newcommand{\beginsupplement}{%
	\setcounter{table}{0}
	\renewcommand{\thetable}{S\arabic{table}}%
	\setcounter{figure}{0}
	\renewcommand{\thefigure}{S\arabic{figure}}%
	\setcounter{section}{0}
	\renewcommand{\thesection}{S\arabic{section}} 
}
\def\input@path{{./}}
\begin{document}
\allowdisplaybreaks


\if1\blind
{
  \title{\bf MCMC for Imbalanced Categorical Data}
  \author{James E. Johndrow \thanks{
    J. Johndrow and D. Dunson acknowledge support from the United States National Science Foundation award number 1546130 and MaxPoint Interactive} \hspace{.2cm} \\
    Statistics Department, Stanford University \\
    Aaron Smith\thanks{A. Smith acknowledges support from the Natural Sciences and Engineering Research Council of Canada.} \\
    Department of Mathematics and Statistics, University of Ottawa\\
    Natesh Pillai \\
    Department of Statistics, Harvard Univeristy \\
    David B. Dunson\\
    Department of Statistical Science, Duke University
    }
    
  \maketitle
} \fi

\if0\blind
{
  \bigskip
  \bigskip
  \bigskip
  \begin{center}
    {\LARGE\bf Title}
\end{center}
  \medskip
} \fi

\begin{abstract}
Many modern applications collect highly imbalanced categorical data, with some categories relatively rare. Bayesian hierarchical models combat data sparsity by borrowing information, while also quantifying uncertainty.  However, posterior 
computation presents a fundamental barrier to routine use; a single class of algorithms does not work well in all settings and practitioners waste time trying different types of MCMC approaches.  This article was motivated by an application to quantitative advertising in which we encountered extremely poor computational performance for common data augmentation MCMC algorithms but obtained excellent performance for adaptive Metropolis.  To obtain a deeper understanding of this behavior, we give strong theory results on computational complexity in an infinitely imbalanced asymptotic regime.  Our results show computational complexity of Metropolis is logarithmic in sample size, while data augmentation is polynomial in sample size.  The root cause of poor performance of data augmentation is a discrepancy between the rates at which the target density and MCMC step sizes concentrate.   In general, MCMC algorithms that have a similar discrepancy will fail in large samples - a result with substantial practical impact.   
\end{abstract}

\noindent%
{\it Keywords:}  Bayesian; Data augmentation; Gibbs sampling; large sample; Markov chain Monte Carlo; mixing
\vfill

\newpage
\spacingset{1.45} 

\section{Introduction}

It has become common to collect very large data sets, but in many settings, there is limited information in the data about many of the unknowns of interest, particularly when data are sparse and imbalanced.   Bayesian approaches are useful for borrowing information and characterizing uncertainty in these settings, but a fundamental barrier to routine implementation is posterior computation.  In general, when we are faced with an applied problem involving Bayesian modeling of complex data, the most time consuming and challenging stage of the implementation is not the choice of the model or priors but the `design' of the MCMC algorithm for posterior computation.  MCMC design \cite{gamerman2006markov, robert2013monte} remains more of an art than a science \cite{van2001art}, with expert Bayesian modelers using their substantial experience in choosing different types of algorithms, and combinations of algorithms, targeted to each new situation.  Although there are a variety of software packages for routine Bayesian computation in broad model classes -- for example Stan \cite{carpenter2016stan} and R-INLA \cite{rue2009approximate} -- such packages often do not work well in large and complex settings.  Bayesian researchers continue to spend substantial time trying out many different types of algorithms before (hopefully) finding ones that work well in a particular setting.

The over-arching goal of this article is to take a step in the direction of improving fundamental understanding of the contexts in which a particular type of MCMC algorithm should work well or not, allowing one to limit the need for trial and error.  Given the sparsity of the relevant literature, we are necessarily quite modest in the scope of problems we focus on, but nonetheless obtain what we feel is a broadly useful result that should help practitioners to take more of a scientific approach to MCMC design.  To formally assess whether an algorithm `works well' we use the lens of computational complexity theory. The goal of MCMC is to obtain samples from the posterior for use in constructing statistical estimators of posterior summaries of interest; we would like these estimators to have low mean squared error even if we have run our algorithms for a limited clock time.  In general, computational efficiency depends on time per iteration and the mixing rate of the Markov chain.  As the `problem size' increases, both of these factors tend to slow down; computational complexity theory describes the rate of this slow down. If the rate is too high, then the MCMC algorithm may be practically useless in `big' problems. Problem size is a general term but may correspond to the sample size, data dimensionality, parameter dimensionality or other aspects measuring the hardness of the problem. 

Most of the existing literature studying the efficiency of MCMC algorithms has focused on showing the Markov chain mixes well in the sense of being geometrically or uniformly ergodic \cite{meyn2012markov, meyn1994computable, roberts1997geometric, rosenthal1995minorization}; for example, refer to \citet{choi2013polya} and \citet{roy2007convergence}, which show such conditions for two of the algorithms studied here.\footnote{A parallel and interesting literature exists on optimal scaling, see e.g. \cite{roberts1997weak}.}  However, we find that such results tell a practitioner very little about whether the algorithm works well in a particular context or not; some simple examples where bounds of this type are very loose are given in \cite{diaconis2008gibbs}.  
Part of this is due to the fact that the constants in the bounds often depend critically on the problem size in a way that is inconsistent with empirical performance \cite{rajaratnam2015mcmc}. Therefore, to study computational compexity, it is necessary to obtain bounds that are sharp as a function of problem size, which is considerably more difficult; \cite{hairer2014spectral} is a prominent (successful) example.  See \cite{belloni2009computational,mossel2006limitations,yang2016computational} for some precedents applying computational complexity theory to MCMC. Most of these studies focus on a single model and MCMC algorithm and show either upper or lower bounds, whereas here we seek to compare different types of algorithms for the same model. 

We are particularly interested in models for categorical data.  In such settings, it is routine to rely on data augmentation to simplify design of MCMC algorithms \cite{tanner1987calculation, damien1999gibbs}.  An amazing variety of clever schemes have been introduced so that one can sample from simple conditional distributions for the parameters after introducing latent data \cite{holmes2006bayesian, fruhwirth2010data, polson2013bayesian, albert1993bayesian}.  Key examples include the Gaussian data augmentation (DA) scheme for probit models of \citet{albert1993bayesian} and the Polya-gamma DA approach for logistic regression of \citet{polson2013bayesian}.  We and many others routinely use these algorithms in all sorts of applied contexts, and they are often remarkably successful.  However, sometimes they fail dramatically for unknown reasons, producing very poor mixing.  In such cases, one can instead avoid introducing latent data, and use more generic Metropolis, adaptive Metropolis or Hamiltonian Monte Carlo (HMC) methods.

Two examples we have encountered include computational advertising (see Section \ref{sec:Application}) and ecological modeling of biological communities (for example, \cite{ovaskainen2015using}).  In these cases, we wasted months developing code, error checking and refining DA-MCMC algorithms before shifting focus to other types of approaches.  However, we noticed a commonality in these problematic applications for DA-MCMC: both involved large and very imbalanced data, with some events or species being rare.  We have found that this behavior occurs routinely, essentially regardless of the type and complexity of the statistical model, if the data are large and imbalanced.  To obtain insight into why this occurs with a goal of providing guidance to practitioners, we carefully study computational complexity of DA-MCMC and Metropolis algorithms under an infinitely imbalanced asymptotic regime introduced by \citet{owen2007infinitely} in studying estimation performance in logistic regression.  

Although our theory is focused on a simple case, the technical details are very far from straightforward, and the results lead to substantial new insight, which has already led to new algorithms \cite{duan2017calibrated}.  In particular, we find that the root case for the poor behavior of DA-MCMC is a discrepancy between the rates at which the target density and MCMC step sizes concentrate.  Such a discrepancy will lead to poor scaling of MCMC algorithms in other contexts as well, and to our knowledge we are the first to notice this. This insight is possible because we consider a non-standard asymptotic framework that more accurately approximates the properties of the posterior in finite samples. An important implication is to avoid DA in large imbalanced data contexts.  

Section 2 motivates the problem and describes the practical behavior of various MCMC algorithms through our computational advertising application.  Section 3 contains our main theoretical results, while providing an intuition.  Section 4 shows that the predictions from our theory hold in broad imbalanced data applications, and Section 5 contains a discussion.  Proofs are included in an Appendix.

\section{Motivating application} \label{sec:Application}
This article was motivated by an application to quantitative advertising. Advertisers seek to optimize the yield or click through rate for display advertisements. There are thousands of websites \emph{serving} ads -- showing display ads for a fee -- and advertisers must bid on these \emph{impressions} -- placements of an ad for their client's website in a particular location on a site serving ads -- in auctions that take place in a fraction of a second when a user navigates to the serving site. Advertisers develop models of the value of showing a particular advertisement to a user given features on the user, serving site, and the site being advertised. 

An important component of these models is the estimated probability $p_i$ that a user visits serving site $i$ and the client site in the same browsing session. The idea is that if visitors to site $i$ tend to be more interested in the client's products than visitors to most other serving sites even without being shown an ad, then a visitor to site $i$ will be more likely to click on an advertisement for the client's product(s). Let $n_i$ be the total number of visitors to serving site $i$ in some study period, and $y_i$ be the number of users who also visited the client's site in the same browsing period, with $i=1,\ldots,N$. The $y_i$ tend to be small -- in many cases, $y_i \le 10$ -- and the empirical probabilities $\frac{y_i}{n_i}$ on the order of $10^{-3}$ to $10^{-5}$. A histogram of logit transformed $\frac{y_i+1}{n_i}$ for the motivating dataset we obtained from MaxPoint Interactive is shown in Figure \ref{fig:EmpiricalProb}. Also shown is a histogram of $\log(y_i+1)$. The data are about 74\% sparse, and of the nonzero observations of $y_i$, the 25th percentile is 7 and the median is 13. 

\begin{figure}
\centering
\includegraphics[width=0.8\textwidth]{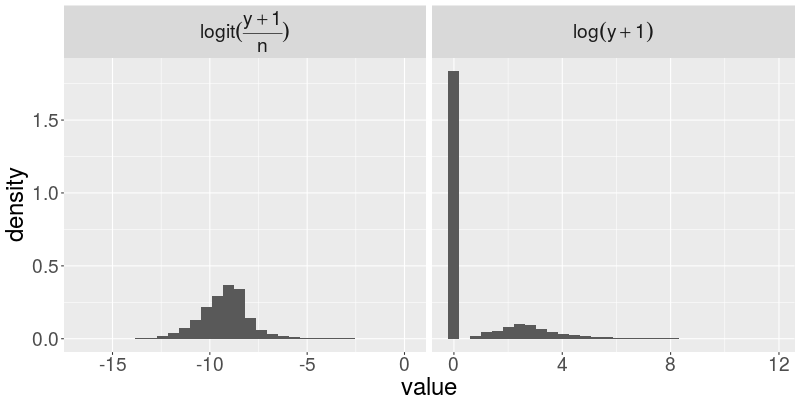}
\caption{Histograms of co-browsing data as described in text.} \label{fig:EmpiricalProb}
\end{figure}

A natural Bayesian approach to obtaining low-risk estimates of the $p_i$ is to borrow information across the serving sites via a hierarchical model:
\be \label{eq:HierModel}
y_i \mid n_i, p_i &\sim \Binom\left( n_i, g^{-1}(\theta_i) \right), \quad \theta_i \stackrel{iid}{\sim} \Nor(\theta_0,\sigma^2) \\
\theta_0 &\sim \Nor(b,B), \quad \sigma \sim \pi(\sigma),
\ee
where $\pi(\sigma)$ is a half Cauchy prior on $\sigma$, recommended as a prior on variance components in hierarchical models by \citet{gelman2006prior} and \citet{polson2012half}. In this application, we use a moderately informative prior of $b=-12$ and $B = 36$, consistent with Figure \ref{fig:EmpiricalProb}, though the results that follow were insensitive to the prior choice. 

Hierarchical generalized linear models are commonly estimated using data augmentation Gibbs samplers of the form
\be \label{eq:daupdate}
\omega \mid \theta, y &\sim p(\omega \mid \theta, y)  \\
\theta \mid \omega, y &\sim \mathrm{No}( \mu(\omega), \Sigma(\omega) ).
\ee
where $p(\omega \mid \theta,y)$ is a \emph{P\'{o}lya-Gamma} distribution when $g^{-1}$ is the inverse logit link, and truncated Gaussian when $g^{-1}$ is the inverse probit link.  Applying this approach to the MaxPoint data in the logistic case, the sampler had remarkably poor efficiency. Running the MCMC sampler for 50,000 iterations, with 30,000 iterations discarded as burn-in, the right panel of Figure \ref{fig:acfhier} shows the empirical autocorrelation function up to lag 100 for the $N=59,317$ different $\theta_i$ parameters.  Even at lag 100, the autocorrelations were high.  In contrast, as shown in the left panel, a simple adaptive Metropolis algorithm, having less computational time per iteration, had dramatically better mixing.

\begin{figure}
\centering
\includegraphics[width=0.8\textwidth]{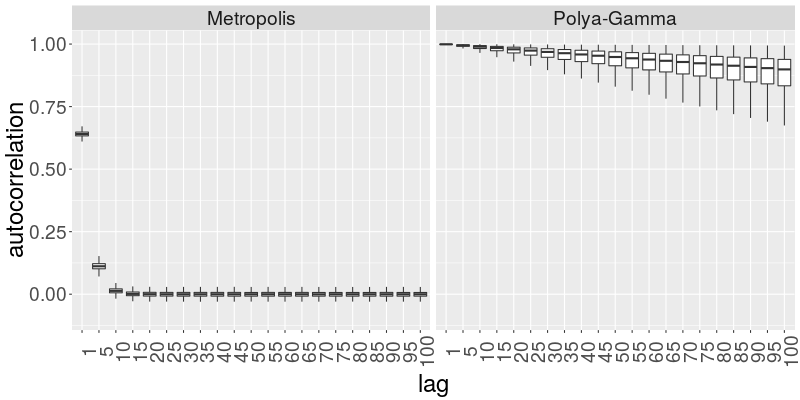}
\caption{Boxplots of estimated autocorrelations for the $\theta_i$ parameters.  Left panel: adaptive Metropolis. Right panel: Polya-gamma data augmentation.  Outliers suppressed for readability.}
 \label{fig:acfhier}
\end{figure}

The much greater efficiency of Metropolis compared with data augmentation persists in the probit case and well beyond this particular setting -- even when there is no hierarchical structure and we have only one site, so long as $y \ll n$.  Figure \ref{fig:acfsimple} shows estimated autocorrelations for data augmentation and Metropolis with logit link for $y=1$ with increasing $n$, i.e.
\be \label{eq:InterceptModel}
y \mid n, \theta \sim \Binom( n,g^{-1}(\theta)), \quad \theta \sim \Nor(0,B)
\ee
with $B=100$. For data augmentation, the autocorrelations increase markedly with $n$, while for Metropolis, the autocorrelations are insensitive to $n$. We have found this behavior to be unrelated to centering the prior on $\theta$ at zero and the choice of $B$. In this case, Metropolis uses a Gaussian random walk proposal with unit variance. 

At least part of the phenomenon observed in this application is a generic feature of data augmentation when the data are imbalanced.  The remainder of this paper aims to demonstrate theoretically why this occurs, showing that it is a general phenomenon for data augmentation samplers in the large sample, highly imbalanced data settings that dominate applications ranging from modeling human behavior in industry settings to ecology.  Our results are of direct interest to practitioners conducting applied Bayesian modeling in these settings, while producing important insights into factors controlling efficiency of augmentation-based MCMC algorithms in broad settings.

\begin{figure}
\centering
\includegraphics[width=0.8\textwidth]{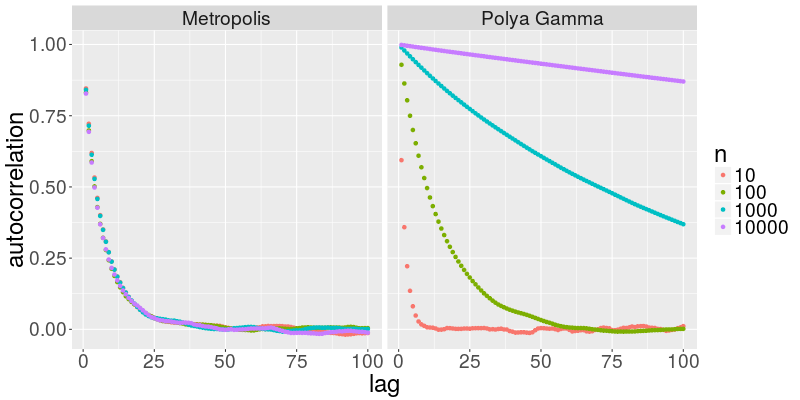} 
\caption{Plot of estimated autocorrelations in the intercept-only case as described in text.} \label{fig:acfsimple}
\end{figure}

\section{Theory results} \label{sec:theory}
In this section, we give a brief overview of the computational complexity of MCMC and its relationship to the \emph{spectral gap} and \emph{conductance} of the associated transition kernel. We then give theoretical results that are consistent with the empirical performance of data augmentation and Metropolis for imbalanced data. Our results are given for the intercept-only case with $y=1$ and increasing $n$, as in Figure \ref{fig:acfsimple}. \citet{owen2007infinitely} referred to this as the \emph{infinitely imbalanced} regime. We focus on this simple case because it reflects the high level of imbalance that we observe in the application, the poor performance of DA in the hierarchical model persists in the intercept only case shown in Figure \ref{fig:acfsimple}, and obtaining bounds that are sufficiently sharp to have relevance to the practical performance of these algorithms is highly non-trivial even in this simple setting. We later show empirically that this behavior is found in a wide variety of data augmentation algorithms for binomial and multinomial likelihoods, including for regression applications, when the data are imbalanced. 

\subsection{Computational complexity of MCMC}
We assume some facility with concepts such as the integrated autocorrelation
time, spectral gap, mixing time, and conductance of a Markov chain, as well
as Gibbs sampling and the Metropolis-Hastings algorithm. Please refer to 
the Supplement for a primer of these concepts. 

Let $\{\Theta_k\}$ be a Markov chain with transition kernel $\mc P$ on 
a Polish state space $\mathcal T$ having invariant measure $\Pi$. 
In our application, $\mathcal T = \bb R^p$. The goal of MCMC is to 
approximate the expectation of functions $f : \mathcal T \to \bb R$
under $\Pi$ by Ces\`{a}ro averages
\be \label{eq:ergavg}
\Pi f = \int_{\mathcal T} f(\theta) \Pi(d\theta \mid y) \approx \frac{1}{T} \sum_{k=0}^{T-1} f(\theta_k) \equiv \widehat{f}_T,
\ee
with $\theta_0,\theta_1,\ldots,\theta_{T-1}$ a single realization of
the Markov chain. A common measure of performance is the MCMC 
mean squared error
\be
\Delta(\Pi f, \widehat f_T) = \bb E \left( \Pi f - \frac1T \sum_{k=0}^{T-1} f(\Theta_k) \right)^2 &= \left( \Pi f - \frac1T \sum_{k=0}^{T-1} \nu \mc P^k f \right)^2 + \frac{1}{T^2} \sum_{j=0}^{T-1} \sum_{k=0}^{T-1} \cov(f(\Theta_j),f(\Theta_k)) \\
&= \text{Bias}^2 + \text{Variance}, \label{eq:MCMCMSE}
\ee
where $\Theta_0 \sim \nu$ and the expectation is with respect to the 
law of $\Theta_0,\ldots,\Theta_{T-1}$. If one can compute
$\Delta$ as a function of $n$ 
and the computational complexity of one step from $\mc P$
is known, then it is natural to measure overall computational complexity
by multiplying these two factors (see e.g. \cite{johndrow2015approximations}). 
For instance, if $\Delta$
converges to $\infty$ at the rate $n^a$ and one step from
$\mc P$ costs $n^b$, then the overall computational complexity
is $n^{a+b}$ up to constants.

The variance term in \eqref{eq:MCMCMSE} motivates
empirical analysis of the performance of the algorithm through estimates
of the autocorrelations $\rho_k$ at lag $k$. Another common empirical 
performance metric is the \emph{effective sample size} $T_e$, which
is roughly proportional to $1/\Delta(\Pi f, \widehat f_T)$. Informally,
$T_e$ is the number of independent samples from $\Pi f$ that would
give variance equal to a path of length $T$ from $\mathcal P$
with $\Theta_0 \sim \Pi$. For interpretability, it is useful to compute
$T_e/T$ or $T_e/t$, where $t$ is total computation (wall clock) time. 

The mean squared error $\Delta$ can also be analyzed theoretically.
For reversible $\mathcal P$, one can obtain both asymptotic (in $T$) and 
finite $T$ bounds on $\Delta$ in terms of the $L^2(\Pi)$ \emph{spectral gap}
$\delta(\mathcal P)$ of $\mathcal P$, with the leading terms order
$1/(\delta T)$ (see \cite{jones2004markov} for asymptotic bounds and 
\cite{rudolf2011explicit,paulin2015concentration} for 
finite-time bounds, among others). 
These bounds are generally given for the
supremum over all $f \in L^2(\Pi)$. 
The asymptotic bound is sharp for worst case functions.
Transition kernels defined by Metropolis 
algorithms are in general reversible. Although the transition kernels
defined by data augmentation Gibbs samplers are not reversible,
the marginal chain for $\theta$ is reversible. Our results for
DA will pertain to the $\theta$-marginal chain.


We use two strategies to obtain bounds on $\delta(\mathcal P)$. The
first is to obtain lower bounds by 
a drift and minorization argument in the style of 
\cite{rosenthal1995minorization}.
The second is to obtain upper bounds by upper bounding the 
\emph{conductance} or \emph{Cheeger
constant} $\kappa$ of $\mathcal P$, and then employing 
the inequality \cite{lawler1988bounds}
\be \label{eq:KappaDelta}
\frac{\kappa^2}{8} \le \delta(\mathcal P) \le \kappa.
\ee
The conductance also gives bounds on the mixing time of $\mathcal P$
when $\nu$ satisfies a ``warm start'' condition. The resulting upper bounds
on mixing times are approximately order $\kappa^{-2}$ (see the Supplement).
By obtaining bounds on $\delta$ or $\kappa$ and studying the rate at which
these bounds converge to zero as $n \to \infty$, we obtain estimates of the
computational complexity of the algorithm. 


\subsection{Main results}
We now give bounds on $\delta$ for a Metropolis algorithm as well as
data augmentation algorithms for logit and probit. The Metropolis result
gives a \emph{lower bound} on the spectral gap, showing that the computational
complexity of the algorithm cannot be worse than $(\log n)^3$. The
results for the data augmentation algorithms give an \emph{upper bound}
on the spectral gap of order $n^{-1/2} (\log n)^k$, with $k=2.5 or 5.5$, 
depending on the algorithm. 
Since each step of the
data augmentation sampler requires sampling $n$ auxiliary variables,
this suggests the computational
complexity is order $n^{3/2} (\log n)^k$. 

In the results that follow, we use the notation $f(n) \gtrsim g(n)$ (or $f(n) \lesssim g(n)$)
to mean 
there exists a constant $C$ and $n_0<\infty$ such that for all $n>n_0$, $f(n) > C g(n)$ 
(or $f(n) < C g(n)$, respectively).
\begin{theorem} \label{thm:mh}
Let $\mc P$ be the transition kernel of a Metropolis algorithm for the 
model in \eqref{eq:InterceptModel} with $y=1$ and proposal kernel
$q(\theta,\cdot) = \text{Uniform}(\theta-\log n, \theta+\log n)$. 
Then 
\be
\delta_n(\mc P) \gtrsim  (\log n)^{-3}.
\ee
\end{theorem}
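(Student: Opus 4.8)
The plan is to combine a geometric drift condition with a minorization condition on a small set in the style of \cite{rosenthal1995minorization}, and then feed these into a quantitative conversion to lower bound $\delta_n(\mc P)$. The first step is to pin down the target. Writing $p = g^{-1}(\theta) = (1+e^{-\theta})^{-1}$, the unnormalized posterior for \eqref{eq:InterceptModel} with $y=1$ is $\pi(\theta) \propto p\,(1-p)^{n-1} e^{-\theta^2/(2B)}$. Solving the stationarity condition $(1-p) - (n-1)p - \theta/B = 0$ locates the mode at $\theta^\star = -\log n + \log\log n + O(1)$, and the curvature $\ell''(\theta^\star) = -np(1-p) - 1/B$ is of order $\log n$, so the posterior concentrates on a window of width of order $(\log n)^{-1/2}$ about $\theta^\star$. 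The tails are controlled: for $\theta$ below $\theta^\star$ the density decays at least exponentially (the likelihood contributes a factor $e^\theta$ and the Gaussian prior only accelerates the decay), while for $\theta$ above $\theta^\star$ the factor $(1-p)^{n-1} \approx e^{-n e^\theta}$ forces super-exponential decay. I would record explicit two-sided bounds on $\pi$ in each tail, working with the exact binomial likelihood rather than its Poisson/Gumbel limit so the estimates hold uniformly in $n$.

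With the geometry in hand I would establish the drift. Since the proposal is symmetric, the net motion is governed by the acceptance ratio $\alpha(\theta,\theta') = \min\{1, \pi(\theta')/\pi(\theta)\}$: from either tail, moves toward $\theta^\star$ raise the density and are accepted, so the chain is pushed inward. Taking a Lyapunov function with matching exponential growth in each tail, for instance $V(\theta) = \cosh\{c(\theta - \theta^\star)\}$ for a suitable $c$, I would bound $\mc P V(\theta) = \int q(\theta,\theta')\{\alpha(\theta,\theta') V(\theta') + (1-\alpha(\theta,\theta')) V(\theta)\}\,d\theta'$ and show $\mc P V \le \lambda_n V + b_n \bone{\theta \in C}$ for a small set $C$, tracking the dependence of $\lambda_n$ and $b_n$ on $n$. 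The subtlety is that the proposal half-width $\log n$ is enormous relative to the target scale $(\log n)^{-1/2}$, so a single proposal from the bulk routinely overshoots the mode by many standard deviations; the drift integral therefore cannot be reduced to a local expansion and must use the global tail bounds from the first step across the entire proposal window.

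For the minorization I would take $C$ to be an interval about $\theta^\star$ of width comparable to the posterior scale $(\log n)^{-1/2}$, on which $\pi$ is bounded above and below by comparable constants. For $\theta,\theta' \in C$ the proposal density equals $(2\log n)^{-1}$ and the acceptance ratio is bounded below uniformly, giving a one-step density on $C$ of order $(\log n)^{-1}$; integrating over $C$ yields $\mc P(\theta,\cdot) \ge \varepsilon_n \nu(\cdot)$ with $\varepsilon_n$ of order $(\log n)^{-3/2}$ and $\nu$ the normalized restriction of $\Pi$ to $C$. Feeding $\lambda_n,b_n,\varepsilon_n$ and the diameter of $C$ into the drift-and-minorization bound then delivers the lower bound on $\delta_n(\mc P)$; the stated $(\log n)^{-3}$ is what this Rosenthal-type bookkeeping produces and is deliberately conservative, since the heuristic above (the chain attempts a within-window move only once every $\approx \log n$ steps) suggests the true gap is larger, of order $(\log n)^{-1}$.

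The main obstacle is the drift step. The nonstandard feature is that the proposal width \emph{grows} while the target width \emph{shrinks}, so the classical random-walk Metropolis drift theory does not apply off the shelf; obtaining a drift inequality with the correct, sharp order for $1-\lambda_n$ requires integrating the exact posterior over proposal windows that dwarf its effective support, and controlling the discrepancy between the exact binomial likelihood and its limiting form to the precision needed for the final $(\log n)^{-3}$ rate.
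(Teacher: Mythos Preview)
Your overall architecture---drift plus minorization fed into Rosenthal---matches the paper's, and your reading of the posterior geometry is correct. The gap is in the choice of small set. You take $C$ to have width of order $(\log n)^{-1/2}$, the posterior scale, but the drift inequality $\mc P V \le \lambda_n V$ must then hold for every $\theta \notin C$, in particular for $\theta$ at distance barely exceeding $(\log n)^{-1/2}$ from the mode. From such a point the proposal is uniform on an interval of width $2\log n$, so only an $O((\log n)^{-3/2})$ fraction of proposals land where $\pi(\theta') \ge \pi(\theta)$; the remainder overshoot into the tails and are rejected with high probability, so the chain stays put and $\mc P V(\theta)/V(\theta) = 1 - O((\log n)^{-3/2})$ at best. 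Hence $\lambda_n$ cannot be bounded away from $1$. You anticipate this (``sharp order for $1-\lambda_n$'') but simply assert that the Rosenthal constants still combine to $(\log n)^{-3}$; with $\lambda_n \to 1$ the return-time factor in Rosenthal degrades and this bookkeeping is not automatic.

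The paper sidesteps the difficulty by taking the small set at the \emph{proposal} scale rather than the posterior scale: $\mathcal C_n = \{|\theta - \hat\theta| \le \log n + \log 6\}$. From outside this wide set, roughly half of all proposals move toward the mode and are accepted with probability one by unimodality, which gives genuine contraction of $V(\theta) = e^{|\theta - \hat\theta|}$ with a \emph{constant} $\lambda = 2/3$ (and $b = e^{\log n}$). The price is that minorization on the wide $\mathcal C_n$ is weak; the paper handles this with a \emph{two-step} argument: from any $\theta \in \mathcal C_n$ the first step lands within $\tfrac{1}{10}\log n$ of $\hat\theta$ with probability bounded below by a constant, and from there the second step hits a fixed interval of width $(\log n)^{-1}$ about $\hat\theta$ with probability $\gtrsim (\log n)^{-2}$, yielding $\mc P_n^2(\theta,\cdot) \ge c(\log n)^{-2}\mu_n$ on $\mathcal C_n$. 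The final $(\log n)^{-3}$ is then the product of $(\log n)^{-2}$ from this minorization and a further $(\log n)^{-1}$ lost in Rosenthal's balancing of the two terms (because $b$ is of order $n$). One smaller technical point you omit: to control acceptance ratios away from the mode the paper does not integrate the exact posterior across the proposal window but instead bounds $\frac{d}{d\theta}\log p(\theta) \le -(\theta - \hat\theta)/B$ and applies Gr\"onwall's inequality, obtaining $\alpha_n(x,y) \le e^{-(x-\hat\theta)(y-x)/B}$ directly; this is what makes the drift integral in the tail tractable and removes the obstacle you flag in your final paragraph.
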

Since the Metropolis algorithm has cost per step that is independent
of $n$, this immediately implies that the computational complexity of 
the algorithm is at worst $(\log n)^3$ via the upper bounds on 
$\Delta(\widehat f_T, \Pi f)$ of order $1/(\delta T)$. 

The proof proceeds by showing a Lyapunov function and a minorization condition, 
then applying
\cite[Theorem 5]{rosenthal1995minorization}. The full proof is given in the 
Appendix, but we highlight an aspect of the argument that is to our knowledge
unusual in a proof of this type and should be useful in proving drift conditions
for Metropolis and Metropolis-Hastings algorithms generally. 

When sampling
from a target density $p(\theta)$, one always has the relationship
\be
p'(\theta) = \left( \frac{d}{d\theta} \log p(\theta) \right) p(\theta).
\ee 
Suppose $z(\theta) >  \frac{d}{d\theta} \log p(\theta)$ for all $\theta$.
Then $p'(\theta) \le z(\theta) p(\theta)$ and by Gr\"{o}nwall's 
inequality
\be
p(\theta) \le p(\theta_0) \exp \left( \int_{\theta_0}^{\theta} z(\theta) d\theta \right).
\ee
The usefulness of this strategy is that to control Metropolis acceptance probabilities,
one needs a bound on the ratio $\frac{p(\theta)}{p(\theta_0)}$. Often, it is easier to bound 
$\frac{d}{d\theta} \log p(\theta)$
than it is to bound $\frac{p(\theta)}{p(\theta_0)}$ directly. In a Bayesian model, 
$p(\theta) \propto \log(L(y \mid \theta)) 
+ \log(\pi(\theta))$, the sum of the log prior and log likelihood. In our setting, applying
the mean value theorem to $\frac{d}{d\theta} \log p(\theta)$ gives us a bounding function
$z(\theta) \le \frac{-\theta}{B}$, and we obtain exponentially decaying acceptance 
probabilities as we move away from the mode. This allows us to show that 
$V(\theta) = \exp(-|\hat{\theta}-\theta|)$ is a Lyapunov function for $\mathcal P$, where
$\hat{\theta}$ is the posterior mode. We expect this strategy to be useful for constructing
good Lyapunov functions for Metropolis and Metropolis-Hastings algorithms for unimodal
targets. In multiple dimensions, Gr\"{o}nwall-Bellman like inequalities for multivariate
differential equations may allow extension of this approach.

The next theorem gives an upper bound for the spectral gap for data augmentation for
logit or probit models.  
\begin{theorem} \label{thm:pgcond}
Let $\mc P$ be the transition kernel of the P\'{o}lya-Gamma data augmentation sampler
for the model in \eqref{eq:InterceptModel} with $y=1$ and $g^{-1}$ the inverse logit function. 
Then the conductance of $\mc P$ satisfies
\begin{align} \label{IneqPgcondMain}
\kappa_n(\mc P) \lesssim \frac{ (\log n)^{5.5}}{\sqrt{n}}.
\end{align}

Similarly, if $\mc P$ is the transition kernel of the Albert and Chib data augmentation sampler
for the model in \eqref{eq:InterceptModel} with $y=1$ and $g^{-1}$ the inverse probit link. Then
the conductance of $\mc P$ satisfies
\be \label{IneqAccondMain}
\kappa_n(\mc P) \lesssim \frac{(\log n)^{2.5}}{\sqrt{n}}.
\ee
\end{theorem}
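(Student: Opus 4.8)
The plan is to bound the conductance directly by exhibiting a single good bottleneck (test) set. Recall that
\[
\kappa_n(\mc P) = \inf_{A\,:\,0 < \Pi(A) \le 1/2} \frac{\int_A \mc P(\theta, A^c)\,\Pi(d\theta)}{\Pi(A)},
\]
so it suffices to find one set $A$ for which the equilibrium flow in the numerator is of order $(\log n)^k/\sqrt{n}$ while $\Pi(A)$ stays bounded away from $0$. The mechanism is the discrepancy emphasized in the introduction: the $\theta$-marginal target $\Pi$ has width of order $1$, whereas a single data-augmentation step moves $\theta$ by an amount of order $n^{-1/2}$ up to logarithmic factors. To set this up, I would first locate $\Pi$: for the logit model with $y=1$ the log-likelihood is $\theta - n\log(1+e^{\theta})$, whose mode $\wh\theta$ solves $g^{-1}(\wh\theta)=1/n$, giving $\wh\theta \approx -\log n$, and a curvature computation shows the observed information at the mode is of order $1$. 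Hence $\Pi$ places a constant fraction of its mass on an interval of width $O(1)$ about $\wh\theta$ and has exponentially light tails on either side. I would then take $A = (-\infty, c]$ with $c$ chosen near $\wh\theta$ so that $\Pi(A)$ lies in, say, $[1/4,1/2]$; this fixes the denominator as a constant and reduces the theorem to bounding the flow $\int_A \mc P(\theta, A^c)\,\Pi(d\theta)$.

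The core step is to show the one-step crossing probability $\mc P(\theta, A^c)$ is tiny. A P\'{o}lya-Gamma step draws $\omega \sim \mathrm{PG}(n,\theta)$ and then $\theta' \sim \No{\mu(\omega)}{\Sigma(\omega)}$ with $\Sigma(\omega) = (\omega + B^{-1})^{-1}$. Since $\EX{\omega} = \tfrac{n}{2\theta}\tanh(\theta/2)$ is of order $n/\log n$ for $\theta$ near $\wh\theta$, I would establish a lower-tail concentration bound forcing $\omega \gtrsim n/\log n$, and hence $\sqrt{\Sigma(\omega)} = O(\sqrt{\log n / n})$, with probability at least $1 - n^{-a}$ for some $a>0$. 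On this event a Gaussian tail estimate makes $\mc P(\theta, A^c)$ negligible unless $\theta$ lies within a band of width $O(\log n/\sqrt{n})$ of $c$; integrating against the $O(1)$ posterior density over this band contributes a flow of order $n^{-1/2}$ up to logarithmic factors, and dividing by the constant $\Pi(A)$ yields \eqref{IneqPgcondMain}.

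The main obstacle, and the true source of the power of $\log n$, is controlling two "bad" contributions to the flow that the clean band estimate ignores: (i) the event that $\omega$ is atypically small, where $\Sigma(\omega)$ is large and one step could leap across $c$; and (ii) the region where $\theta$ lies far to the left of $c$, where the required jump $c-\theta$ is large but the step variance also grows like $|\theta|/n$. For (i) I would use sharp lower-tail bounds for the P\'{o}lya-Gamma law so that the rare large-variance draws contribute negligibly even after integrating over $\theta$. For (ii) I would trade the growth of $\Sigma(\omega)$ against both the exponentially decaying posterior density and the Gaussian cost of a large jump, showing the integrand is summable and dominated by the band near $c$. Making all of these bounds uniform in $\theta$ — in particular choosing how many standard deviations to allow so that the union over the continuum of states still has small failure probability — is where the bookkeeping is delicate and where the exponent $k=5.5$ is pinned down.

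For the Albert--Chib probit sampler the argument is structurally identical, with the P\'{o}lya-Gamma draw replaced by $n$ latent truncated Gaussians whose conditional precision for $\theta$ is \emph{exactly} $n + B^{-1}$, so that $\Sigma(\omega) = (n+B^{-1})^{-1}$ is of order $n^{-1}$ deterministically. This removes bad event (i) entirely, since there is no randomness in the step variance to control. The only substantive change is the location of the target: the probit mode solves $\Phi(\wh\theta)=1/n$, giving $\wh\theta \approx -\sqrt{2\log n}$ rather than $-\log n$, so the $|\wh\theta|$-dependent factors entering the band width and the tail estimates are of order $\sqrt{\log n}$ rather than $\log n$. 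Propagating this through the same flow computation produces the smaller exponent $k=2.5$ in \eqref{IneqAccondMain}.
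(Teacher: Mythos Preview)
Your high-level strategy matches the paper's: exhibit a half-line $A=(-\infty,c]$ cutting the posterior near its median and show the equilibrium flow across $c$ is $O(n^{-1/2})$ up to log factors because a single DA step moves $\theta$ by at most that much with high probability. The paper packages this via a general corollary that bounds $\kappa(S)$ for such a set in terms of an interval $I$ on which the target density is nearly constant together with a Chebyshev-type step-size concentration estimate, but the underlying mechanism is the one you describe.

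There is, however, a concrete error in your posterior-shape analysis. The claim that ``the observed information at the mode is of order $1$'' holds only at the \emph{likelihood} mode $\hat\theta_L\approx-\log(n-1)$, where indeed $n e^{\theta}/(1+e^{\theta})^2\approx 1$. But the prior gradient there is $|\hat\theta_L|/B\approx(\log n)/B$, which shifts the \emph{posterior} mode to a point where $ne^{\hat\theta}/(1+e^{\hat\theta})\approx 1+(\log n)/B$; the posterior curvature at $\hat\theta$ is then of order $(\log n)/B$, not $O(1)$, so the density near the median is not $O(1)$ and your flow estimate understates the numerator. The paper handles this by showing (conservatively) that the density ratio is bounded on an interval of width $(\log n)^{-1}$ containing $\hat\theta$, from which it extracts $C^*=O(\log n)$, $c^*=\Omega((\log n)^{-1})$, and $\Pi(I)=\Omega((\log n)^{-2})$. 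The exponent $5.5$ in \eqref{IneqPgcondMain} then drops out of the corollary as $C^*\zeta/(1-\epsilon)^2=O\big((\log n)\cdot(\log n)^4\cdot\sqrt{(\log n)/n}\big)$; it is driven by these density and mass estimates, not by the tail obstacles (i) and (ii) you describe.

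For the probit case, the remark that the deterministic conditional variance ``removes bad event (i) entirely'' is also off. The conditional variance of $\theta'\mid\omega$ is indeed $(n+B^{-1})^{-1}$, but the conditional mean $\omega/(n+B^{-1})$ is random with $\mathrm{var}(\omega/n\mid\theta_t)=O((\log n)/n)$ near the mode, and this, not the $O(1/n)$ Gaussian noise, is the dominant contribution to $|\theta_{t+1}-\theta_t|$. You still need a concentration argument for $\omega$; the exponent $2.5$ again arises from combining the step-size scale $\sqrt{(\log n)/n}$ with density and mass estimates on the $\sqrt{\log n}$ scale (the paper gets $C^*=O((\log n)^{1/2})$, $c^*=\Omega((\log n)^{-1/2})$, $1-\epsilon=\Omega((\log n)^{-1})$), not from eliminating a branch of the argument.
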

By \eqref{eq:KappaDelta}, this immediately gives asymptotic (in $n$) upper bounds on
the spectral gap $\delta_n(\mathcal P)$ for both algorithms.  Discarding log factors, these bounds show that the spectral gap converges to zero in the square root of sample size or faster, explaining the results in Section 2 showing very poor mixing that got worse and worse with increasing $n$.

To obtain a bound on computational complexity, one must factor in how the time per iteration increases with $n$.  This suggests a computational complexity of order between 
$n^{3/2} (\log n)^c$ and $n^{2} (\log n)^c$ for both algorithms, with $c \le 5.5$. These estimates are 
obtained by applying \eqref{eq:KappaDelta} to the bounds in Theorem \ref{thm:pgcond},
then multiplying by a factor of $n$ for the linear complexity of one step from 
either of these kernels. In the Supplement, we provide empirical evidence that the 
computational complexity is approximately $n^{1.85}$ based on multiple very long 
runs of both algorithms. 


\subsection{Intuition} \label{sec:intuition}
The key cause of order $n^{-1/2}$ conductance for data augmentation for highly imbalanced data is a discrepancy between the width of the high-probability region of the posterior as a function of $n$ and the rate at which the step sizes for these algorithms converge to zero. We give a rough characterization of this phenomenon for P\'{o}lya-Gamma; probit data augmentation is similar. 

When $\theta_t$ is close to the posterior mode, the mean and variance of $\omega$ satisfy
\be
\bb E[\omega_t \mid \theta_t] \approx \frac{n}{\log(n)}, \quad \var[\omega_t \mid \theta_t] \approx \frac{n}{(\log(n))^3},
\ee
so by Chebyshev's inequality, $\omega_{t+1}$ is in the interval $\frac{n}{\log(n)} \pm \sqrt{n} (\log(n))^{-3/2}$ with high probability for large $n$. Also, 
\be 
\bb E[\theta_{t+1} \mid \omega_{t+1}] \approx \frac{-n}{\omega_{t+1}}, \quad \var[\theta_{t+1} \mid \omega_{t+1}] \approx \omega^{-1},
\ee 
and $\theta_{t+1} \mid \omega_{t+1}$ is conditionally Gaussian, so with probability converging to 1 exponentially fast, $\theta_{t+1}$ is in the interval $-\log(n) \pm (\log n)^{3/2} n^{-1/2}$ -- a ball of radius $(\log n )^{3/2} n^{-1/2}$ around the posterior mode, which is approximately $-\log(n)$. This concentration of measure phenomenon means that step sizes in the bulk of the posterior are not much larger than $n^{-1/2}$ with high probability.

On the other hand, As $n \to \infty$, the bulk of the posterior has width at least $(\log n)^{-1}$, \emph{so it is not contracting at the same rate as the step sizes}.  The infinitely imbalanced regime is a non-standard asymptotic setup designed to reflect the extreme imbalance observed in many modern categorical data applications.  In classical statistical asymptotics, the samples size $n \to \infty$ for data of fixed dimension generated from a likelihood with a \emph{fixed true parameter value}.  To mimic high dimensional data applications, it has become popular to study the regime where the dimension of the data and/or parameter $\theta$ to diverge with $n$; the infinitely imbalanced setting instead fixes the number of successes $y$ to effectively drive the true parameter to zero as $n \to \infty$.  This leads the posterior to concentrate around the mode at a rate no faster than $(\log n)^{-1}$, instead of the usual $n^{-1/2}$ rate, consistent with the empirical observation that substantial estimation uncertainty remains in highly imbalanced cases even when sample sizes are huge. We emphasize that this setup was selected \emph{because the data in our motivating application were extremely imbalanced}, and the theoretical results predict the observed performance, and not for theoretical convenience or to obtain any particular result. 

This mismatch between typical step sizes of $\mc P$ and the width of the posterior bulk results in the Markov chain becoming trapped too close to the mode, as illustrated by the graphic in Figure \ref{fig:cartoon}. This reasoning applies to essentially \emph{any MCMC algorithm}: if the width of the high probability region of the posterior and step sizes inside that region are of different order in $n$, then the algorithm will converge slowly and exhibit very high autocorrelations in large samples.

\begin{figure}[h]
\centering
\includegraphics[width=0.4\textwidth]{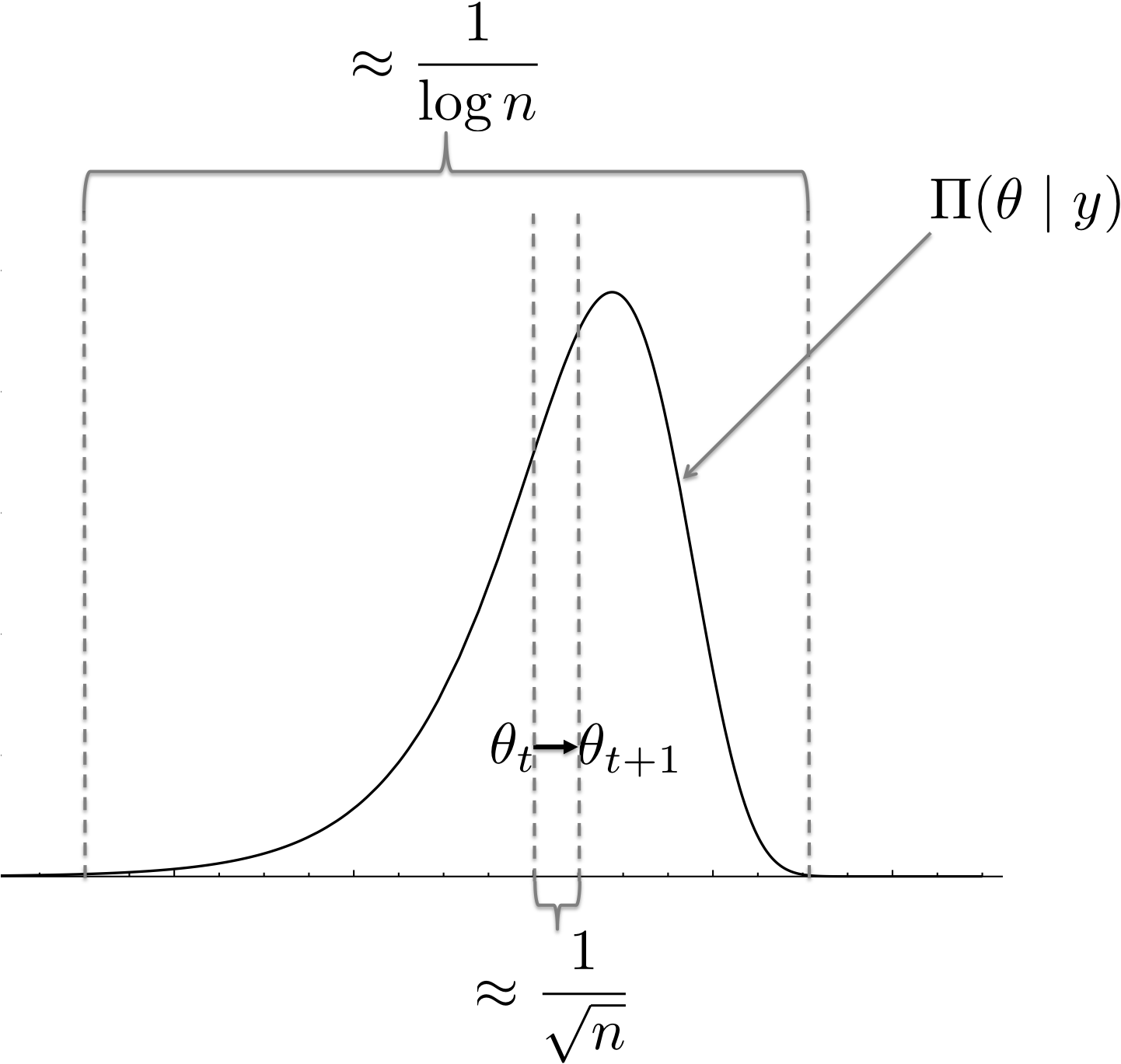}
\caption{Cartoon comparing the high posterior density region and typical move size. } \label{fig:cartoon}
\end{figure}

The success of Metropolis-Hastings in this case is easily explained. The typical step sizes of the kernel can be tuned through the choice of $q$. Since the posterior is contracting at a rate no faster than $(\log n)^{-1}$, one sets $q$ to propose moves larger than $(\log n)^{-1}$. In particular, the bound of $(\log n)^{-3}$ is composed of a factor of $(\log n)^{-1}$ from the return time bound, and a factor of $(\log n)^{-2}$ from an application of two-step minorization using a set of width $\log(n)$ outside of which the posterior is negligible and a set of width $(\log n)^{-1}$ containing the mode. The empirical analysis that follows shows little sensitivity of effective sample size in Metropolis-Hastings or Hamiltonian Monte Carlo algorithms to $n$.

\section{Empirical analysis of general imbalanced data applications}
In this section, we aim to show through simulation studies that poor mixing of data augmentation samplers occurs in many imbalanced data settings, including binomial regression models. In each case, alternative Metropolis algorithms perform much better. In the supplement, we conduct additional simulation studies suggesting that data augmentation algorithms for multinomial logit and probit have similar behavior when data are imbalanced. We conclude this section by returning to the original application. 

\subsection{Binomial logit and probit}
In the first set of examples, we consider the model in \eqref{eq:InterceptModel} with probit and logit link. 
We set $y=1$ and vary $n$ between $10$ and $10,000$. We perform computation using the Albert and Chib data augmentation algorithm for the probit link and the P\'{o}lya-Gamma data augmentation algorithm for the logit link, then estimate autocorrelations and effective sample sizes. 
In each case we use a prior of $b=0, B=100$. 
For probit, we use the implementation in the \texttt{bayesm} package for R. For logit, we use the package \texttt{BayesLogit}. For comparison, we implement random walk Metropolis with $q(\theta,\cdot) \sim \Nor(\theta,1)$ as proposal distribution for the model with logit link.
Table \ref{tab:effsize} shows $\Teff/t$ (rounded to the nearest integer), computed using the \texttt{coda} package for \texttt{R}. Effective samples per second is anemic for the data augmentation Gibbs samplers for large $n$, but largely insensitive to $n$ for random walk Metropolis.

\begin{table}[ht]
\centering
\begin{tabular}{rrrrr}
  \hline
 & n = 10 & n = 100 & n = 1000 & n = 10000 \\ 
  \hline
Albert and Chib & 16421 & 313 & 5 & 0 \\ 
  Polya-Gamma & 95989 & 1623 & 25 & 0 \\ 
  Metropolis & 5106 & 5389 & 5668 & 4922 \\ 
   \hline
\end{tabular}
\caption{$T_e/t$ for data augmentation and Metropolis algorithms
             with $y=1$ and varying $n$} 
\label{tab:effsize}
\end{table}

Although the theoretical results in Section \ref{sec:theory} consider the case where $y=1$ and $n$ is increasing, empirically we observe poor mixing whenever $y/n$ is small. To demonstrate this, we perform another set of computational examples where $y$ and $n$ both vary in such a way that $y/n$ is constant. Specifically, we consider $n=10,000$, $n=20,000$, and $n=50,000$ with $y=1, 2, 5$. Computation is performed for the two data augmentation Gibbs samplers as above, and effective sample sizes and autocorrelation functions estimated. Fig. \ref{fig:acfsygt1} shows estimated autocorrelations, which are similarly near 1 at lag 1 and decay slowly. Table \ref{tab:esygt1} shows values of $\Teff/t$ for the two algorithms. Neither measure of computational efficiency shows a meaningful effect of increasing $y$ when $y/n$ remains constant.
\begin{figure}[h]
\centering
\includegraphics[width=0.7\textwidth]{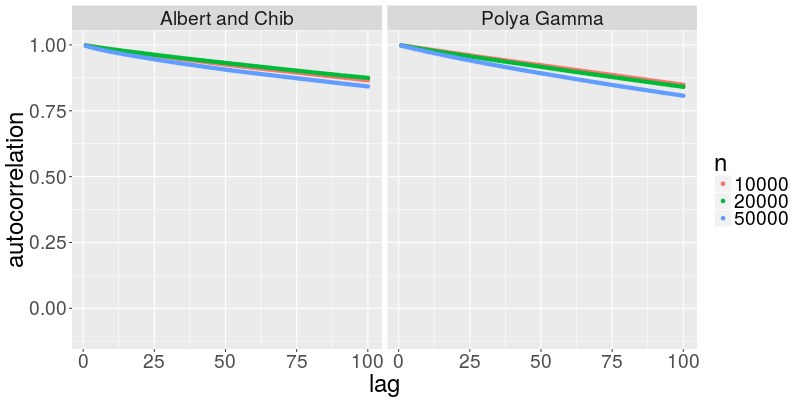}
\caption{Estimated autocorrelation functions for synthetic data examples that vary $y$ and $n$ with $y/n = 10^{-4}$ in each case.} \label{fig:acfsygt1}
\end{figure}

\begin{table}[ht]
\centering
\begin{tabular}{rrrr}
  \hline
 & n = 10000 & n = 20000 & n = 50000 \\ 
  \hline
Albert and Chib & 0 & 0 & 0 \\ 
  Polya-Gamma & 0 & 0 & 0 \\ 
  Metropolis & 4762 & 11220 & 16768 \\ 
   \hline
\end{tabular}
\caption{$T_e/t$ for data augmentation and Metropolis algorithms
             with varying $n$ and $y$ with $y/n=10^{-4}$ in each case.} 
\label{tab:esygt1}
\end{table}


\subsection{Binomial regression}
We now consider a binomial regression model of the form
\be \label{eq:BinomReg}
y_i \mid x_i, \beta &\sim \Binom(n_i,g^{-1}(x_i \beta)),\quad i=1,\ldots,N \quad \beta \sim \Nor(0,B).
\ee
We form the predictor matrix $X$ by putting $x_{i,1} = 1$ and sampling $x_{i,2:p} \sim \text{Uniform}(-1,1)$. We then simulate from \eqref{eq:BinomReg}
where $g^{-1}$ is the inverse logit link, with $\beta_1 = \alpha$ and $\beta_{2:p} \sim \mathrm{No}(0,1)$. We set $n_i = 1,000$ for all $i$, $N=1,000$, and consider $p=20$ and $p=100$. We vary $\alpha$ between $-5$ and $-10$, giving a series of increasingly imbalanced data settings. The means of $y_i$ are given in table \ref{tab:ymeansims}.
\begin{table}[ht]
\centering
\begin{tabular}{rrrrrrr}
  \hline
 & $\alpha=$-5 & $\alpha=$-6 & $\alpha=$-7 & $\alpha=$-8 & $\alpha=$-9 & $\alpha=$-10 \\ 
  \hline
p=20 & 98.83 & 62.27 & 37.63 & 21.86 & 11.98 & 6.00 \\ 
  p=100 & 172.91 & 131.16 & 96.87 & 69.83 & 50.36 & 35.78 \\ 
   \hline
\end{tabular}
\caption{mean of y in simulation study} 
\label{tab:ymeansims}
\end{table}

For each simulation, we perform computation using P\'{o}lya-Gamma data augmentation with the \texttt{BayesLogit} package. In this moderate dimension setting, construction of good Metropolis proposals can be challenging, so we use HMC implemented in \texttt{Stan}, which can be viewed as the use of simulated Hamiltonian dynamics to generate an efficient Metropolis proposal. Results summarized by estimated values of $\Teff/T$ and $\Teff/t$ are shown in Figure \ref{fig:binomreg}. 
In all cases, effective sample size and effective samples per second are orders of magnitude larger for HMC than for PG data augmentation. Additionally, HMC shows little sensitivity to the level of imbalance, while the performance of data augmentation degrades noticeably as the level of imbalance increases. 
\begin{figure}[h]
\centering
\begin{tabular}{cc}
$\Teff/T, p=20$ & $\Teff/t, p=20$ \\
\includegraphics[width=0.4\textwidth]{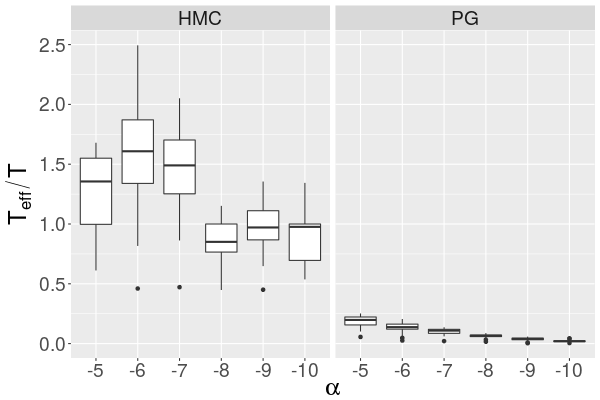} & \includegraphics[width=0.4\textwidth]{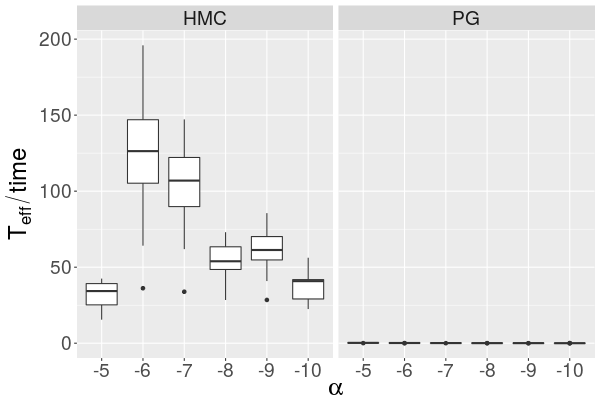} \\
$\Teff/T, p=100$ & $\Teff/t, p=100$ \\
\includegraphics[width=0.4\textwidth]{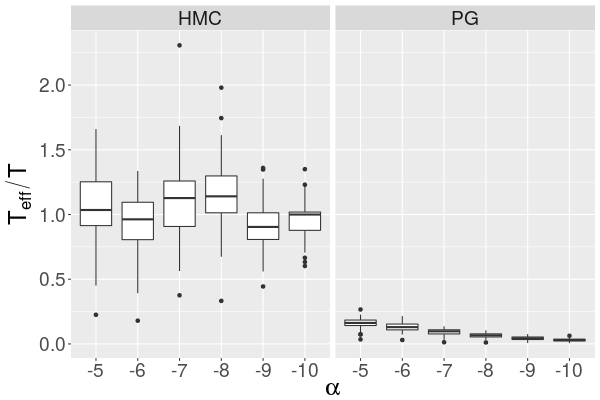} & \includegraphics[width=0.4\textwidth]{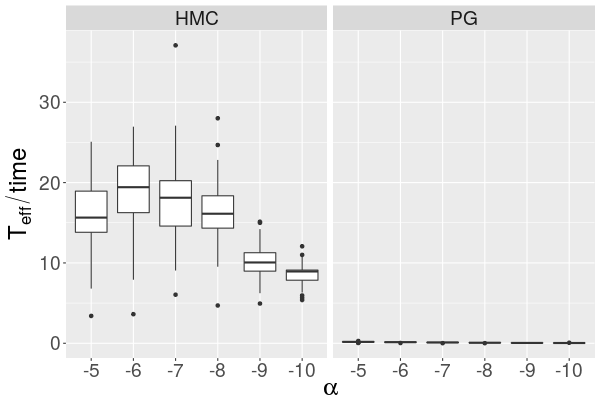} \\
\end{tabular}
\caption{$\Teff/T$ and $\Teff/t$ for general binomial regression examples for $p=20,100$ for logistic regression computation by P\'{o}lya-Gamma data augmentation and Hamiltonian Monte Carlo. Boxplots show distribution of the indicated quantity over the $p$ parameters of the model.} \label{fig:binomreg}  
\end{figure}

\subsection{Quantitative advertising reprise}
We now give details on the adaptive Metropolis algorithm we employed with success in the quantitative advertising application, and provide some additional results. Our alternative to data augmentation for the model in \eqref{eq:HierModel} with logit link has the update scheme
\be \label{eq:MetropolisUpdate}
\text{update }& (\{\theta_i\} \mid \theta_0, \sigma,y) \text{ for } i =1,\ldots,n  \text{ using Metropolis} \\
\text{update }& (\theta_0 \mid \sigma, \theta_1,\ldots,\theta_n) \text{ using Gibbs}  \\
\text{update }& (\sigma \mid \theta_0,\theta_1,\ldots,\theta_n) \text{ using slice sampling}.
\ee
The complete algorithm is given in the Appendix. We detail the Metropolis update here, which we construct using a variation of the adaptive Metropolis algorithm of \citet{haario2001adaptive}. We construct a time-inhomogeneous proposal $\theta_i^*$ for each component $\theta_i$ of $\theta$ using the proposal kernel
\be \label{eq:AdaptiveQ}
q_k(\theta_{ik},\theta^*_i) = \phi( (\theta_i^*-\theta_{ik})^2/\sigma_{ik} ), \quad \sigma_{ik} = \frac{s}{k} \sum_{j=0}^{k-1} (\theta_{ij}-\bar{\theta}_{ik})^2, 
\ee
where $\phi(\cdot)$ is the univariate standard Gaussian density and $\bar{\theta}_{ik}$ is the average of $\theta_{i0},\theta_{i1},\ldots,\theta_{i(k-1)}$, the first $k$ realizations of $\theta_i$. We then make independent Metropolis acceptance decisions for each component $i$; since the $\theta_i$ are all conditionally independent given $\theta_0,\sigma$, these updates are made in parallel. The original adaptive Metropolis update of \cite{haario2001adaptive} suggests making a joint proposal for $\theta$ from multivariate Gaussian with covariance depending on the history of the chain of length $k$. In our case, $\theta$ has dimension 59,317, so this approach is infeasible, but we find this simplified version works well. We use $s=2.4$ in \eqref{eq:AdaptiveQ}, the default recommended in \cite{haario2001adaptive}. 

Figure \ref{fig:ESMaxPoint} shows effective samples per second $T_e/t$ for P\'{o}lya-Gamma data augmentation and the algorithm in \eqref{eq:MetropolisUpdate}. The histogram shows the distribution of $T_e/t$ for all 59,317 $\theta_i$ parameters. This shows definitively that the algorithm in \eqref{eq:MetropolisUpdate} has lower computational complexity by orders of magnitude for this application. 

\begin{figure}[h]
\centering
\includegraphics[width=0.7\textwidth]{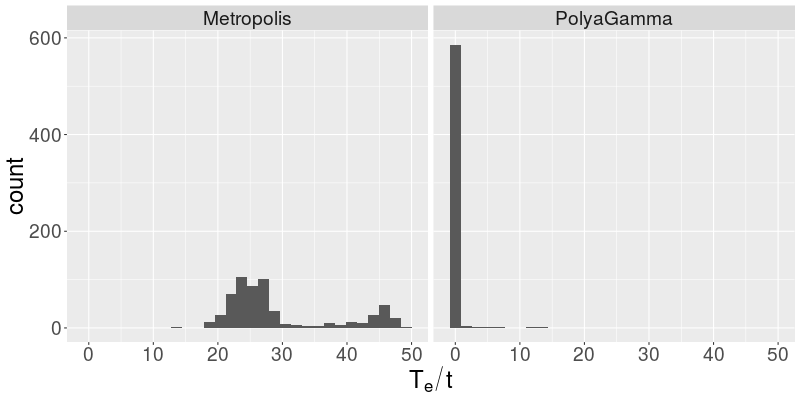} 
\caption{$T_e/t$ for  data augmentation and the algorithm in \eqref{eq:MetropolisUpdate}} \label{fig:ESMaxPoint}
\end{figure}

\section{Discussion}
For several decades, there has been substantial interest in easy to implement and reliable algorithms for posterior computation in generalized linear models. Data augmentation Gibbs sampling, particularly for probit and logit links, has received much of this attention. A series of data augmentation schemes \cite{fruhwirth2010data, holmes2006bayesian, o2004bayesian}, with \citet{polson2013bayesian} being the most popular of the recently developed algorithms, have steadily improved the accessibility of Gibbs sampling for logistic regression. This is a specific case of the larger focus of Bayesian computation on Gibbs samplers, in algorithm development, routine use, and theoretical analysis. The appeal of Gibbs samplers is largely due to their conceptual simplicity, minimal tuning, and widespread familiarity. In addition, there is a common misconception among practitioners that Gibbs samplers are more efficient than alternative Metropolis-Hastings algorithms.

The literature studying theoretical efficiency of Gibbs samplers has largely focused on showing uniform or geometric ergodicity. These bounds often say nothing about the computational complexity of the algorithm.  Here, we obtain upper and lower bounds on computational complexity that explain empirical performance. Although we compare DA and Metropolis for a specific model and imbalanced data setting, the insight we obtain -- that concentration of step sizes may occur at a different rate than concentration of the target -- is relevant generally. We showed step sizes for DA concentrating at rate $1/\sqrt{n}$ but the target concentrating at the slower rate $1/\log n$. Had we selected the ``standard'' asymptotic framework, the target would naturally have concentrated at the usual $1/\sqrt{n}$ rate, implying that DA and Metropolis would have similar performance. This result is uninformative for understanding the failure of DA in large imbalanced data settings, highlighting the importance of moving beyond standard asymptotics when studying algorithmic complexity of MCMC.

\begin{appendix}

\section*{Appendix}
In the proofs, we use $\bigO{g(n)}$ and $\Omega(g(n))$ notation. A function $f(n) = \bigO{g(n)}$ indicates that there exist $C, n_0 < \infty$ such that $n > n_0$ implies $f(n) < C g(n)$. Conversely, $f(n) = \Omega(g(n))$ means that there exists $n_0, C$ such that $n > n_0$ implies $f(n) > C g(n)$. 

\section{Proof of Theorem \ref{thm:mh}}
\subsection{Introduction} \label{SecProbDesc}

In this section, we prove that a properly-tuned Metropolis-Hastings (M-H) algorithm converges quickly when targeting the distribution proportional to
\be 
p_{n}(\theta) = p(\theta) \equiv (1 + e^{\theta})^{-n} \, e^{\theta} \, e^{-\frac{\theta^{2}}{2B}},
\ee 
where $0 < B < \infty$ is a constant and $n \in \mathbb{N}$ is a very large integer.

Our bound will be given for the following Markov chain:

\begin{definition} [Metropolis-Hastings Kernel]
For a fixed sequence $\{ \epsilon_{n} \}_{ n \in \mathbb{N}}$ of strictly positive real numbers, we define the kernel $\P_{n}$ to be the Metropolis-Hastings kernel on $\mathbb{R}$ with proposal kernel
\be 
L_{n}(x,\cdot) = \mathrm{Unif}([x - \epsilon_{n}, x+ \epsilon_{n} ])
\ee 
and target distribution $p_{n}$. 

Throughout this section, we denote by
\be 
\epsilon_{n} &= \log(n) \\
\alpha_{n}(\theta_{1},\theta_{2}) &= \min \left( 1, \frac{p(\theta_{2})}{p(\theta_{1})} \right)
\ee 
the \textit{step size} and \textit{acceptance probability} of the kernel $\P_{n}$, where $p(\cdot)$ is the density of the target $\Pi$. In the interest of using natural notation and avoiding decorations on frequently used symbols, throughout this proof we let $x$ and $y$ represent generic points in $\bb R$ and $X$ represent random variables; we hope this does not cause any confusion with notation used for data in the main text. 
\end{definition}

\subsection{Preliminary Calculations} 

We define $\widehat{\theta} = \mathrm{argmax}_{\theta} p(\theta)$. By straightforward calculus, $\widehat{\theta}$ satisfies 
\be 
\frac{\widehat{\theta}}{B} + n \frac{e^{\widehat{\theta}}}{1 + e^{\widehat{\theta}}} = 1,
\ee 
and so 
\be \label{IneqThetaMax}
\widehat{\theta} = -\log(n) + \bigO{\log(\log n )}.
\ee

We note that $p(\theta)$ has only one local maximum:

\begin{lemma} [Unimodality of $p(\theta)$] \label{LemmaUnimodality}

We have 
\be 
p'(\theta) &>0, \qquad \theta < \hat{\theta} \\
p'(\theta) &<0, \qquad \theta > \hat{\theta}. \\
\ee 

\end{lemma}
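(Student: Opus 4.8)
The plan is to avoid differentiating $p(\theta)$ directly and instead work with the logarithmic derivative. Since $p(\theta) = (1+e^{\theta})^{-n} e^{\theta} e^{-\theta^2/(2B)}$ is strictly positive on all of $\bb R$, the sign of $p'(\theta)$ agrees everywhere with the sign of $\frac{d}{d\theta}\log p(\theta)$, so it suffices to control the latter. First I would compute $\log p(\theta) = -n\log(1+e^{\theta}) + \theta - \frac{\theta^2}{2B}$ and differentiate to obtain
\[
h(\theta) \equiv \frac{d}{d\theta}\log p(\theta) = 1 - \frac{\theta}{B} - n\,\frac{e^{\theta}}{1+e^{\theta}}.
\]
The whole lemma then reduces to showing that $h$ is positive to the left of $\widehat{\theta}$ and negative to the right of it.

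The key observation is that $h$ is \emph{strictly decreasing}. I would argue this term by term: the constant $1$ contributes nothing, the term $-\theta/B$ is strictly decreasing because $B>0$, and $\frac{e^{\theta}}{1+e^{\theta}}$ is the logistic sigmoid, which is strictly increasing, so $-n\,\frac{e^{\theta}}{1+e^{\theta}}$ is strictly decreasing. Hence $h$ is continuous and strictly monotone, and therefore has at most one zero. Next I would note that the first-order condition already displayed, namely $\frac{\widehat{\theta}}{B} + n\frac{e^{\widehat{\theta}}}{1+e^{\widehat{\theta}}} = 1$, is exactly the statement $h(\widehat{\theta}) = 0$. Strict monotonicity then finishes the argument: for $\theta < \widehat{\theta}$ we have $h(\theta) > h(\widehat{\theta}) = 0$, so $p'(\theta) > 0$, and for $\theta > \widehat{\theta}$ we have $h(\theta) < 0$, so $p'(\theta) < 0$.

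There is essentially no serious obstacle here; the only thing to verify with care is the strict monotonicity of each summand of $h$, which makes the sign change transparent. If one wishes to be self-contained about the existence and uniqueness of $\widehat{\theta}$ rather than taking it as given, I would add a one-line boundary check: as $\theta \to -\infty$ the sigmoid term vanishes and $-\theta/B \to +\infty$ so $h(\theta)\to +\infty$, while as $\theta \to +\infty$ we have $-n\frac{e^{\theta}}{1+e^{\theta}}\to -n$ and $-\theta/B \to -\infty$ so $h(\theta)\to -\infty$. Combined with continuity and strict monotonicity, the intermediate value theorem yields a unique zero, which is necessarily the argmax $\widehat{\theta}$, confirming that $p$ has exactly one local maximum.
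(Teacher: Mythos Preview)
Your proof is correct and essentially identical to the paper's: both write $p'(\theta)=f(\theta)p(\theta)$ with $f(\theta)=1-\theta/B-n\,e^{\theta}/(1+e^{\theta})$ (your $h$), show $f$ is strictly decreasing, and use $f(\widehat{\theta})=0$ together with $p>0$ to conclude. The only cosmetic difference is that the paper computes $f'(\theta)=-1/B-n\,\frac{e^{\theta}}{(1+e^{\theta})^{2}}<0$ explicitly rather than arguing monotonicity term by term.
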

\begin{proof}
By direct calculation,
\be 
p'(\theta) &= \left(1 - n \, \frac{e^{\theta}}{1 + e^{\theta}} - \frac{\theta}{B} \right) \, p(\theta). 
\ee 
Define 
\be \label{EqDefF}
f(\theta) = \left(1 - n \, \frac{e^{\theta}}{1 + e^{\theta}} - \frac{\theta}{B} \right),
\ee 
so that 
\be \label{EqDefpPrime}
p'(\theta) = f(\theta) \, p(\theta).
\ee  
We then have that 
\be 
f'(\theta) = -\frac{1}{B} - n \, \frac{e^{\theta}}{1 + e^{\theta}} \left( 1 - \frac{e^{\theta}}{1 + e^{\theta}} \right) \leq - \frac{1}{B} < 0.
\ee 
Since $f'(\theta) < 0$ for all $\theta \in \mathbb{R}$, it follows that $\{ \theta \, : \, f(\theta) = 0\}$ has at most one point. Since $p(\theta) > 0$ for all $\theta \in \mathbb{R}$, we have by Equation \eqref{EqDefpPrime} that 
\be 
\{ \theta \, : \, p'(\theta) = 0\} =  \{ \theta \, : \, f(\theta) = 0\}.
\ee 
Thus, $\{ \theta \, : \, p'(\theta) = 0\}$ has at most one point. Since $p'(\hat{\theta}) = 0$, the lemma follows immediately.
\end{proof}

We bound the \textit{acceptance probability} far from $\hat{\theta}_{n}$:

\begin{lemma} [Bound on Acceptance Probability]
For $\hat{\theta} \leq x \leq y$,
\be \label{IneqUpperBoundAlpha1}
\alpha_{n}(x,y) \leq  e^{-\frac{(y-x)(x+y- 2 \hat{\theta})}{2B}} \leq e^{-\frac{x - \hat{\theta}}{B} (y-x)}.
\ee

For $\hat{\theta} \geq x \geq y$,
\be \label{IneqUpperBoundAlpha2}
\alpha_{n}(x,y) \leq e^{-\frac{(y-x)(x+y- 2 \hat{\theta})}{2B}} \leq e^{-\frac{x - \hat{\theta}}{B} (y-x)}.
\ee 
\end{lemma}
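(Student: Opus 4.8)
The plan is to reduce the statement to a bound on the density ratio and then control that ratio through the log-derivative $f$ already computed in the proof of Lemma \ref{LemmaUnimodality}. Since $\alpha_{n}(x,y) = \min(1, p(y)/p(x)) \leq p(y)/p(x)$, it suffices to bound $p(y)/p(x)$. Using the identity $p'(\theta) = f(\theta)\,p(\theta)$ from \eqref{EqDefpPrime} we have $\frac{d}{d\theta}\log p(\theta) = f(\theta)$, so that
\[
\log \frac{p(y)}{p(x)} = \int_{x}^{y} f(\theta)\, d\theta,
\]
and everything reduces to estimating $\int_x^y f$. This is the integrated (Gr\"onwall) form of the log-derivative bounding strategy flagged in the discussion after Theorem \ref{thm:mh}.

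The key step is a pointwise bound on $f$. Because $\hat{\theta}$ is the mode we have $f(\hat{\theta}) = 0$, and the proof of Lemma \ref{LemmaUnimodality} shows $f'(\theta) \leq -\tfrac{1}{B}$ for all $\theta$. Applying the mean value theorem to $f(\theta) = f(\theta) - f(\hat{\theta})$ gives $f(\theta) = f'(\xi)(\theta - \hat{\theta})$ for some $\xi$ between $\theta$ and $\hat{\theta}$; combining with $f'(\xi) \leq -1/B$ and tracking signs yields
\[
f(\theta) \leq -\frac{\theta - \hat{\theta}}{B} \ \ (\theta \geq \hat{\theta}), \qquad f(\theta) \geq \frac{\hat{\theta} - \theta}{B} \ \ (\theta \leq \hat{\theta}).
\]

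For the regime $\hat{\theta} \leq x \leq y$ I would integrate the first pointwise bound to get
\[
\log \frac{p(y)}{p(x)} \leq -\frac{1}{2B}\big((y-\hat{\theta})^{2} - (x-\hat{\theta})^{2}\big) = -\frac{(y-x)(x+y-2\hat{\theta})}{2B},
\]
using $(y-\hat{\theta})^{2} - (x-\hat{\theta})^{2} = (y-x)(x+y-2\hat{\theta})$; exponentiating gives the first inequality of \eqref{IneqUpperBoundAlpha1}. The regime $\hat{\theta} \geq x \geq y$ is handled symmetrically from the second pointwise bound (integrating $f$ from $y$ up to $x$), producing the same expression \eqref{IneqUpperBoundAlpha2}. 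The final inequality in each display then reduces to the elementary comparison $x+y-2\hat{\theta}$ versus $2(x-\hat{\theta})$: in the first regime $y \geq x$ forces $x+y-2\hat{\theta} \geq 2(x-\hat{\theta})$ with $y-x \geq 0$, so replacing $x+y-2\hat\theta$ by $2(x-\hat\theta)$ only makes the exponent more negative, giving $e^{-(x-\hat\theta)(y-x)/B}$; the second regime is the mirror image.

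The main obstacle I anticipate is not any single computation but the bookkeeping of signs: across the two regimes both $\theta - \hat{\theta}$ and $y - x$ change sign, so the direction of every inequality must be checked against the sign of the factor being multiplied, both in deriving the pointwise bound on $f$ and in passing from the quadratic exponent to the linear one. Once the pointwise bound is established with the correct orientations, both parts of the lemma follow from a single integration and one short algebraic comparison.
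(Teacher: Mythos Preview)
Your proposal is correct and follows essentially the same route as the paper: both use $f(\hat\theta)=0$ together with $f'\leq -1/B$ to obtain $f(\theta)\leq -(\theta-\hat\theta)/B$ on the relevant side of the mode, and then pass from this to the density-ratio bound. The only cosmetic difference is that the paper phrases the last step as an ODE comparison via Gr\"onwall's inequality, whereas you integrate the log-derivative bound directly; these are the same computation, and your presentation is arguably the cleaner of the two. One small wording slip: in the final comparison, replacing $x+y-2\hat\theta$ by the smaller quantity $2(x-\hat\theta)$ makes the exponent \emph{less} negative (not more), which is exactly what is needed for the upper bound to remain valid.
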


\begin{proof}
We prove Inequality \eqref{IneqUpperBoundAlpha1} first. Define $f$ as in Equation \eqref{EqDefF}. Recall from Equation \eqref{EqDefpPrime} that $p'(\theta) = f(\theta) \, p(\theta),$ and by direct calculation
\be 
f'(\theta) &= -\frac{1}{B} - n \, \frac{e^{\theta}}{1 + e^{\theta}} \left( 1 - \frac{e^{\theta}}{1 + e^{\theta}} \right) \leq - \frac{1}{B} < 0 \\
f(\hat{\theta}) &= 0.
\ee 
Thus, for all $z \geq 0$, $f(z + \hat{\theta}) \leq - \frac{z}{B}$. Combining this with Equality \eqref{EqDefpPrime}, we have 
\be 
p'(z + \hat{\theta}) \leq - \frac{z}{B} p(z + \hat{\theta})
\ee 
for all $z \geq 0$. Let $q \, : \, \mathbb{R}^{+} \mapsto \mathbb{R}^{+}$ be the solution to the ODE
\be 
q(0) &= p(x), \quad q'(z) = -\frac{z+(x-\hat{\theta})}{B} q(z).
\ee 
Note that $q(0) = p(x)$ and $q'(z) \leq p'(z + (x-\hat{\theta}))$ for all $z \geq 0$. Thus, by Gronwall's inequality,
\be \label{IneqGronwallConclusion}
p(y) \leq q(y-x).
\ee 
Solving the ODE that defines $q$, we have 
\be 
q(z) = C e^{-\frac{z(2(x-\hat{\theta}) + z)}{2B}}
\ee 
for some $C \in \mathbb{R}$. Solving for $C$, we have 
\be 
p(x) = q(0) = C,\quad \text{ so } \quad q(z) = p(x)  e^{-\frac{z(2(x-\hat{\theta}) + z)}{2B}}.
\ee
By Inequality \eqref{IneqGronwallConclusion}, this implies 
\be 
p(y) &\leq q(y-x) = p(x) e^{-\frac{(y-x)(2 (x - \hat{\theta}) + (y-x))}{2B}} \\
&= p(x) e^{-\frac{(y-x)(x+y- 2 \hat{\theta})}{2B}}.
\ee 
This completes the proof of Inequality \eqref{IneqUpperBoundAlpha1}. The proof of  Inequality \eqref{IneqUpperBoundAlpha2} is essentially identical.
\end{proof}

\subsection{Drift Bounds}

We show that a Markov chain evolving according to $\P_{n}$ will tend to drift towards $\hat{\theta}$:

\begin{lemma} \label{LemmaLyapunovFunction}
Define $c_{n} \equiv 1$, $\delta_{n} = \sqrt{\log(n)}$, and $V_{n} \, : \, \mathbb{R} \mapsto \mathbb{R}^{+}$ by 
\be 
V_{n}(\theta) = e^{c_{n} \, | \theta  - \hat{\theta}_{n}|}.
\ee 
Let $x \in \mathbb{R}^{+}$ and let $X \sim \P_{n}(x,\cdot)$. Then for all $n > N_{0}$ sufficiently large,
\be \label{IneqLyapunovMain}
\E[V_{n}(X)] \leq \frac{2}{3} \,  V_{n}(x)  + e^{\log(n)}.
\ee

\end{lemma}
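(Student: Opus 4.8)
The plan is to write the one-step expectation of $V_n$ as a \emph{drift integral} and then show that proposed moves toward $\hat\theta$ generate enough negative drift to beat the positive contributions from moves away from the mode and from moves that jump across it. Writing $X \sim \P_n(x,\cdot)$ as the outcome of proposing $Y \sim \mathrm{Unif}([x-\epsilon_n,x+\epsilon_n])$ and accepting with probability $\alpha_n(x,Y)$, one has
\[
\E[V_n(X)] = V_n(x) + \frac{1}{2\epsilon_n}\int_{-\epsilon_n}^{\epsilon_n} \alpha_n(x,x+s)\,\bigl(V_n(x+s)-V_n(x)\bigr)\,ds,
\]
so it suffices to bound the integral, call it $D(x)$, by $-\tfrac13 V_n(x) + e^{\log n}$. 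Since $V_n$ is symmetric about $\hat\theta$ and the acceptance bound of the previous lemma holds on both sides of $\hat\theta$ (Inequalities \eqref{IneqUpperBoundAlpha1} and \eqref{IneqUpperBoundAlpha2}), I would treat $x \ge \hat\theta$ and note that $x \le \hat\theta$ is entirely analogous; set $u = x - \hat\theta \ge 0$.

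Next I would split $D(x)$ according to the sign of $s$ and whether the proposal crosses the mode. For $s > 0$ (moving away from $\hat\theta$) I use $V_n(x+s)-V_n(x) = e^{u}(e^s-1)$ together with $\alpha_n(x,x+s)\le e^{-us/B}$ from \eqref{IneqUpperBoundAlpha1}; when $u \ge 2B$ the factor $e^{-us/B}$ dominates $e^s$ and this piece is $O(e^u/\log n)$, negligible relative to $V_n(x)$. For $s \in [-\min(u,\epsilon_n),0]$ (toward $\hat\theta$ but not past it) acceptance is identically $1$ by unimodality (Lemma \ref{LemmaUnimodality}) and $V_n(x+s)-V_n(x)=e^u(e^s-1)<0$; since $\int_{-\epsilon_n}^0 (e^s-1)\,ds = 1-e^{-\epsilon_n}-\epsilon_n \approx -\epsilon_n$, this term alone contributes $\approx -\tfrac12 V_n(x)$ whenever $u \ge \epsilon_n$, which is the source of the contraction. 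Finally, for the mode-crossing piece $s < -u$ (possible only when $u < \epsilon_n$), I bound $\alpha_n \le 1$ and control the positive part via $V_n(x+s) \le e^{\epsilon_n - u}=n e^{-u}$; the $\tfrac{1}{2\epsilon_n}$ prefactor then makes this contribution at most $\tfrac{n e^{-u}}{2\log n}$.

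I would then assemble these estimates over three ranges of $u$. For $u \ge \epsilon_n$ there is no crossing and the toward-mode term gives $D(x) \le (-\tfrac12 + o(1))V_n(x) \le -\tfrac13 V_n(x)$. For $2B \le u < \epsilon_n$ the bounds above give $D(x) \le \tfrac{e^u}{2\log n} + \tfrac{n e^{-u}}{2\log n}$, and a short convexity check in $t = e^u \in [e^{2B},n]$ (the quantity $t(\tfrac1{2\log n}+\tfrac13)+\tfrac{n}{2t\log n}$ is convex, so maximal at the endpoints, and both endpoints are below $n$) shows $D(x) + \tfrac13 V_n(x) \le n$. For $0 \le u < 2B$ we have $V_n(x)=O(1)$, the away and crossing contributions are both $O(n/\log n)=o(n)$, and hence $D(x) \le -\tfrac13 V_n(x) + n$ holds for all large $n$. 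Combining the three ranges yields \eqref{IneqLyapunovMain}.

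The main obstacle is the balance in the middle range: the negative drift from genuinely accepted moves toward the mode must be shown to dominate two distinct positive contributions — the slowly decaying away-from-mode proposals and the large $V_n$ values reachable by a single jump across the mode — uniformly in $u$, while simultaneously verifying that the additive constant $e^{\log n}=n$ (rather than anything smaller) is exactly what is needed near the mode, where no contraction is available. The acceptance bound proved via Gr\"{o}nwall's inequality is the crucial input that tames the away-from-mode integral, and without the $(\log n)^{-1}$ savings supplied by the uniform-proposal normalization the mode-crossing term would not be controllable.
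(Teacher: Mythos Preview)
Your argument is correct and follows the same high-level strategy as the paper (use the Gr\"onwall-based acceptance bound to show contraction far from the mode, and absorb everything near the mode into the additive constant), but your case decomposition in the region $|x-\hat\theta|<\epsilon_n$ is genuinely finer. The paper splits into only two regimes: for $|x-\hat\theta|>\epsilon_n$ it computes $\E[V_n(X)]$ directly, replacing $x-\hat\theta$ in the acceptance bound by the cruder $\delta_n=\sqrt{\log n}$ and obtaining $\E[V_n(X)]\le \tfrac12 V_n(x)(1+O(1/\sqrt{\log n}))$; for $|x-\hat\theta|\le\epsilon_n$ it simply asserts the deterministic bound $V_n(X)\le e^{\log n}$. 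That last step is actually a slip --- since $|X-\hat\theta|$ can reach $2\epsilon_n$, the trivial bound only gives $e^{2\log n}$, which would force the additive constant in \eqref{IneqLyapunovMain} to $n^2$ rather than $n$. Your three-range split with the explicit mode-crossing term and the convexity check in $t=e^u$ recovers the stated constant $e^{\log n}$ honestly; the paper's shortcut, taken literally, does not, though the discrepancy is harmless for the downstream minorization argument.

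One small imprecision: your sentence justifying the crossing bound $\tfrac{ne^{-u}}{2\log n}$ reads as if the pointwise estimate $V_n(x+s)\le ne^{-u}$ combined with the prefactor $\tfrac{1}{2\epsilon_n}$ is enough, but that alone would give $\tfrac{ne^{-u}}{2}$. The claimed bound is correct, but it comes from integrating $V_n(x+s)=e^{-s-u}$ directly, $\int_{-\epsilon_n}^{-u} e^{-s-u}\,ds = ne^{-u}-1$, not from the uniform sup. Since even the weaker $\tfrac{ne^{-u}}{2}$ suffices for your endpoint check in Case~2 (both endpoints still land below $n$), this does not affect the validity of the argument.
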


\begin{proof}
We prove this in three cases:  $x > \hat{\theta}_{n} + \max(\epsilon_{n},\delta_{n})$, $x < \hat{\theta}_{n} - \max(\epsilon_{n},\delta_{n})$, \,and $\hat{\theta}_{n} - \max(\epsilon_{n},\delta_{n}) \leq x \leq \hat{\theta}_{n} + \max(\epsilon_{n},\delta_{n})$. 

We calculate:
\begin{enumerate}
\item \textbf{Case 1:} $x > \hat{\theta}_{n} + \max(\epsilon_{n},\delta_{n})$. \\

In this case,
\be 
2 \epsilon_{n} \,  \E[V_{n}(X)] &= \int_{x-\epsilon_{n}}^{x} V_{n}(y) \alpha_{n}(x,y) dy + \int_{x}^{x+\epsilon_{n}} V_{n}(y) \alpha_{n}(x,y) dy \\
&+ V_{n}(x) \, \int_{x-\epsilon_{n}}^{x+\epsilon_{n}} ( 1 - \alpha_{n}(x,y)) dy\\
&= \int_{x-\epsilon_{n}}^{x} V_{n}(y) dy + \int_{x}^{x+\epsilon_{n}} V_{n}(y) \alpha_{n}(x,y) dy \\
&+ V_{n}(x) \, \int_{x}^{x+\epsilon_{n}} ( 1 - \alpha_{n}(x,y)) dy \\
&= \frac{1}{c_{n}} (1 - e^{-c_{n} \epsilon_{n}}) \, V_{n}(x) + \int_{x}^{x+\epsilon_{n}} V_{n}(y) \alpha_{n}(x,y) dy \\
&+ V_{n}(x) \, \int_{x}^{x+\epsilon_{n}} ( 1 - \alpha_{n}(x,y)) dy, \\
\ee 
where the inequality in the second line follows from Lemma \ref{LemmaUnimodality}. Using Inequality \eqref{IneqUpperBoundAlpha1}, we continue by writing:
\be 
2 \epsilon_{n} \,  \E[V_{n}(X)] &\leq \frac{1}{c_{n}} (1 - e^{-c_{n} \epsilon_{n}}) \, V_{n}(x) + \int_{x}^{x+\epsilon_{n}} V_{n}(y) \alpha_{n}(x,y) dy \\
&+ V_{n}(x) \, \int_{x}^{x+\epsilon_{n}} ( 1 - \alpha_{n}(x,y)) dy, \\ 
&\leq \frac{1}{c_{n}} (1 - e^{-c_{n} \epsilon_{n}}) \, V_{n}(x) + \int_{x}^{x+\epsilon_{n}} e^{-\frac{\delta_{n}}{B} (y-x)} \, V_{n}(y)  dy \\
&+ V_{n}(x) \, \int_{x}^{x+\epsilon_{n}} ( 1 - e^{-\frac{\delta_{n}}{B} (y-x)}) dy \\
&= \frac{1}{c_{n}} (1 - e^{-c_{n} \epsilon_{n}}) \, V_{n}(x) + \int_{x}^{x+\epsilon_{n}} e^{-\frac{\delta_{n}}{B} (y-x)} \,e^{c_{n}(y-\hat{\theta})}  dy \\
&+ V_{n}(x) \, \int_{x}^{x+\epsilon_{n}} ( 1 - e^{-\frac{\delta_{n}}{B} (y-x)}) dy \\
&= \frac{1}{c_{n}} (1 - e^{-c_{n} \epsilon_{n}}) \, V_{n}(x) + e^{\frac{\delta_{n}}{B}x} e^{-c_{n} \hat{\theta}} \int_{x}^{x+\epsilon_{n}} e^{(c_{n}-\frac{\delta_{n}}{B}) y}  dy \\
&+ V_{n}(x) (\epsilon_{n} - \frac{1}{c_{n}} (1 - e^{-c_{n} \epsilon_{n}})) \\
&= \epsilon_{n} V_{n}(x) + e^{\frac{\delta_{n}}{B}x} e^{-c_{n} \hat{\theta}} \frac{1}{c_{n} - \frac{\delta_{n}}{B}} e^{(c_{n} - \frac{\delta_{n}}{B})x} (e^{(c_{n} - \frac{\delta_{n}}{B}) \epsilon_{n}} - 1) \\
&=  V_{n}(x) (\epsilon_{n} + \frac{1}{c_{n} - \frac{\delta_{n}}{B}} (e^{(c_{n} - \frac{\delta_{n}}{B})\epsilon_{n}}-1)) \\
&= V_{n}(x) \, \epsilon_{n} \, \bigg(1 + \mathcal{O}\bigg(\frac{1}{\sqrt{\log(n)}}\bigg)\bigg).
\ee 
This completes the proof of Inequality \eqref{IneqLyapunovMain} in the first case.

\item \textbf{Case 2:} $x < \hat{\theta}_{n} - \max(\epsilon_{n},\delta_{n})$. \\

The proof of this case is essentially identical to the proof of the first case, with the exception that Inequality \eqref{IneqUpperBoundAlpha2} is used in place of Inequality \eqref{IneqUpperBoundAlpha1}, and this change is propogated through the remaining calculations. The details are omitted.

\item \textbf{Case 3:}  $\hat{\theta}_{n} - \max(\epsilon_{n},\delta_{n}) \leq x \leq \hat{\theta}_{n} + \max(\epsilon_{n},\delta_{n})$.

In this case, we have 
\be 
V_{n}(X) &\leq e^{c_{n} \max(\epsilon_{n}, \delta_{n})} \\
&= e^{\log(n)}. 
\ee 

\end{enumerate}

\end{proof}

\subsection{Minorization Condition}

Define the \textit{small set} 
\be 
\mathcal{C}_{n} = \{ \theta \in \mathbb{R} \, : \, V_{n}(\theta) \leq 6 e^{\log(n)} \} = \{ \theta \in \mathbb{R} \, : \,| \theta - \hat{\theta}| \leq \log(n) + \log(6) \}.
\ee

We have:
\begin{lemma} [Minorization on Small Set] \label{LemmaMinSmall}
There exists some constants $c, N_{0} > 0$ and some sequence of probability measures $\{ \mu_{n} \}_{n \in \mathbb{N}}$ so that
\be 
\P_{n}^{2}(x,\cdot) \geq \frac{c}{\log(n)^{2}} \, \mu_{n}(\cdot)
\ee 
for all $n > N_{0}$ and all $x \in \mathcal{C}_{n}$.
\end{lemma}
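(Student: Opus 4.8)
The plan is to establish the minorization by a two-step argument, which is genuinely necessary here: because $\mathcal{C}_n$ has radius $\log n + \log 6$, strictly larger than the proposal radius $\epsilon_n = \log n$, the two extreme points of $\mathcal{C}_n$ share no common point reachable in a single step, so a one-step minorization cannot hold. I would introduce a \emph{wide} intermediate set and a \emph{narrow} target set, both centred at the mode,
\[
B_n = \big[\hat{\theta} - (\log n - s_n),\ \hat{\theta} + (\log n - s_n)\big], \qquad S_n = \big[\hat{\theta} - s_n,\ \hat{\theta} + s_n\big], \qquad s_n = (\log n)^{-1},
\]
and bound
\[
\P_n^2(x, A) \ \ge\ \int_{B_n} \P_n(x, dz)\, \P_n(z, A) \ \ge\ \P_n(x, B_n)\, \inf_{z \in B_n} \P_n(z, A).
\]
The minorizing measure will be $\mu_n = \mathrm{Unif}(S_n)$, and the factor $(\log n)^{-2}$ will appear transparently as the ratio of the width $2 s_n = 2(\log n)^{-1}$ of $S_n$ to the proposal scale $\epsilon_n = \log n$ --- precisely the two-scale mechanism described in Section \ref{sec:intuition}.

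First I would establish the inner (second-step) bound $\inf_{z \in B_n} \P_n(z, A) \ge \frac{c_\alpha}{2 \log n}\, \mathrm{Leb}(A \cap S_n)$. For every $z \in B_n$ the proposal interval $[z - \log n, z + \log n]$ contains all of $S_n$, so each point of $S_n$ is proposed with density $(2 \log n)^{-1}$, and it remains to bound $\alpha_n(z, y)$ below by a constant $c_\alpha$ for $y \in S_n$. The key is that $p$ is nearly flat across the narrow set $S_n$: since $f(\hat{\theta}) = 0$ and the expression for $f'$ in Lemma \ref{LemmaUnimodality} gives $|f'| = O(\log n)$ near $\hat{\theta}$ (which is $-\log n + O(\log \log n)$ by \eqref{IneqThetaMax}), one has $|\log p(\theta) - \log p(\hat{\theta})| \le \max_{S_n}|f|\, s_n = O(\log n\, s_n^2) = O((\log n)^{-1})$ throughout $S_n$. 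Combined with unimodality (which forces $p(z) \le \max_{\partial S_n} p$ when $z \notin S_n$), this yields $\alpha_n(z, y) = \min(1, p(y)/p(z)) \ge 1 - o(1) =: c_\alpha$ and identifies $\mu_n$ with $\mathrm{Unif}(S_n)$ up to the same $1 + o(1)$ factor.

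The crux is the outer bound: a uniform \emph{constant} lower bound $\P_n(x, B_n) \ge c_1$ over all $x \in \mathcal{C}_n$, which I would split according to whether $x \in B_n$. For $x \in B_n$ I would use the holding (rejection) mass, which keeps the chain at $x \in B_n$ and so contributes the rejection probability $r(x)$ to $\P_n(x, B_n)$; proving $r(x) \ge \tfrac12 - o(1)$ reduces to bounding the acceptance integral $\int_{x - \log n}^{x + \log n} \alpha_n(x, y)\, dy \le \log n + O(1)$. Here the superlevel set $\{p \ge p(x)\}$ lies to one side of $x$ by unimodality and hence meets the proposal interval in length at most $\log n$, while the remaining overshoot region (where $p(y) < p(x)$) contributes only $O(1)$ by log-concavity of $p$ (Lemma \ref{LemmaUnimodality} gives $-f' \ge B^{-1}$): its one-sided tails integrate to at most $|f(x)|^{-1} = O(1)$ when $x$ is bounded away from the mode, and to at most $\big(\int p\big)/p(x) = O(1)$ via $p(\theta) \le p(\hat{\theta})\, e^{-(\theta - \hat{\theta})^2 / 2B}$ when $x$ is within $O(1)$ of it. For $x$ in the thin shell $\mathcal{C}_n \setminus B_n$, where the holding mass no longer lands in $B_n$, I would instead use proposals directed toward the mode: if $x > \hat{\theta}$, every $y \in [\hat{\theta}, \hat{\theta} + \log n - s_n] \subset B_n$ is accepted with probability one by Lemma \ref{LemmaUnimodality} and lies within one proposal radius of $x$, contributing mass $\tfrac12 - o(1)$, and the case $x < \hat{\theta}$ is symmetric. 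Either way $\P_n(x, B_n) \ge c_1$ with, say, $c_1 = \tfrac13$ for $n$ large.

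Combining the two steps gives $\P_n^2(x, A) \ge c_1\, \frac{c_\alpha}{2 \log n}\, \mathrm{Leb}(A \cap S_n) = \frac{c_1 c_\alpha}{(\log n)^2}\, \mu_n(A)$, the claim with $c = c_1 c_\alpha$. I expect the main obstacle to be exactly this uniform first-step bound: controlling the acceptance integral for $x \in B_n$ hinges on using log-concavity of the target to tame the overshoot tails, while the thin shell $\mathcal{C}_n \setminus B_n$ must be handled by a separate directed-move argument, and the two mechanisms have to be reconciled into a single $n$-independent constant.
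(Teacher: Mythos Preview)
Your proposal is correct and follows the same two-step minorization architecture as the paper, but with different choices that make your first step considerably harder than it needs to be. The paper takes the intermediate set to be $\{|y-\hat\theta|<\tfrac{1}{10}\log n\}$, i.e.\ of radius a fixed \emph{fraction} of the proposal radius, rather than your near-maximal $B_n$ of radius $\log n - s_n$. With that smaller set, the first step is essentially a one-liner: for $x$ inside the intermediate set, $\P_n(x,\mathcal E_n)\ge \Pr[\text{propose into }\mathcal E_n]=\tfrac{1}{10}$ because any proposal to $\mathcal E_n$ leaves the chain in $\mathcal E_n$ whether accepted or rejected; for $x\in\mathcal C_n\setminus\mathcal E_n$ the directed-move argument you already sketch gives mass $\gtrsim\tfrac{1}{20}$. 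Your rejection-mass calculation (bounding the acceptance integral by $\log n+O(1)$ via log-concavity) is valid but avoidable: the same ``propose into $B_n$'' observation already yields $\P_n(x,B_n)\ge\tfrac12$ for $x\in B_n$ without ever touching the overshoot tails, so the log-concavity machinery and the split into ``$x$ near/far from the mode'' can be dropped entirely. Your second step and the paper's are essentially identical --- the paper simply cites the near-constancy bound \eqref{IneqNearConstBd}, which is the integrated form of your pointwise estimate $|f'|=O(\log n)$ near $\hat\theta$.
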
 

\begin{proof}
Let $X_{1} \sim \P_{n}(x,\cdot)$ and then, conditional on $X_{1}$, let $X_{2} \sim \P_{n}(X_{1},\cdot)$.

Define the event $\mathcal{E}_{n} = \{ |X_{1} - \hat{\theta}| < \frac{1}{10} \log(n) \}$. It is clear that there exists some uniform constant $c_{1} > 0$ so that 
\be \label{IneqMin1}
\bb P[\mathcal{E}_{n}] \geq c_{1}.
\ee 
By \eqref{IneqNearConstBd}, there exists some constant $c_{2}$ so that 
\be \label{IneqMin2}
\P_{n}(y,\cdot) \geq \frac{c_{2}}{\log(n)^{2}} \mathrm{Unif}\bigg(\bigg[\hat{\theta}_{n} - \frac{c_{2}}{\log(n)},\hat{\theta}_{n} + \frac{c_{2}}{\log(n)}\bigg]\bigg)
\ee 
for all $|y - \hat{\theta}| \leq \frac{1}{10} \log(n)$. The result now follows, with $\mu_{n} = \mathrm{Unif}([\hat{\theta}_{n} - \frac{c_{2}}{\log(n)},\hat{\theta}_{n} + \frac{c_{2}}{\log(n)}])$, by combining Inequalities \eqref{IneqMin1} and \eqref{IneqMin2}.
\end{proof}

By Theorem 5 of \cite{rosenthal1995minorization}, Lemmas \ref{LemmaLyapunovFunction} and \ref{LemmaMinSmall} together imply that 
\be 
\| \P_{n}^{T}(\theta,\cdot) - \Pi(\cdot) \|_{\mathrm{TV}} \leq \bigg(1 - \frac{c_{1}}{\log(n)^{2}}\bigg)^{\frac{T}{c_{2} \log(n)}} + c_{3}^{T}(e^{|\theta - \hat{\theta}|} + c_{4} n),
\ee 
for some constants $c_{1},c_{2},c_{3},c_{4} > 0$ that do not depend on $n$, where $0 <c_{3} < 1$ and $c_1, c_2$ are distinct from the constants $c_1, c_2$ in the proof of Lemma \ref{LemmaMinSmall}. By Theorem 2 of \cite{roberts1997geometric}, this completes our proof.

\section{Proof of Theorem \ref{thm:mh}}
\subsection{Preparatory results}

The following Corollary to Theorem \ref{ThmLawSok} will be used to obtain upper bounds on the conductance and spectral gap. Observe that because the set that we consider in this Corollary is a product of the entire sample space for $\omega$ with a subset of the sample space for $\theta$, verifying this Corollary immediately gives a bound on the conductance for the $\theta$-marginal chain, which is reversible.
\begin{corollary} \label{CorCondIneq}
Let $(\theta_{t}, \omega_t)$ be a data augmentation Markov chain on state space $\Omega_{1} \times \Omega_{2} \subset \mathbb{R} \times \mathbb{R}^{n}$. Denote by $\mc P = \mc P_{1} \mc P_{2}$ the transition kernel of this chain, where $\mc P_{1}[(\theta, \omega), \Omega_{1} \times \{ \omega \}] = \mc P_{2}[(\theta,\omega), \{\theta \} \times \Omega_{2}] = 1$ for all $(\theta, \omega) \in \Omega_{1} \times \Omega_{2}$. Denote by $\Pi$ the stationary measure of $\mc P$, and denote by $\Pi_{1}$ and $\Pi_{2}$ the marginals of this stationary measure on $\Omega_{1}$ and $\Omega_{2}$; denote by $\mu$, $\mu_{1}$ and $\mu_{2}$ their densities.  Assume that there exists an interval $I = (a,b) \subset \Omega_{1}$ that satisfies
\be 
\Pi_{1}(I) &\geq 1 - \epsilon \label{IneqCor1} \\
c^* \leq \inf_{\theta \in I} \mu_{1}(\theta) &\leq \sup_{\theta \in I} \mu_{1}(\theta) \leq C^* \label{IneqCor2} \\
\sup_{\theta \in I, z \in \Omega_{2}} & \bb P\left[ (\theta_{s+1}-\theta_{s})^{2} > r \zeta \mid (\theta_{s}, \omega_{s}) = (\theta,z) \right]  \leq  r^{-2} + \gamma \label{IneqCor3}
\ee 
for some $\epsilon, \zeta > 0$, $0 \leq \gamma < \infty$ and $0 < c^* < C^* < \infty$, and for all $0 \le r < (1-\epsilon)/(4c^*)$. Since \eqref{IneqCor3} is trivial for $r < 1$, this is equivalent to \eqref{IneqCor3} holding for $1 \le r \leq (1-\epsilon)/(4c^*)$. Assume that $\zeta \leq \frac{1-\epsilon}{4C^*}$. Then 
\begin{align*} 
\delta(\mc P) \leq \kappa(\mc P) \le  \frac{16 C^* \zeta }{(1-\epsilon)^{2}} + \frac{2C^* \gamma}{c^*(1-\epsilon)}.
\end{align*} 
\end{corollary}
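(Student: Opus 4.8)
The plan is to bound the conductance $\kappa(\mathcal{P})$ directly by exhibiting a single test set, and then obtain the spectral gap bound for free from the left inequality of \eqref{eq:KappaDelta}. By the remark preceding the statement, it is enough to work with product sets $A = S \times \Omega_2$, for which the analysis collapses onto the reversible $\theta$-marginal chain: since $\mathcal{P} = \mathcal{P}_1 \mathcal{P}_2$ and $\mathcal{P}_2$ fixes $\theta$, the $\theta$-coordinate moves only through $\mathcal{P}_1$, so the ergodic flow satisfies $Q(A,A^c) = \int_{S \times \Omega_2} \mathbb{P}[\theta_{s+1} \in S^c \mid \theta_s = \theta, \omega_s = \omega]\,\Pi(d\theta,d\omega) \le \int_S \big(\sup_{z} \mathbb{P}[\theta_{s+1} \in S^c \mid \theta,z]\big)\mu_1(\theta)\,d\theta$, which is exactly the quantity controlled by \eqref{IneqCor3}. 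I would invoke Theorem \ref{ThmLawSok} to reduce $\kappa$ to an infimum of $Q(A,A^c)/\min\{\Pi(A),\Pi(A^c)\}$ over sets, so that any admissible $S$ furnishes an upper bound.

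First I would construct the test set. Take $S = (a,m)$ with $m \in (a,b)$ chosen so that $\Pi_1((a,m)) = \tfrac{1}{2}(1-\epsilon)$; this is possible by \eqref{IneqCor1} and continuity and forces $m \in I$, with $\Pi(A) = \tfrac{1}{2}(1-\epsilon) \le \tfrac12$, hence $\min\{\Pi(A),\Pi(A^c)\} = \tfrac{1}{2}(1-\epsilon)$. The structural point is that \emph{both} endpoints of $S$ lie inside $I$, so every transition leaving $A$ starts from a state with $\theta_s \in I$, precisely where the step-size tail bound \eqref{IneqCor3} is valid; this is what lets me avoid any uncontrolled contribution from the low-probability set $\Omega_1 \setminus I$. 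By \eqref{IneqCor2} the width of $S$ is pinned down up to constants, $\tfrac{1-\epsilon}{2C^*} \le m-a \le \tfrac{1-\epsilon}{2c^*}$, and this is the device that converts Lebesgue lengths into $\Pi_1$-masses in the flow estimate.

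Next I would estimate the out-flow. A transition leaving $(a,m)$ from $\theta$ requires a step of magnitude at least $d(\theta) := \min(\theta - a,\, m - \theta)$, so the integrand is at most $\mathbb{P}[(\theta_{s+1}-\theta_s)^2 > d(\theta)^2]$. Setting $r = d(\theta)^2/\zeta$ and applying \eqref{IneqCor3} gives the pointwise bound $r^{-2} + \gamma = \zeta^2/d(\theta)^4 + \gamma$ on the range where $r$ lies in the admissible window $[1,(1-\epsilon)/(4c^*)]$, together with the trivial bound $1$ on the near-boundary strip $\{d(\theta) < \sqrt{\zeta}\}$. Integrating against $\mu_1$, I would use $\mu_1 \le C^*$ on the near-boundary and $r^{-2}$ pieces and $\mu_1$-mass $\le \Pi(A)$ on the $\gamma$ piece; the hypothesis $\zeta \le (1-\epsilon)/(4C^*)$ is what keeps the relevant crossing distances below the half-width of $S$ and inside the validity window of \eqref{IneqCor3}. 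Dividing by $\min\{\Pi(A),\Pi(A^c)\} = \tfrac12(1-\epsilon)$ is designed to assemble the two advertised terms: the near-boundary and $r^{-2}$ contributions into $16C^*\zeta/(1-\epsilon)^2$, and the $\gamma$-floor into $2C^*\gamma/(c^*(1-\epsilon))$ after the length-to-mass conversion through $c^*$ and $C^*$.

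The main obstacle is this flow estimate itself, and in particular producing exactly the claimed dependence on $\zeta$ with the stated constants. A naive crossing-distance integral splits into a near-boundary piece and a tail piece that must be balanced against both the admissible range $r \le (1-\epsilon)/(4c^*)$ of \eqref{IneqCor3} and the width of $S$; it is the joint constraints $\zeta \le (1-\epsilon)/(4C^*)$ and $\Pi_1(I) \ge 1-\epsilon$ that make this balance tight, and getting the power of $\zeta$ and the numerical constants ($16$, the $(1-\epsilon)^{2}$, and the $C^*/c^*$ in the $\gamma$-term) to come out cleanly is the delicate calculation — likely where the precise form of Theorem \ref{ThmLawSok} does real work rather than the bare Cheeger reduction. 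A secondary technical point is justifying that the supremum over $z \in \Omega_2$ in \eqref{IneqCor3} may legitimately replace the $\omega$-dependence inside the flow integral; this rests on the product form of $A$ and on $\mathcal{P}_2$ fixing $\theta$, and once it is in hand the remainder is the one-dimensional integral described above.
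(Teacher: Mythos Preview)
Your strategy is exactly the paper's: take $S=(a,m]\times\Omega_2$ with $m$ a half-mass point of $\Pi_1$ restricted to $I$, bound the ergodic flow out of $S$ by the step-size tail, and invoke Cheeger. The width bounds $\tfrac{1-\epsilon}{2C^*}\le m-a\le \tfrac{1-\epsilon}{2c^*}$ and the reduction to the $\theta$-marginal via the product form of $A$ are all as in the paper.

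The one place where your sketch diverges is the reading of \eqref{IneqCor3}. You take the ``$(\theta_{s+1}-\theta_s)^2$'' at face value and set $r=d(\theta)^2/\zeta$, which produces the pointwise bound $\zeta^2/d(\theta)^4$ and a near-boundary strip of width $\sqrt{\zeta}$. That integral is $\mathcal{O}(C^*\sqrt{\zeta})$, not $\mathcal{O}(C^*\zeta)$, so after dividing by the denominator you obtain only $\kappa\lesssim C^*\sqrt{\zeta}/(1-\epsilon)^2$, which does \emph{not} recover the claimed $16C^*\zeta/(1-\epsilon)^2$. In the paper's proof, and in every verification of \eqref{IneqCor3} later in the appendix (for both P\'olya--Gamma and Albert--Chib), the condition is effectively $\mathbb{P}\big[|\theta_{s+1}-\theta_s|>r\zeta\big]\le r^{-2}+\gamma$; the square is a typo. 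With that reading you set $r=d(\theta)/\zeta$, the near-boundary strip has width $\zeta$, the integrand is $\zeta^2/d(\theta)^2$, and the explicit one-dimensional computation
\[
\int_0^{\zeta}1\,du+\int_{\zeta}^{(m-a)/2}\frac{\zeta^2}{u^2}\,du\le 2\zeta
\]
is what manufactures the constant $16$ after multiplying by $2C^*$ (symmetry about the midpoint) and dividing by $\Pi(S)(1-\Pi(S))\ge(1-\epsilon)^2/4$. No extra input from Theorem~\ref{ThmLawSok} is needed beyond the bare inequality $\delta\le\kappa$; the constants come entirely from this integral. Two small normalization remarks: the paper's conductance uses $\Pi(S)(1-\Pi(S))$ rather than $\min\{\Pi(S),\Pi(S^c)\}$, which is where the $(1-\epsilon)^2$ in the denominator arises; and for the $\gamma$-term the paper integrates $C^*\gamma$ over $(a,m)$ and uses $m-a\le(1-\epsilon)/(2c^*)$, which is how the $C^*/c^*$ ratio enters.
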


\begin{proof}
Let $m = \inf \left\{ x > a \, : \, \int_{a}^{x} \mu_{1}(y) dy \geq \frac{\pi_{1}(I)}{2} \right\} \geq a + \frac{1-\epsilon}{2C^*}$ be the median of the restriction of $\Pi_{1}$ to $I$ and let $S = (a,m] \times \Omega_{2}$. By inequality \eqref{IneqCor2}, 
\be 
\frac{1-\epsilon}{2c^*} \geq m-a \geq \frac{1-\epsilon}{2C^*}.
\ee 
We now bound the conductance $\kappa$ by showing an upper bound on $\kappa(S)$
\be
\kappa(S) &= \frac{\int_{(x,y) \in S} \mc P((x,y),S^{c}) \mu(x,y) dx dy }{\Pi(S) (1 - \Pi(S))} \\
&\leq \frac{4}{(1-\epsilon)^{2}} \int_{(x,y) \in S} \mc P((x,y),S^{c}) \mu(x,y) dx dy \\
&\leq \frac{4}{(1-\epsilon)^{2}} \int_{a}^{m} C^* \left[ \min\left( 1,\frac{\zeta^{2}}{\min(x-a, m-x)^{2}} \right) + \gamma \right] dx
\ee
where in the last step we applied \eqref{IneqCor3} with $r \le \max(x-a,m-x) \le (1-\epsilon)/(4c^*)$ on $[a,m]$. Continuing 
\be
\kappa(S) &\le \frac{8 C^*}{(1-\epsilon)^{2}} \left( \int_{0}^{\zeta}(1 + \gamma)dx + \int_{\zeta}^{\frac{m-a}{2}} \left( \frac{\zeta^{2}}{x^{2}} + \gamma \right) dx \right) \\
&= \frac{8 C^*}{(1-\epsilon)^{2}} \left( \zeta + \zeta^{2}\left(\zeta^{-1} - \frac{2}{m-a} \right) + \gamma \frac{m-a}{2} \right) \\
&\leq \frac{16 C^* \zeta }{(1-\epsilon)^{2}} + \frac{8C^* \gamma }{(1-\epsilon)^{2}} \frac{1 -\epsilon}{4c^*} \\
&= \frac{16 C^* \zeta }{(1-\epsilon)^{2}} + \frac{2C^* \gamma}{c^*(1-\epsilon)}.
\ee
The result now follows immediately from an application of Theorem \ref{ThmLawSok}.
\end{proof}

\subsection{Verifying Corollary \ref{CorCondIneq}} \label{sec:verifycor}
We briefly outline the strategy for showing the three conditions in Corollary \ref{CorCondIneq}. To show \eqref{IneqCor1}, the existence of an interval $I(n)$ satisfying $\pi_1(I(n)) \geq 1-\epsilon$, we first find an interval $I(n)$ containing the posterior mode for large enough $n$ on which the posterior density ratio is bounded below by a constant. Then, we find a second interval $I'(n)$ outside of which the posterior integrates to $o(1)$ and that satisfies $I(n) \subset I'(n)$. By lower bounding the width of $I(n)$ and upper bounding the width of $I'(n)$, we obtain a lower bound on $\pi_1(I(n))$ and bounds on $c^*(n)$ and $C^*(n)$, sequences of constants corresponding to $C^*$ and $c^*$ in \eqref{IneqCor2}. To show \eqref{IneqCor3}, we study the dynamics of the chain on $I(n)$ and use concentration inequalities.

\subsection{Proof of \eqref{IneqPgcondMain} in Theorem \ref{thm:pgcond}}
We prove \eqref{IneqPgcondMain} with an application of  Corollary \ref{CorCondIneq}. \eqref{IneqAccondMain} is proved in the Supplement. The proof consists of verifying the three conditions given by inequalities \eqref{IneqCor1}, \eqref{IneqCor2}, and \eqref{IneqCor3}. The proof proceeds in three parts:
\begin{enumerate}[(a)]
\item Showing an interval $I(n)$ on which the posterior density ratio is bounded by a constant and lower bounding its width;
\item Showing an interval $I'(n) \supset I(n)$ outside of which the posterior integrates to $o(1)$ and upper bounding its width; and,
\item Showing a concentration result of the form \eqref{IneqCor3} on an interval containing $I(n)$. 
\end{enumerate}

\noindent \textbf{Part (a) : showing the posterior is almost constant on an interval $I(n)$ containing the mode and lower bounding its width.} First, we provide bounds of the form \eqref{IneqCor1} and \eqref{IneqCor2}. Recall that the posterior density of $\theta$ is 
\be 
p(\theta | y = 1) = \frac{n}{(2 \pi)^{1/2} B} (1 + e^{\theta})^{-n} e^{\theta} e^{-\frac{\theta^{2}}{2 B}}.
\ee 

We begin by showing that $p(\theta | y = 1)$ is near-constant on a small region around the mode $\widehat{\theta} \equiv \mathrm{argmax}_{\theta} p(\theta | y = 1$) of width $\Omega( (\log n)^{-1})$ given by $I(n) = [\widehat{\theta}-(\log(n))^{-1},\widehat{\theta}+(\log(n))^{-1}]$. Recall from Inequality \eqref{IneqThetaMax} 
\be 
\widehat{\theta} = \hat{\theta}_{n} = - \log(n) + \mathcal{O}(\log(\log(n))).
\ee 

Therefore, there exists an $A< \infty$ such that $\widehat{\theta} \in [-\log(n) - A\log(\log n),-\log(n) + A\log(\log n)]$ for all $n>N_0$, where $N_0$ depends only on $A$.

Consider pairs $\theta_{1}, \theta_{2}$ that satisfy $|\theta_{1} - \theta_{2}|  \leq (\log n)^{-1}$ and also $|\theta_{1} + \log(n)|, \, |\theta_{2} + \log(n) | \leq A \log(\log n )$. Define $\zeta_{1}, \zeta_{2}$ by $\theta_{1} = - \log(n) + \zeta_{1}$, $\theta_{2} = - \log(n) + \zeta_{2}$. Then we calculate 
\be \label{IneqNearConstBd}
\frac{p(\theta_{1} |  y = 1)}{p(\theta_{2} |  y = 1)} &= e^{\theta_{1}-\theta_{2}} \left( \frac{1 + e^{\theta_{2}}}{1 + e^{\theta_{1}}} \right)^{n} e^{\frac{1}{2B}(\theta_{2}^{2} - \theta_{1}^{2})} \\
&= e^{\zeta_{1}-\zeta_{2}} \left( \frac{1 + \frac{1}{n} e^{\zeta_{2}}}{1 + \frac{1}{n} e^{\zeta_{1}}} \right)^{n} e^{\frac{1}{2B}(\zeta_{1}-\zeta_{2})(2 \log(n) - \zeta_{1} - \zeta_{2})} \\
&\geq (e^{-2})(2e)^{-2A}(e^{-2/B}).
\ee 
Since this holds for any pair of points satisfying $|\theta_1-\theta_2| \leq (\log n)^{-1}$ inside the interval $-\log(n) \pm A\log(\log n)$, and $\widehat{\theta}$ is inside this interval for $n > N_0$, we conclude the posterior density ratio is bounded below by a constant on an interval $I(n)$ of width $\Omega((\log n)^{-1})$ centered at $\widehat{\theta}$ for all $n > N_0$. Since the posterior density must integrate to 1, this shows that $\mu_1(\widehat{\theta}) = \bigO{\log n}$ in \eqref{IneqCor2}, so we can take $C^*(n)=\bigO{\log n}$ in \eqref{IneqCor2}.

\textbf{Part (b): showing the posterior is negligible outside an interval $I'(n) \supset I(n)$ and upper bounding its width}. Next, we show that $p(\theta | y = 1)$ is negligible outside of the interval $I'(n)=(-5 \log(n), 3 \log(n))$. This interval clearly contains $I(n)$ for all large $n$, since $I(n)$ is an interval of width $\bigO{\log(\log n)}$ containing $-\log(n)$. If $\theta = -\log(n) + C \log(n)$ for some $C \geq 4$,
\be 
p(\theta | y = 1) &\leq \frac{n}{(2 \pi)^{1/2} B} n^{C-1} (1 + n^{C-1})^{-n} e^{- \frac{(C-1)^{2} (\log(n))^{2}}{2B}} \\
&\leq \frac{1}{(2 \pi)^{1/2}B} n^{C - n(C-1) - \frac{(C-1)^{2}}{2B} \log(n)}.
\ee 
Thus,
\be \label{IneqNegligibleCBig}
\int_{3 \log(n)}^{\infty} p(\theta | y = 1) d \theta &\leq \sum_{C=4}^{\infty}  \frac{\log(n)}{(2 \pi)^{1/2}B} n^{C - n(C-1) - \frac{(C-1)^{2}}{2B} \log(n)} = o(1).
\ee 
If $\theta = -\log(n) - C \log(n)$ for some $C \geq 4$, then
\be 
p(\theta | y = 1) &\leq \frac{n}{(2 \pi)^{1/2} B} n^{-C-1} (1 + n^{-C-1})^{-n} e^{- \frac{(C+1)^{2} (\log(n))^{2}}{2B}} \\
&\leq \frac{2}{(2 \pi)^{1/2}B} n^{-C  - \frac{(C+1)^{2}}{2B} \log(n)}.
\ee 
Thus,
\be \label{IneqNegligibleCSmall}
\int_{- \infty}^{-5 \log(n)} p(\theta | y = 1) d \theta &\leq \sum_{C=4}^{\infty} \frac{2 \log(n)}{(2 \pi)^{1/2}B} n^{-C  - \frac{(C+1)^{2}}{2B} \log(n)} = o(1).
\ee 
Combining inequalities \eqref{IneqNegligibleCBig} and \eqref{IneqNegligibleCSmall} gives
\be \label{IneqNegligibleGeneral}
\int_{\{I'(n)\}^{c}} p(\theta | y = 1) d \theta = o(1).
\ee 
Therefore, since the posterior is negligible outside a region of width $\bigO{\log n}$, and the density is unimodal and smooth, we can take $c=\Omega((\log n)^{-1})$ in \eqref{IneqCor2}. This also shows that $\pi_1(I(n)) = \Omega((\log n)^{-2})$, so we can take $1-\epsilon(n) = \Omega((\log n)^{-2})$ in \eqref{IneqCor1}.

\noindent \textbf{Part (c): Showing \eqref{IneqCor3} on an interval containing the mode.} Fix a constant $0<C<1$ and consider the interval 
\be
I^*(n) = [-\log(n)(1+C),-\log(n)(1-C)]. \label{eq:concintervallogit}
\ee
This interval contains $\widehat{\theta} \in -\log(n) \pm \bigO{\log( \log n)}$ for sufficiently large $n$. We will show \eqref{IneqCor3} on $I^*(n)$. We can write values of $\theta_t$ inside this interval as $\theta_{t} = -\log(n)(1 + a_{t})$ for $|a_{t}| \leq C$. Recall that we are considering the P\'{o}lya-Gamma sampler with an update rule consisting of sampling $\omega_{t+1} \mid \theta_t, n$ and then sampling $\theta_{t+1} \mid \omega_{t+1}, y, n$. We first obtain bounds on the conditional expectation and variance of $\omega_{t+1} \mid \theta_t, n$ for $\theta_t$ inside of $I^*(n)$, which will be used to show concentration. We have

\be 
E( \omega_{t+1} \mid \theta_t, n) &= \frac{n}{2 \theta_{t}} \mathrm{tanh}\left(\theta_{t}/2\right) \\
&= \frac{n}{ -2 \log(n)(1 + a_{t})} \frac{1 - e^{\log(n) (1 + a_{t})}}{1 +  e^{\log(n) (1 + a_{t})}} \\
&= \frac{n}{ -2 \log(n)(1 + a_{t})}  \frac{1 - n^{1 + a_{t}}}{1+n^{1+ a_{t}}} \\
&= \frac{n}{ 2 \log(n)(1 + a_{t})}\left[ 1 - 2n^{-1-a_{t}}(1- o(1)) \right] \label{CalcExpWNext}
\ee 
and 
\be 
\mathrm{var}(\omega_{t+1} \mid \theta_t, n) &= \frac{n}{4 \theta_{t}^{3}} (\mathrm{sinh}(\theta_{t}) - \theta_{t}) \mathrm{sech}^{2}\left(\frac{\theta_{t}}{2} \right) \\
&= \frac{-n}{4 (1 + a_{t})^{3} \log(n)^{3}} \left[ \frac{1 - e^{2 (1 + a_{t}) \log(n)}}{2 e^{(1 + a_{t}) \log(n)}} + (1 + a_{t}) \log(n)\right] \\
&\times \left[ \frac{2 e^{\frac{1}{2} (1 + a_{t}) \log(n)}}{ 1 + e^{(1+ a_{t}) \log(n)}}  \right]^{2} \\
&= \frac{n}{4 (1 + a_{t})^{3} \log(n)^{3}} \left[ \frac{1}{2} n^{1 + a_{t}}(1 + o(1)) \right] \left[ \frac{4}{n^{1 + a_{t}}}( 1 + o(1)) \right] \\
&= \frac{n}{2 (1 + a_{t})^{3} \log(n)^{3}}( 1 + o(1)). \label{CalcVarWNext}
\ee 

Define $\zeta(n) = n^{1/2} (\log n)^{-1.5}$. Combining \eqref{CalcExpWNext} and \eqref{CalcVarWNext}, we have by Chebyshev's inequality that
\be \label{EqOneStepW}
\pr\left( \left|\omega_{t+1} - \frac{n}{2 \log(n) (1 + a_{t})} \right| > r \frac{n^{1/2}}{\log(n)^{1.5}} \,\bigg|\, \theta_t  \right) = \bigO{r^{-2}}
\ee 
for any $r > 0$, $\theta_t \in I^*(n)$. Next, we bound $|\theta_{t+1}-\theta_t|$ for $\theta_t \in I^*(n)$. Recall  
\be 
\theta_{t+1}|\omega_{t+1} \sim \No{ \sigma_{\omega_{t+1}}^{-1} (y-n/2)}{\sigma_{\omega_{t+1}}^{-1} }, \quad  \sigma_{\omega_{t+1}}^{-1} = (\omega_{t+1} +  B^{-1})^{-1}.
\ee

Define $r_{t}$ by $\omega_{t+1} = n(2 \log(n) (1 + a_{t}))^{-1} + r_{t} n^{1/2} \log(n)^{-1.5}$. In the following, we condition on $r_{t} (4 \log(n))^{1/2}n^{-1/2} \leq 1/8$ and $4 B^{-1} \log(n) n^{-1}  \leq 1/8$ to obtain concentration results. Clearly, the second condition holds for fixed $0<B < \infty$ for all sufficiently large $n$. To show that the second condition holds for the relevant values of $r_t$, recall that we need to show \eqref{IneqCor3} only for $1 \le r \le (1-\epsilon)/(4c^*)$. Since $1-\epsilon(n) \le 1$ and $c^*(n) = \Omega((\log n)^{-1})$, $r \le (1-\epsilon(n))/(4c^*(n))$ gives $r = \bigO{\log(n)}$, so $r_{t} (4 \log(n))^{1/2}n^{-1/2} = o(1)$, as required. 

Conditional on $r_{t} (4 \log(n))^{1/2}n^{-1/2} \leq 1/8$ and $4 B^{-1} \log(n) n^{-1}  \leq 1/8$, we have 
\be 
\sigma_{\omega_{t+1}}^{-1} &= \left[ \frac{n}{2 \log(n) (1 + a_{t})} + r_{t} \frac{n^{1/2}}{(\log n)^{1.5}} +  B^{-1} \right]^{-1} \\
&= \frac{2 \log(n) (1 + a_{t})}{n} \left[1 + B^{-1} \frac{2 \log(n) (1 + a_{t})}{n} + r_{t} \frac{2  (1 + a_{t})}{(n \log n)^{1/2}}\right]^{-1} \\
&=\frac{2 \log(n) (1 + a_{t})}{n} \left[ 1 -\bigO{B^{-1} \frac{2 \log(n) (1 + a_{t})}{n} + r_{t} \frac{2  (1 + a_{t})}{(n \log n)^{1/2}} } \right] \\
&=\frac{2 \log(n) (1 + a_{t})}{n}\left[ 1 - \bigO{ \frac{r_{t} + 1}{(n \log n)^{1/2}}} \right].
\ee 

Thus, still conditional on $r_{t} (4 \log n)^{1/2} n^{-1/2} \leq 1/8$ and $4 B^{-1} \log(n) n^{-1}  \leq 1/8$, 
\be 
\theta_{t+1}|\omega_{t+1} \sim &\No{ \sigma_{\omega_{t+1}}^{-1} (y-n/2)}{  \sigma_{\omega_{t+1}}^{-1} } \\
= &\Nor \bigg(  (2-n) \frac{ \log(n) (1 + a_{t})}{n}\left[1 + \bigO{\frac{r_{t} + 1}{(n \log n)^{1/2}}} \right], \frac{ \log(n) (1 + a_{t})}{n}\left[1 + \bigO{\frac{r_{t} + 1}{(n \log n)^{1/2}}}\right] \bigg) \\
= &\Nor \bigg(  - \log(n) (1 + a_{t})\left[ 1 + \bigO{\frac{r_{t} + 1}{(n \log n)^{1/2}}}\right], \frac{ \log(n) (1 + a_{t})}{n}\left[ 1 + \bigO{\frac{r_{t} + 1}{(n \log n)^{1/2}}}\right] \bigg) \\
= &\No{ \theta_{t}\left[1 + \bigO{\frac{r_{t} + 1}{(n \log n)^{1/2}}}\right]}{ \frac{ \log(n) (1 + a_{t})}{n}\left[1 + \bigO{\frac{r_{t} + 1}{(n \log n)^{1/2}}} \right]}.
\ee 

Applying this bound along with Chebyshev's inequality to the second term, and applying inequality \eqref{EqOneStepW} to the first term, we conclude that
\be \label{IneqSimpleConclusion}
\pr \left[ \left| \theta_{t+1} - \theta_{t} \right| > \frac{2r \log n}{n} \right] &\leq \pr \left[ \left|\omega_{t+1} - \frac{n}{2 \log(n) (1 + a_{t})} \right| >  \frac{r n^{1/2}}{(\log n)^{1.5}} \right]  \\
&+ \pr \bigg[ | \theta_{t+1} - \theta_{t} | > 2r \left(\frac{\log n}{n}\right)^{1/2} \quad \bigg| \\
&\left|\omega_{t+1} - \frac{n}{2 \log(n) (1 + a_{t})} \right| \leq r \frac{n^{1/2}}{(\log n)^{1.5}} \bigg] \\
&= \bigO{r^{-2}} + \bigO{\left(\frac{\log n }{n}\right)^{1/2}}.
\ee 

Thus, inequality \eqref{IneqCor3} is satisfied for two sequences of constants $\zeta = \zeta(n)$ and $\gamma = \gamma(n)$ that satisfy
\be \label{IneqEquationsSatisfiedOne}
\zeta(n) = \bigO{\left(\frac{\log n}{n}\right)^{1/2}}, \, \, \gamma(n) =  \bigO{\left(\frac{\log n}{n}\right)^{1/2}}
\ee
on any sequence of sets $I^* = I^*(n)$ satisfying $I^*(n) \subset (-\log(n)(1 + C), -\log(n)(1 - C))$ and fixed $0 < C < 1$.

By inequalities \eqref{IneqNearConstBd} and \eqref{IneqNegligibleGeneral}, the Inequalities \eqref{IneqCor1} and \eqref{IneqCor2} are satisfied with $\epsilon = \epsilon(n)$, $c = c(n)$ and $C = C(n)$ satisfying
\be \label{IneqEquationsSatisfiedTwo}
(1-\epsilon(n))^{-1} = \bigO{(\log n)^{2}}, \, \, c^*(n) = \Omega( (\log n)^{-1}), \, \, C^*(n) = \bigO{\log n}
\ee 
and a set $I(n) \subset \left(-\log(n) - (\log n)^{-1} - \eta_{n}, -\log(n) + (\log n)^{-1} - \eta_{n} \right)$, where by Inequality \eqref{IneqThetaMax} we have $\eta_{n} = \bigO{\log(\log n)}$. Combining this with \eqref{IneqEquationsSatisfiedOne}, Corollary \ref{CorCondIneq}
 completes the proof of Equation \eqref{IneqPgcondMain}.

Finally, Equality \eqref{IneqPgcondWarm} follows immediately from  inequalities \eqref{IneqNearConstBd} and \eqref{IneqNegligibleGeneral}. This completes the proof of the Theorem.
\end{appendix}

\section{Adaptive hybrid Metropolis algorithm}
We give the full algorithm we use for the model in \eqref{eq:HierModel}.
\begin{enumerate}
 \item Update $\theta_i \mid \theta_0, \sigma, y, n$ for $i=1,\ldots,N$ independently in parallel Metropolis steps using the adaptive proposal outlined in \eqref{eq:AdaptiveQ}.
 \item Update $\theta_0 \mid \theta_1,\ldots,\theta_N, \sigma, y, n$ from
 \be
 \Nor( s m, s), \quad s = \left( \frac{N}{\sigma^2} + \frac{1}{B} \right)^{-1}, \quad m = \frac{\sum_{i=1}^N \theta_i}{\sigma^2} + \frac{b}{B}.
 \ee
 \item Update $\sigma \mid \theta_0,\theta_1,\ldots,\theta_N, y,n$ using slice sampling as in \cite[Supplement]{polson2014bayesian}. Specifically, put $\eta = \sigma^{-2}$, then sample 
\be
u \sim \text{Uniform}\left(0,\frac{1}{\eta+1} \right),
\ee
then sample $\eta$ from an exponential distribution with scale
\be
\frac{\sum_{i=1}^N (\theta_i-\theta_0)^2}{2}
\ee
truncated to the interval $\left( 0,\frac{1-u}{u} \right)$. Now put $\sigma^2 = \eta^{-1}$, giving a new sample of $\sigma^2$.
\end{enumerate}

\beginsupplement
\section*{Supplementary Material}

\section{Computational efficiency of MCMC}
This section provides a more detailed introduction to Markov chain concepts relevant to our study of computational complexity and the results in Section \ref{sec:theory}.

Let $\{\Theta_k\}$ be a Markov chain with transition kernel $\mc P$ and target measure $\Pi$. 
For $f : \mc T \to \bb R$, the expectation $\Pi f$  is usually estimated by the time average 
\be \label{eq:ergavg}
\widehat{f}_T = \frac{1}{T} \sum_{k=0}^{T-1} f(\Theta_k).
\ee
For $\Theta_1 \sim \mu$, the mean squared error of $\widehat{f}_T$ is
\be \label{eq:msemcmc}
\Delta(\widehat{f}_T,\Pi f) &\equiv \bb E\Big[ \big( \widehat{f}_T - \Pi f \big)^2 \Big] \\
&= \big( \bb E [ \widehat{f}_T ] - \Pi f \big)^2  + \bb E\bigg[ \bigg(\widehat{f}_T  - \frac{1}{T} \sum_{k=0}^{T-1} \mu \mc P^{k-1} f \bigg)^2\bigg].
\ee
The right side of \eqref{eq:msemcmc} is analogous to a bias-variance decomposition, with
\be \label{eq:biasvar}
\var[ \widehat{f}_T ] &= \bb E \bigg[ \bigg( \frac{1}{T} \sum_{k=0}^{T-1} f(\Theta_k)- \frac{1}{T} \sum_{k=0}^{T-1} \mc \mu P^{k-1} f \bigg)^2 \bigg], \\
\mathrm{bias}(\widehat{f}_T) &= \bb E [ \widehat{f}_T ] - \Pi f.
\ee
Under fairly general conditions, $\Delta(\widehat{f}_T,\Pi f)$ decreases at the rate $T^{-1}$, but the implied multiplicative constant may be enormous and is intimately related to the convergence properties of $\mc P$. Further, this constant often increases with sample size, so that for large samples, huge MCMC path lengths are necessary to achieve acceptable error. 



\subsection{Spectral gap, conductance, and approximation error}
The $L^2(\Pi)$ spectral gap (henceforth ``spectral gap'') of a Markov operator $\mc P$ is defined as
\begin{definition} [Spectral Gap]
Let $\mc P(\theta;\cdot)$ be the transition kernel of a Markov chain with unique stationary distribution $\Pi$. The \textit{spectrum} of $\mc P$ is 
\be 
S = \{ \lambda \in \mathbb{C} \backslash \{0\} \, : \, (\lambda I - \mc P)^{-1} \text{ is not a bounded linear operator on } L^{2}(\Pi) \},
\ee
where $L^2(\Pi)$ is the space of $\Pi$-square integrable functions. The \textit{spectral gap} of $\mc P $ is given by 
\be 
\delta( \mc P) = 1 - \sup \{ |\lambda| \, : \, \lambda \in S, \, \lambda \neq 1 \}
\ee  
when the eigenvalue 1 has multiplicity 1, and $\delta(\mc P) = 0$ otherwise.
\end{definition}
When $\mc P$ is reversible, one can obtain both finite-time and asymptotic bounds on $\Delta(\widehat{f}_T,\Pi f)$ in terms of $\delta(\mc P)$. The following result from \citet[Theorem 3.41]{rudolf2011explicit} gives a finite-time bound\footnote{\cite{rudolf2011explicit} actually gives a slightly sharper version of this bound, where in the first two terms the spectral gap is replaced by $1-\sup\{\lambda : \lambda \in S, \lambda \ne 1 \}$, but for simplicity we give the bound in terms of spectral gap.}
\begin{theorem} \label{thm:ErrorBoundRudolf}
Let $\mc P$ be a reversible Markov kernel. Suppose $f \in L^p(\Pi)$ for $p \in (2,\infty]$ and $\| \frac{d\nu}{d\Pi} \|_{p/(p-2) \vee 2} < \infty$ for $\nu$ a probability measure on $\Theta$ and $\| \cdot \|_q$ the $L^q(\Pi)$ norm. Then
 \be \label{eq:ErrorBoundRudolf}
 \frac{1}{\|f\|_2} \Delta(\widehat{f}_T,\Pi f) \le \frac{2-\delta}{T \delta} - \frac{2 \bar{\delta} (1-\bar{\delta}^T)}{T^2 \delta^2} + \frac{\|f\|_p^2}{\|f\|_2} \frac{64 p}{T^2 (p-2) \delta^2} \left\| \frac{d \nu}{d \Pi} -1 \right\|_{p/(p-2) \vee 2},
 \ee
 where $\bar{\delta} = 1-\delta$.
\end{theorem}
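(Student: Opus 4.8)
The plan is to exploit the self-adjointness of $\mc P$ on $L^{2}(\Pi)$ together with the spectral theorem, isolating the effect of the non-stationary start $\nu$ through a change of measure. First I would center: since replacing $f$ by $f - \Pi f$ changes neither $\widehat f_{T} - \Pi f$ nor the spectral content away from the eigenvalue $1$, I may assume $\Pi f = 0$, so that $\Delta(\widehat f_{T}, \Pi f) = \bb E_{\nu}[\widehat f_{T}^{2}]$. Writing $d\nu/d\Pi = 1 + h$ with $\Pi h = 0$, and using that the path law under $\nu$ is obtained from the path law under $\Pi$ by multiplying by $(d\nu/d\Pi)(\Theta_{0})$, I would split
\[
\bb E_{\nu}[\widehat f_{T}^{2}] = \bb E_{\Pi}[\widehat f_{T}^{2}] + \bb E_{\Pi}\!\left[ h(\Theta_{0})\, \widehat f_{T}^{2}\right],
\]
where the stationary term will produce the first two terms of the bound and the correction term the third.

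For the stationary term, expanding the square and using reversibility yields $\bb E_{\Pi}[\widehat f_{T}^{2}] = T^{-2}\sum_{j,k=0}^{T-1}\langle f, \mc P^{|j-k|} f\rangle_{\Pi}$. Introducing the spectral resolution $\mc P = \int \lambda\, E(d\lambda)$ and the positive spectral measure $\mu_{f}(\cdot) = \langle f, E(\cdot) f\rangle$ of total mass $\|f\|_{2}^{2}$, this equals $T^{-2}\int (\sum_{j,k}\lambda^{|j-k|})\, \mu_{f}(d\lambda)$. The inner double sum has a closed form in $\lambda$; because $\Pi f = 0$ removes the eigenvalue $1$, the spectral gap confines $\mu_{f}$ to $|\lambda| \le \bar\delta$, and the closed-form expression is maximized over that range at $\lambda = \bar\delta$ (alternating signs make the negative part smaller). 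Evaluating at $\bar\delta$ and collecting terms reproduces $\|f\|_{2}^{2}$ times $\frac{2-\delta}{T\delta} - \frac{2\bar\delta(1-\bar\delta^{T})}{T^{2}\delta^{2}}$, which, after the normalization by $\|f\|_{2}$ on the left, gives the first two terms.

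For the correction term I would expand $\bb E_{\Pi}[h(\Theta_{0})\widehat f_{T}^{2}] = T^{-2}\sum_{j \le k}(2 - \mathbbm{1}_{\{j=k\}})\, \bb E_{\Pi}[h(\Theta_{0}) f(\Theta_{j}) f(\Theta_{k})]$ and rewrite each summand, via the Markov property, as $\langle h, \mc P^{j}(f\cdot \mc P^{k-j} f)\rangle_{\Pi}$. Since $\Pi h = 0$, I may subtract the mean of the bracketed function, after which $\mc P^{j}$ acts on a centered function and contracts. Applying H\"older's inequality with the conjugate pair $(p/2,\, p/(p-2))$ peels off $\|h\|_{p/(p-2)\vee 2}$ and leaves $\|f\cdot \mc P^{k-j}f\|_{p/2} \le \|f\|_{p}\,\|\mc P^{k-j}f\|_{p}$. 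To convert the $L^{2}$ spectral-gap contraction into geometric decay of $\mc P^{m}$ on centered functions in $L^{p}$ (and $L^{p/2}$), I would interpolate, by Riesz--Thorin, the $L^{2}$ rate $\bar\delta$ against the trivial $L^{1}$/$L^{\infty}$ contraction $\|\mc P\| = 1$. Summing the resulting doubly-geometric series, in $j$ and in $k-j$, supplies the two powers of $\delta^{-1}$, while the interpolation constants assemble into the factor $64 p/(p-2)$, giving the third term.

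The main obstacle is the correction term, and specifically the sharp $L^{p}$ contraction estimate: converting the $L^{2}(\Pi)$ spectral gap into an explicit geometric $L^{p}$-decay rate for $\mc P^{m}$ on mean-zero functions, and then carrying that constant cleanly through the H\"older split and the double summation so that the powers of $\delta$ and $T$, together with the numerical constant $64p/(p-2)$, emerge exactly. By comparison the stationary term is routine: a spectral-measure identity followed by an elementary maximization of the closed-form geometric sum over $|\lambda| \le \bar\delta$.
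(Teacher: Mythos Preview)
The paper does not supply its own proof of this statement: it is quoted directly as Theorem~3.41 of \cite{rudolf2011explicit} and stated without argument. There is therefore no in-paper proof against which to compare your proposal.

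Your outline is nonetheless structurally in line with how Rudolf develops these bounds. The decomposition via $d\nu/d\Pi = 1 + h$ into a stationary variance plus a non-stationary correction, the spectral-measure evaluation of the stationary part (which yields the first two terms exactly upon maximizing the closed-form geometric sum over $|\lambda|\le \bar\delta$), and the control of the correction term through H\"older combined with an $L^p$--$L^2$ interpolation for $\mathcal P^m$ on mean-zero functions, are the right ingredients. The genuinely delicate step, as you yourself flag, is the interpolation: $\mathcal P$ is a contraction on every $L^q$, but the spectral-gap rate lives only in $L^2$, and transferring it to $L^p$ and $L^{p/2}$ so as to extract geometric decay in both $j$ and $k-j$, with constants assembling to exactly $64p/(p-2)$, requires careful bookkeeping (in particular, $\mathcal P - \Pi$ is not itself an $L^\infty$ contraction, so the endpoint of the interpolation has to be set up with some care). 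This is not a gap in your plan so much as the place where essentially all of the work is; since the paper simply cites the result, there is nothing further to compare.
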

\noindent As a function of $T$, \eqref{eq:ErrorBoundRudolf} is order $T^{-1}$, while as a function of $\delta$ for finite $T$, it is order $\delta^{-2}$. Taking $T \to \infty$ in \eqref{eq:ErrorBoundRudolf}, we obtain the well-known asymptotic bound
\be \label{eq:ErrorBoundAsymp}
\frac{1}{\|f\|_2} \Delta(\widehat{f}_T,\Pi f) \lesssim \frac{2-\delta}{T \delta} 
\ee
Thus, for large $T$, $T \Delta(\wh f_T, \Pi f)$ behaves like $\delta^{-1}$. It is worth noting that the condition on the Radon-Nikodym derivative $\| d\nu/d\Pi \|_{p/(p-2) \vee 2} < \infty$ is similar to the condition $M < \infty$ in Theorem \ref{ThmBiasMcmc}, and the ``warm start'' distributions $\nu$ that we construct for the kernels considered here satisfy $\| d \nu/d\Pi \|_2 < \infty$, so a bound of the form \eqref{eq:ErrorBoundRudolf} holds for $p \ge 4$ for the cases under study.

One can also obtain a general central limit theorem, which we review here because of its role in MCMC performance diagnostics. The following result from \citet[Corollary 4]{jones2004markov} is valid for the Markov kernels considered here, all of which are geometrically ergodic and reversible. A Markov chain evolving according to $\mc P$ is geometrically ergodic if there exist constants $\rho \in (0,1), B < \infty$ and a $\Pi$-almost everywhere finite measurable function $V : \mc T \to [1,\infty)$ such that
\be
\| \mc P^k(\theta_0;\cdot) - \Pi \|_V \le BV(\theta_0) \rho^k,
\ee
where for a probability measure $\mu$, $\|\mu\|_V = \sup_{f \le V} |\mu f|$. 
For a function $f \in L^2(\Pi)$, define the quantity
\be \label{EqLimitingVariance}
\sigma_{f}^{2} = \mathrm{var}[f(\Theta_0)] + 2 \sum_{k=1}^{\infty} \mathrm{cov}[f(\Theta_0),f(\Theta_k)]
\ee 
for $\Theta_0 \sim \Pi$. Then, if $\mc P$ is reversible we have
\be 
\lim_{T \rightarrow \infty} T^{1/2}\left( \widehat{f}_T - \Pi f \right) \stackrel{d}{=} \Nor(0,\sigma_{f}^{2}), \label{eq:markovclt}
\ee 
for any initial distribution $\mu$ on $\Theta_0$. 
The asymptotic variance $\sigma^2_f$ is related to the spectral gap by the inequality (c.f. page 479 of \cite{geyer1992practical}) 
\be
\frac{\sigma_{f}^{2}}{\mathrm{var}_{\Pi}(f)} &\equiv 1 + 2 \sum_{k=1}^{\infty} \eta_{k}(f) \le \frac{2}{\delta(\mc P)}-1, \label{eq:corvar}
\ee
which is identical to \eqref{eq:ErrorBoundAsymp}. The quantity $2 \sum_{k=1}^{\infty} \eta_k(f)$ is referred to as the integrated autocorrelation time.  Here, $\eta_k(f)$ is the lag-$k$ autocorrelation $\cor{f(\Theta_0)}{f(\Theta_k)}$ with $\{\Theta_k\}$ evolving according to $\mc P$. A commonly used performance measure for MCMC is the \emph{effective sample size} $\Teff$:
\be
\Teff &\equiv \frac{\mathrm{var}_{\Pi}(f) T}{\sigma^2_f}, \label{eq:teff} 
\ee 
which measures how much the asymptotic variance is inflated by autocorrelation.
The bound in \eqref{eq:corvar} is sharp for worst case functions when $\mc P$ has no residual spectrum, which holds, for example, for reversible Markov operators on discrete state spaces.  So to a first approximation, the effective sample size is proportional to $\delta(\mc P)$. Since $\Teff$ is an asymptotic quantity, it is common to approximate it from finite-length sample paths using finite-time estimates of the spectral density at frequency zero. Taken together, \eqref{eq:markovclt} and \eqref{eq:ErrorBoundRudolf} indicate that $\Delta(\wh f_T, \Pi f)$ is proportional to $\delta(\mc P)^{-2}$ for finite $T$, and $\delta(\mc P)^{-1}$ asymptotically (as $T \to \infty$). We will generally refer to the asymptotic setting when discussing results.

The bias of finite-length Markov chains can be bounded in terms of the \emph{conductance} of the associated $\mc P$, which also provides a double-sided bound on $\delta$.
\begin{definition} [Conductance] \label{def:conductance}
For $\Pi$-measurable sets $S \subset \Theta$ with $0 < \Pi(S) < 1$, define
\begin{align*}
\kappa(S) = \frac{\int_{\theta \in S} \mc P(\theta,S^{c}) \Pi(ds)}{\Pi(S) (1 - \Pi(S))} 
\end{align*}
and the \emph{Cheeger constant} or \emph{conductance}
\begin{align*}
\kappa = \inf_{0 <\Pi(S) < 1} \kappa(S).
\end{align*}
\end{definition}
The relationship between conductance and bias for Markov operators is quantified by the following Theorem from \citet{lovasz1993random}:
\begin{theorem} [Warm Start Bound] \label{ThmBiasMcmc}
Let $\mc P(\theta;\cdot)$ be the transition kernel of a Markov chain $\{\Theta_j\}_{j \in \mathbb{N}}$ with invariant measure $\Pi$ and conductance $\kappa$. Then for all measurable sets $S \subset \mc T$, 
\begin{align}
| \pr(\Theta_{j+1} \in S) - \Pi(S)| \leq M^{1/2} \left(1 - \frac{\kappa^{2}}{2} \right)^j,  \label{eq:mcmcbias}
\end{align}
where $M = \sup_{A \subset \Theta} \frac{\pr(\Theta_0 \in A)}{\Pi(A)}$. 
\end{theorem}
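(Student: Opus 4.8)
The plan is to reprove this classical warm-start bound of Lov\'{a}sz and Simonovits by tracking how one application of $\mc P$ ``flattens'' the density of the current law relative to $\Pi$. First I would reduce the two-sided bound to a total variation bound: writing $\mu_t$ for the law of $\Theta_t$, we have $\sup_{S}\{\pr(\Theta_{j+1}\in S) - \Pi(S)\} = \tv{\mu_{j+1}}{\Pi}$, and replacing $S$ by $S^{c}$ flips the sign, so it suffices to bound $\tv{\mu_{j+1}}{\Pi}$ uniformly. Writing $g_t = d\mu_t/d\Pi$ (the warm-start hypothesis is exactly $\|g_0\|_{\infty} = M$), I would introduce the Lov\'{a}sz--Simonovits profile
\be
h_t(x) = \sup\{ \mu_t(S) : \Pi(S) = x \}, \qquad x \in [0,1],
\ee
whose supremum is attained on super-level sets $\{g_t \ge \lambda\}$, and which is concave, nondecreasing, and satisfies $h_t(0)=0$, $h_t(1)=1$ and $\tv{\mu_t}{\Pi} = \sup_{x}(h_t(x) - x)$.

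Second, I would establish the base case and the key one-step recursion. The base case follows from Cauchy--Schwarz: $\int_S g_0^2\, d\Pi \le M \int_S g_0\, d\Pi \le M$, so $h_0(x) \le \sqrt{Mx}$ (and symmetrically near $x=1$), giving an envelope $h_0(x) - x \lesssim \sqrt{M}\,\sqrt{\min(x,1-x)}$. The crux is the contraction lemma: for the super-level set $S$ of mass $x$ realizing $h_{t+1}(x)$, reversibility lets me pair the ergodic flow out of $S$ with the flow into $S^{c}$, and Definition \ref{def:conductance} bounds this flow below by $\kappa\,\Pi(S)(1-\Pi(S))$, which yields
\be
h_{t+1}(x) \le \tfrac12\, h_t\big(x - \kappa_x\big) + \tfrac12\, h_t\big(x + \kappa_x\big), \qquad \kappa_x = \kappa\,\min(x,1-x).
\ee

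Third, I would propagate a geometric envelope by induction. Positing $h_t(x) - x \le \sqrt{M}\,(1-\kappa^2/2)^t\,\sqrt{\min(x,1-x)}$, I would insert the recursion, use concavity of $\sqrt{\cdot}$ together with a second-order Taylor expansion of the envelope $\sqrt{\min(x,1-x)}$, and check that the induced contraction factor is exactly $1 - \kappa^2/2$; taking the supremum over $x$ then gives $\tv{\mu_t}{\Pi} \le \sqrt{M}(1-\kappa^2/2)^t$, and a one-step index shift recovers the exponent $j$ for $\Theta_{j+1}$. A shorter but less self-contained alternative, valid because every kernel here is reversible, is the spectral route: self-adjointness of $\mc P$ on $L^2(\Pi)$ gives $\| g_t - 1 \|_{L^2(\Pi)} \le (1-\delta)^t \| g_0 - 1 \|_{L^2(\Pi)}$, the warm start gives $\| g_0 - 1 \|_{L^2(\Pi)}^2 \le M-1$, and $\tv{\mu_t}{\Pi} \le \tfrac12 \| g_t - 1 \|_{L^2(\Pi)}$ combined with the Cheeger bound $\delta \ge \kappa^2/2$ reproduces the claim. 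I expect the main obstacle to be the contraction lemma and the attendant constant bookkeeping: obtaining the sharp factor $1-\kappa^2/2$ (rather than the weaker $1-\kappa^2/8$ implied by \eqref{eq:KappaDelta}) hinges on exploiting reversibility to pair in- and out-flow precisely, and on matching the curvature of the $\sqrt{\min(x,1-x)}$ envelope to the conductance-scaled step $\kappa_x$ in the induction.
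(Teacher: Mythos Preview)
The paper does not prove this theorem at all: it is stated in the Supplementary Material with the attribution ``the following Theorem from \citet{lovasz1993random}'' and used as a black box, so there is no in-paper argument to compare against. Your proposal is essentially the classical Lov\'{a}sz--Simonovits conductance-profile proof from that reference, and as such it is the right thing to write if you want a self-contained argument; the base case via Cauchy--Schwarz, the one-step recursion $h_{t+1}(x) \le \tfrac12 h_t(x-\kappa_x) + \tfrac12 h_t(x+\kappa_x)$, and the inductive envelope $\sqrt{M}(1-\kappa^2/2)^t\sqrt{\min(x,1-x)}$ are all standard and correct.

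Two small cautions. First, the recursion in the Lov\'{a}sz--Simonovits argument does not come from reversibility per se; it comes from a laziness (or ``strong convexity'') hypothesis that guarantees each step leaves at least half the mass in place, which is what lets you average the two shifted profiles. The theorem as stated in the paper omits any such hypothesis, so if you are writing a genuine proof you should either add the lazy-chain assumption or note that one passes to the lazy version $\tfrac12(I+\mc P)$. Second, your spectral alternative would yield only $(1-\kappa^2/8)^t$ via the Lawler--Sokal bound \eqref{eq:KappaDelta} quoted here, not $(1-\kappa^2/2)^t$; you flag this at the end, but the sentence ``the Cheeger bound $\delta \ge \kappa^2/2$'' earlier in that paragraph is inconsistent with the constant the paper actually uses.
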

The total variation bound in \eqref{eq:mcmcbias} is enough to give useful bounds on the bias term in \eqref{eq:biasvar} for bounded $f$. When $\kappa$ is near zero, Theorem \ref{ThmBiasMcmc} implies that the number of steps required to attain bias $\epsilon$ -- the mixing time $\tau_{\epsilon}$ -- scales approximately like $\kappa^{-2}$. For $j \gg \frac{\log(\frac{\epsilon}{\sqrt{M}})}{\log(1 - \frac{\kappa^{2}}{2})} \approx \kappa^{-2} \log\big(\frac{\epsilon}{\sqrt{M}} \big)$, we have 
\be \label{eq:mixtime}
\sup_{ S} | \bb P(\Theta_{t+1} \in S) - \Pi(S) | &\leq \sqrt{M} \bigg(1 - \frac{\kappa^{2}}{2}\bigg)^j \\
&\approx \sqrt{M} \exp\bigg(-\kappa^{2} \times \kappa^{-2} \times \log\bigg(\frac{\epsilon}{\sqrt{M}}\bigg) \bigg) \\
&= \sqrt{M} \times \frac{\epsilon}{\sqrt{M}} = \epsilon.
\ee
For reversible Markov operators $\mc P$, Theorem 2.1 of \cite{lawler1988bounds} relates conductance to the spectral gap:
\begin{theorem} \label{ThmLawSok}
The spectral gap $\delta(\mc P)$ of a reversible Markov operator $\mc P$ satisfies
\be 
\frac{\kappa^{2}}{8} \leq \delta(\mc P) \leq \kappa. \label{eq:delta-kappa}
\ee 
\end{theorem}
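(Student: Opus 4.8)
The plan is to read \eqref{eq:delta-kappa} as the discrete Cheeger inequality for the reversible operator $\mc P$ and prove its two halves separately. Throughout I work with the Dirichlet form $\mc E(f,f) = \langle (I - \mc P)f, f\rangle_\Pi = \tfrac12 \iint (f(x)-f(y))^2\, \Pi(dx)\, \mc P(x,dy)$, which is symmetric and nonnegative by reversibility, together with the variational characterization $\delta(\mc P) = \inf\{\mc E(f,f)/\var_\Pi(f) : \var_\Pi(f) > 0\}$ valid for reversible $\mc P$. The upper bound $\delta \le \kappa$ is then immediate: for any measurable $S$ with $0 < \Pi(S) < 1$, take $f = \mathbbm{1}_S$ in the Rayleigh quotient. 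A direct computation gives $\mc E(\mathbbm{1}_S,\mathbbm{1}_S) = \int_S \mc P(x,S^c)\,\Pi(dx)$ and $\var_\Pi(\mathbbm{1}_S) = \Pi(S)(1-\Pi(S))$, so the quotient is exactly $\kappa(S)$; taking the infimum over $S$ and comparing with the variational formula yields $\delta \le \inf_S \kappa(S) = \kappa$.

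The lower bound $\kappa^2/8 \le \delta$ is the substantive direction, and the first step is to manufacture a good nonnegative test function. Let $\phi$ be an eigenfunction for the second eigenvalue $\lambda = 1 - \delta$, normalized so $\int \phi\, d\Pi = 0$; replacing $\phi$ by $-\phi$ if necessary, assume $\Pi(\phi > 0) \le 1/2$, and set $g = \phi_+ = \max(\phi, 0)$, so that $\Pi(\mathrm{supp}\, g) \le 1/2$. A truncation lemma shows the Rayleigh quotient does not increase under this operation: writing $\phi = \phi_+ - \phi_-$ with $\phi_\pm \ge 0$ of disjoint support, one has $\langle (I-\mc P)\phi_+, \phi_+\rangle = \delta\,\langle \phi, \phi_+\rangle + \langle (I-\mc P)\phi_-, \phi_+\rangle$, and since $\langle \phi, \phi_+\rangle = \|\phi_+\|_2^2$ and $\langle \mc P\phi_-, \phi_+\rangle \ge 0$ for the nonnegative functions $\phi_\pm$, this is at most $\delta\,\|\phi_+\|_2^2$. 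Hence $\mc E(g,g) \le \delta\,\|g\|_2^2$.

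I then run the coarea/Cauchy--Schwarz argument on $g$. Setting $I_g = \tfrac12 \iint |g(x)^2 - g(y)^2|\,\Pi(dx)\,\mc P(x,dy)$ and using the layer-cake identity $|g(x)^2 - g(y)^2| = \int_0^\infty |\bone{g(x)^2 > t} - \bone{g(y)^2 > t}|\, dt$, Fubini gives $I_g = \int_0^\infty Q(S_t, S_t^c)\, dt$ with $S_t = \{g^2 > t\}$ and $Q(S,S^c) = \int_S \mc P(x,S^c)\,\Pi(dx)$. Since $\Pi(S_t) \le \Pi(\mathrm{supp}\, g) \le 1/2$, the definition of $\kappa$ gives $Q(S_t,S_t^c) \ge \kappa\,\Pi(S_t)(1-\Pi(S_t)) \ge \tfrac{\kappa}{2}\,\Pi(S_t)$, so $I_g \ge \tfrac{\kappa}{2}\int_0^\infty \Pi(g^2 > t)\, dt = \tfrac{\kappa}{2}\|g\|_2^2$. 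On the other hand, factoring $g^2(x) - g^2(y) = (g(x)-g(y))(g(x)+g(y))$, applying Cauchy--Schwarz in the measure $\Pi(dx)\,\mc P(x,dy)$, and using $(g(x)+g(y))^2 \le 2(g(x)^2 + g(y)^2)$ together with reversibility gives $I_g \le \sqrt{2\,\mc E(g,g)}\,\|g\|_2$. Combining the two estimates yields $\tfrac{\kappa}{2}\|g\|_2 \le \sqrt{2\,\mc E(g,g)}$, i.e. $\kappa^2/8 \le \mc E(g,g)/\|g\|_2^2 \le \delta$, which is the claim.

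The hard part is the lower bound, and within it two genuinely delicate points: the truncation lemma, which converts the eigenfunction into a nonnegative test function supported on a set of measure at most $1/2$ without inflating the Rayleigh quotient, and the coarea/Cauchy--Schwarz chain, which is where the square and the factor of $8$ enter. A further technical wrinkle on the Polish (rather than finite) state space is that the infimum defining $\delta$ need not be attained by a genuine eigenfunction; in that case I would replace $\phi$ by a variance-normalized minimizing sequence $\phi_k$ with $\mc E(\phi_k,\phi_k)/\var_\Pi(\phi_k) \to \delta$ and check that both the truncation step and the coarea estimate pass to the limit.
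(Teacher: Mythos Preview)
The paper does not prove this theorem; it is stated with a citation to Theorem~2.1 of \cite{lawler1988bounds}, so there is no in-paper argument to compare against. Your proposal is the standard proof of the Cheeger inequality for reversible chains, essentially that of Lawler--Sokal, and the main steps (indicator test function for the upper bound; positive-part truncation plus coarea/Cauchy--Schwarz for the lower bound) are correct.

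Two minor caveats. First, the paper defines $\delta(\mc P) = 1 - \sup\{|\lambda| : \lambda \in S,\ \lambda \ne 1\}$, the \emph{absolute} spectral gap, whereas your variational formula $\delta = \inf \mc E(f,f)/\var_\Pi(f)$ yields $1 - \sup\{\lambda : \lambda \in S,\ \lambda \ne 1\}$; these differ when $\mc P$ has spectrum near $-1$. Your upper bound $\delta \le \kappa$ survives regardless (the absolute gap is at most the gap from $1$), but the lower bound $\kappa^2/8 \le \delta$ can fail for the absolute gap without an extra hypothesis such as positive semidefiniteness---which does hold for the data-augmentation marginal chains that are the paper's application. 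Second, your closing remark about the infimum not being attained is well taken; a clean way to avoid the approximate-eigenfunction limit is to work directly with an arbitrary mean-zero $\phi$, shift by a median $m$ so that both $(\phi-m)_\pm$ are supported on sets of $\Pi$-measure at most $1/2$, apply your coarea estimate to each piece, and use $\mc E(g_+,g_-) \le 0$ to recombine, giving $\mc E(\phi,\phi) \ge \tfrac{\kappa^2}{8}\|\phi-m\|_2^2 \ge \tfrac{\kappa^2}{8}\var_\Pi(\phi)$ without any spectral decomposition.
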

Clearly, one gives up a factor of either $\delta$ or $\kappa$ when transitioning between bounds on $\kappa$ and bounds on $\delta$ via \eqref{eq:delta-kappa}. 

\section{Results on mixing times}
We give some additional results on mixing times for the data augmentation algorithms.

The following remark shows that Theorem \ref{ThmBiasMcmc} applies to the P\'{o}lya-Gamma sampler. It is established in the course of proving Theorem \ref{thm:pgcond}.
\begin{corollary}[Warm start for P\'{o}lya-Gamma] \label{cor:wspg}
Let $\hat{\theta}$ be the posterior mode for the model in \eqref{eq:InterceptModel} with $g^{-1}$ the inverse logit link. Then 
\be 
\mu_{n} &= \mathrm{Uniform}\left(\hat{\theta} - \frac{1}{\log(n)}, \hat{\theta} + \frac{1}{\log(n)} \right) \notag \\
\intertext{satisfies}
\sup_{A \subset \mathbb{R}} \frac{\mu_{n}(A)}{\Pi(A|y)} &\lesssim (\log n)^{2}   \label{IneqPgcondWarm}
\ee
where $\Pi$ is the posterior measure, and thus provides a `warm start' distribution for the P\'{o}lya-Gamma sampler. Note that $\hat{\theta}$ is the unique solution to $\frac{\theta}{B} + n \frac{e^\theta}{1 + e^\theta} = 1$. 
\end{corollary}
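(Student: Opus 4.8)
The plan is to observe that $\sup_{A} \mu_n(A)/\Pi(A \mid y)$ is nothing but the essential supremum of the density ratio $d\mu_n/d\Pi$, and that the one nontrivial ingredient—a lower bound on the posterior density throughout $I(n)$—is already produced in Part (b) of the proof of Theorem \ref{thm:pgcond}. First I would reduce the supremum to subsets of $I(n) = [\widehat\theta - (\log n)^{-1}, \widehat\theta + (\log n)^{-1}]$. For an arbitrary measurable $A$, set $A_0 = A \cap I(n)$; since $\mu_n$ is supported on $I(n)$ we have $\mu_n(A) = \mu_n(A_0)$, while $\Pi(A \mid y) \ge \Pi(A_0 \mid y)$, so $\mu_n(A)/\Pi(A \mid y) \le \mu_n(A_0)/\Pi(A_0 \mid y)$ whenever the denominator is positive. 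The supremum is therefore attained over subsets of $I(n)$, on which $\mu_n$ has the constant density $\tfrac12 \log n$, giving
\[
\frac{\mu_n(A_0)}{\Pi(A_0 \mid y)} = \frac{\tfrac12 (\log n)\, |A_0|}{\int_{A_0} p(\theta \mid y)\, d\theta} \le \frac{\tfrac12 \log n}{\inf_{\theta \in I(n)} p(\theta \mid y)},
\]
where $|A_0|$ is Lebesgue measure. Thus it suffices to show $\inf_{\theta \in I(n)} p(\theta \mid y) = \Omega((\log n)^{-1})$.

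This lower bound is precisely the quantity $c^*(n)$ appearing in \eqref{IneqCor2}, and I would invoke the argument already given in Part (b). In outline: by \eqref{IneqNegligibleGeneral} the posterior places mass $1 - o(1)$ on $I'(n) = (-5\log n, 3\log n)$, an interval of width $8\log n$, so its average density over $I'(n)$ is $\Omega((\log n)^{-1})$; by unimodality (Lemma \ref{LemmaUnimodality}) the maximal value $p(\widehat\theta \mid y)$ is at least this average, whence $p(\widehat\theta \mid y) = \Omega((\log n)^{-1})$. The near-constancy estimate \eqref{IneqNearConstBd} applies to any pair of points within $(\log n)^{-1}$ of each other inside the window $-\log n \pm A\log\log n$, and that window contains all of $I(n)$ once $\widehat\theta = -\log n + \bigO{\log\log n}$ from \eqref{IneqThetaMax} is used. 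Hence for every $\theta \in I(n)$ we get $p(\theta \mid y) \gtrsim p(\widehat\theta \mid y) = \Omega((\log n)^{-1})$, propagating the bound from the mode to the whole interval.

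Combining the two estimates yields $\mu_n(A)/\Pi(A \mid y) \le \tfrac12 (\log n)\,/\,\Omega((\log n)^{-1}) = \bigO{(\log n)^2}$ uniformly in $A$, which is the claim. The genuinely hard step—the density lower bound $c^*(n) = \Omega((\log n)^{-1})$—was already the main obstacle in verifying \eqref{IneqCor2} during the proof of Theorem \ref{thm:pgcond}, so I do not expect any new difficulty here; this corollary is essentially a repackaging of that bound together with the elementary density ratio computation. The only point requiring mild care is the reduction to subsets of $I(n)$ and the identification of the set supremum with the essential supremum of $d\mu_n/d\Pi$, both of which follow from the monotonicity argument in the first paragraph.
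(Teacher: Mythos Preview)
Your proposal is correct and follows essentially the same approach as the paper, which disposes of the corollary in a single sentence: ``Equality \eqref{IneqPgcondWarm} follows immediately from inequalities \eqref{IneqNearConstBd} and \eqref{IneqNegligibleGeneral}.'' You have simply filled in the details of that implication---reducing the supremum to subsets of $I(n)$, bounding the ratio by $\tfrac12(\log n)/\inf_{I(n)} p(\theta\mid y)$, and then invoking the density lower bound $c^*(n)=\Omega((\log n)^{-1})$ obtained from those two inequalities (note that \eqref{IneqNearConstBd} is in Part~(a), not Part~(b), but your argument uses both parts correctly).
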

Combined with \eqref{eq:mixtime}, Corollary \ref{cor:wspg} therefore indicates that $\epsilon$-mixing times for the P\'{o}lya-Gamma sampler scale approximately like $\kappa^2_n(\mc P) \lesssim (\log n)^{11} n^{-1}$. The following corollary shows that Theorem \ref{ThmBiasMcmc} also applies to the Albert and Chib sampler. It is also established in the proof of Theorem \ref{thm:pgcond}.

\begin{corollary}[Warm start for Albert and Chib] \label{cor:wsac}
Let $\Phi$ be the standard Gaussian cumulative distribution function. Then 
$$\mu_n = \mathrm{Uniform}\left(\Phi^{-1}\left(\frac{B+2}{2 (Bn+2)} \right),\Phi^{-1}\left( \frac{2(B+2)}{Bn+2} \right) \right)$$ 
satisfies
\begin{align} \label{IneqAccondWarm}
\sup_{A \subset \mathbb{R}} \frac{\mu_n(A)}{\Pi(A|y)} \lesssim \log n
\end{align} 
and thus provides a `warm start' distribution for the Albert and Chib sampler. 
\end{corollary}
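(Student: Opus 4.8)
The plan is to turn the supremum of the measure ratio into a pointwise density bound and then control the density on the support of $\mu_{n}$ using the two estimates already needed for the probit conductance bound \eqref{IneqAccondMain}. Write $J_{n} = [\Phi^{-1}(\tfrac{B+2}{2(Bn+2)}),\, \Phi^{-1}(\tfrac{2(B+2)}{Bn+2})]$ for the support of $\mu_{n}$, and let $\pi(\theta \mid y)$ denote the probit posterior density, which is proportional to $\Phi(\theta)(1-\Phi(\theta))^{n-1} e^{-\theta^{2}/(2B)}$ and is strictly positive on all of $\mathbb{R}$. Since $\mu_{n}$ is uniform on $J_{n}$ it is absolutely continuous with respect to $\Pi$, so the supremum over sets is the essential supremum of the Radon--Nikodym derivative:
$$
\sup_{A \subset \mathbb{R}} \frac{\mu_{n}(A)}{\Pi(A \mid y)} = \frac{1}{|J_{n}|}\, \sup_{\theta \in J_{n}} \frac{1}{\pi(\theta \mid y)} = \frac{1}{|J_{n}|\, \inf_{\theta \in J_{n}} \pi(\theta \mid y)}.
$$
It therefore suffices to show $\inf_{\theta \in J_{n}} \pi(\theta \mid y) \gtrsim (|J_{n}|\, \log n)^{-1}$.

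Next I would establish the probit analogues of \eqref{IneqNearConstBd} and \eqref{IneqNegligibleGeneral}, both of which are produced while verifying Corollary \ref{CorCondIneq} for the Albert and Chib sampler. The \emph{near-constancy} estimate comes from the fact that the two endpoint probabilities of $J_{n}$ have ratio exactly $4$ and both behave like $1/n$: on $J_{n}$ the term $\log \Phi(\theta)$ varies by $\log 4$, the term $(n-1)\log(1-\Phi(\theta)) \approx -(n-1)\Phi(\theta)$ varies by $\bigO{1}$, and the Gaussian factor varies by $\bigO{1}$, so $\sup_{J_{n}} \pi(\cdot \mid y) \le C_{0} \inf_{J_{n}} \pi(\cdot \mid y)$ for some constant $C_{0}$. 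In particular $\inf_{J_{n}} \pi(\cdot \mid y) \ge C_{0}^{-1}\, \Pi(J_{n}\mid y)/|J_{n}|$, which collapses the target into the single estimate $\Pi(J_{n}\mid y) \gtrsim 1/\log n$.

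To bound $\Pi(J_{n}\mid y)$ from below I would combine a width computation with the \emph{negligibility} estimate. Writing the endpoints as $-z_{1}, -z_{2}$ and using the left-tail expansion $\Phi^{-1}(p) = -\sqrt{2\log(1/p)}\,(1+o(1))$, one gets $z_{1}^{2} - z_{2}^{2} \to 2\log 4$, so $|J_{n}| = z_{1}-z_{2} = (z_{1}^{2}-z_{2}^{2})/(z_{1}+z_{2})$ is of order $(\log n)^{-1/2}$; the same expansion shows $J_{n}$ brackets the posterior mode $\widehat{\theta} = -\sqrt{2\log n}\,(1+o(1))$, since $\Phi(\widehat{\theta}) \approx (1+1/B)/n$ lies strictly between the two endpoint probabilities. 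The negligibility estimate exhibits an interval $I'(n)$ of width of order $(\log n)^{1/2}$ outside of which the posterior integrates to $o(1)$; hence the average density inside $I'(n)$, and by near-constancy the density on $J_{n}$, is $\Omega((\log n)^{-1/2})$. Combining with $|J_{n}|$ of order $(\log n)^{-1/2}$ gives $\Pi(J_{n}\mid y) \gtrsim |J_{n}|\, \inf_{J_{n}} \pi(\cdot\mid y) \gtrsim 1/\log n$, and substituting into the displayed identity yields $\sup_{A} \mu_{n}(A)/\Pi(A\mid y) \lesssim \log n$.

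The main obstacle is the absence of a closed form for the probit link: unlike the algebraically clean logit expressions, every width and density estimate must be extracted from the asymptotics of $\Phi$ and $\Phi^{-1}$ in the far left tail, where $\Phi^{-1}(p) \sim -\sqrt{2\log(1/p)}$. The delicate point is that the near-constancy scale $|J_{n}|$ is of order $(\log n)^{-1/2}$ while the negligible-complement scale is of order $(\log n)^{1/2}$, and it is precisely their reciprocal orders whose product produces the single factor of $\log n$; keeping both expansions accurate to the needed order — in particular verifying that the engineered endpoints $\tfrac{B+2}{2(Bn+2)}$ and $\tfrac{2(B+2)}{Bn+2}$ straddle $\widehat{\theta}$ and pin down the width — is the technical heart of the argument.
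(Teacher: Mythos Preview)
Your proposal is correct and follows essentially the same route as the paper: reduce the measure ratio to $1/(|J_{n}|\inf_{J_{n}}\pi)$, then combine the near-constancy estimate \eqref{eq:lbcons}, the width computation \eqref{eq:intwidth} giving $|J_{n}|\asymp(\log n)^{-1/2}$, and the negligibility estimate \eqref{eq:negreg} on an interval of width $\bigO{(\log n)^{1/2}}$ to get $\inf_{J_{n}}\pi\gtrsim(\log n)^{-1/2}$; the paper invokes exactly these three ingredients (together with Lemma~\ref{LemmaNormalTails}) and then says the warm-start bound ``follows immediately.'' One small slip: $\Phi(\widehat{\theta})\approx(1+2/B)/n$ rather than $(1+1/B)/n$, but either value lies strictly between the endpoint probabilities, so the bracketing claim is unaffected.
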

Combined with \eqref{eq:mixtime}, Corollary \ref{cor:wsac} therefore indicates that $\epsilon$-mixing times for the Albert and Chib sampler scale approximately like $\kappa^2_n(\mc P) \lesssim (\log n)^5 n^{-1}$

\section{Additional empirical analysis}
\subsection{Empirical analysis of efficiency}
In this section, we estimate empirically the efficiency of the two data augmentation Gibbs samplers via finite path estimates of $\sigma^2_f$ in \eqref{eq:corvar}.  These empirical estimates can be compared to our estimate based on the conductance that the asymptotic variance is approximately order $n^{1/2}$ to order $n$ for typical functions.  

Let $\sigma^2_f(n)$ be the asymptotic variance when the sample size is $n$, and assume $\sigma^2_f(n) \approx C n^a$ for large $n$, so that $\log\{\sigma^2_f(n)\}=  \log(C) + a \log(n)$. This suggests first estimating $\sigma^2_f(n)$ for different values of $n$, and then estimating $a$ by regression of $\log\{ \sigma^2_f(n)\}$ on $\log(n)$. 

We estimate $\sigma^2_f$ based on autocorrelations via a truncation of (\ref{eq:corvar}), 
\be
\widehat{\rho}_f = 1 + 2 \sum_{k=1}^{K} \widehat{\eta}_k, \label{eq:rhoest}
\ee
where $\widehat{\eta}_k$ is a point estimate of $\eta_k$. It is important to choose $K \gg \{\delta_n(\mc P)\}^{-1}$. The lower bounds derived in Section \ref{sec:theory} have $\{\delta_n(\mc P)\}^{-1} \ge n^{1/2}$ up to a log factor and a universal constant, so we use $K = n$ to compute the sum in (\ref{eq:rhoest}). To further improve the estimates, we use multiple chains to compute $\widehat{\eta}_k$ and run all of the chains for $10^6$ iterations.

Figure \ref{fig:vnk} shows $\log(n)$ versus $\log(\widehat{\rho}_f)$ for values of $n$ between 10 and 10,000 for the P\'{o}lya-Gamma and Albert and Chib sampler. The relationships are linear, and the least squares estimate of the slope is $0.86$ for P\'{o}lya-Gamma and 0.84 for Albert and Chib. This is in the range of $n^{1/2}$ to $n$ estimated from our upper bound on the conductance.
\begin{figure}[h]
	\centering
	\begin{tabular}{cc}
		P\'{o}lya-Gamma &  Albert and Chib \\
		\includegraphics[width=0.4\textwidth]{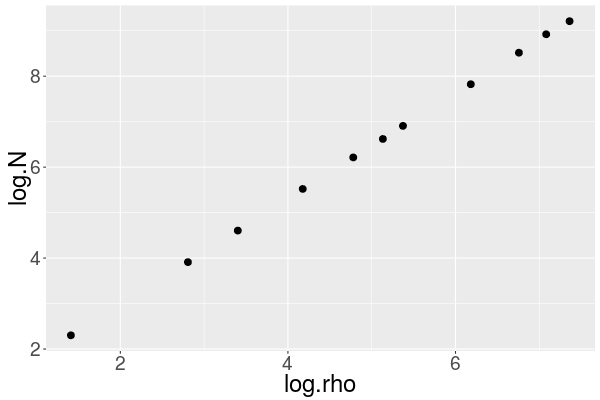} &  \includegraphics[width=0.4\textwidth]{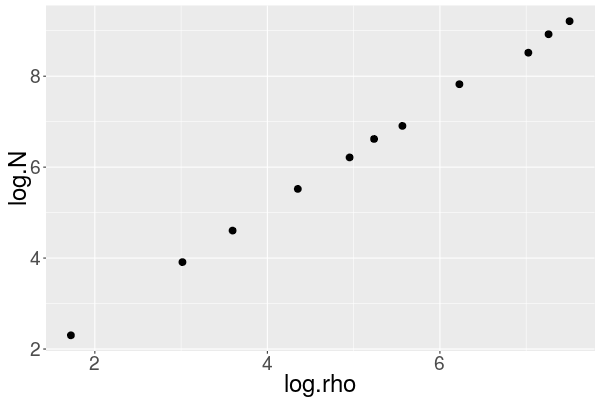} \\
	\end{tabular}
	\caption{Plots of $\log(n)$ versus $\sigma^2(n)$ for different values of $n$. The estimated values of $a$ are $0.86$ and $0.84$, respectively. } \label{fig:vnk}
\end{figure}

\subsection{Data augmentation algorithms for multinomial likelihoods}
So far we have considered data augmentation algorithms for binomial likelihoods. Similar algorithms exist for multinomial logit and probit models. Specifically, let
\be \label{eq:mnlp}
y &\sim \text{Multinomial}(n,\pi), \quad \pi = g^{-1}(\theta),\quad \theta \sim N(0,B),
\ee
where $y$ is a length $d$ vector of nonnegative integers whose sum is $n$, $\pi = (\pi_1,\ldots,\pi_d)^{\T}$ is a probability vector, and $g^{-1}(\theta)$ is a multinomial logit or probit link function.  Posterior computation under (\ref{eq:mnlp}) is commonly performed using data augmentation algorithms of the form 
in \eqref{eq:daupdate}.  \cite{polson2013bayesian} describe a P\'{o}lya-Gamma sampler for the multinomial logit, which is implemented in \texttt{BayesLogit}, 
while \cite{imai2005bayesian} propose a data augmentation Gibbs sampler for the multinomial probit, which is implemented in \texttt{R} package \texttt{MNP}. 

We study a synthetic data example where $y$ is a $4 \times 1$ count vector with entries adding to $n$. The first three entries of $y$ are always 1, the final entry is $n-3$, and a series of values of $n$ between $n=10$ and $n=10,000$ are considered.  Estimated values of $\Teff/T$ for the first three entries of $\theta$ for both algorithms are shown in Table \ref{tab:mnlp}. The results are similar to those for the binomial logit and probit, and are consistent across the different entries of $\theta$. It is exceedingly common for contingency tables to have many cells with small or zero entries.  Our results suggest that data augmentation algorithms should be avoided in such settings.

\begin{table}[ht]
\centering
\caption{Estimated values of $T_{\text{eff}}/T$ for the three entries of $\theta$ for multinomial logit and probit data augmentation for increasing values of $n$ 
         with data $y = (1,1,1,n-3)$. Results are based on 5,000 samples gathered after discarding 5,000 samples as burn-in.} 
\label{tab:mnlp}
\begin{tabular}{rrrrrrr}
  \hline
 & theta1 & theta2 & theta3 & theta1 & theta2 & theta3 \\ 
  \hline
n=10 & 0.0947 & 0.0974 & 0.1645 & 0.2328 & 0.2082 & 0.2638 \\ 
  n=50 & 0.0280 & 0.0292 & 0.0526 & 0.0806 & 0.0634 & 0.0548 \\ 
  n=100 & 0.0181 & 0.0198 & 0.0379 & 0.0379 & 0.0365 & 0.0399 \\ 
  n=500 & 0.0041 & 0.0033 & 0.0078 & 0.0107 & 0.0115 & 0.0150 \\ 
  n=1000 & 0.0027 & 0.0021 & 0.0065 & 0.0068 & 0.0064 & 0.0044 \\ 
  n=5000 & 0.0010 & 0.0010 & 0.0032 & 0.0013 & 0.0019 & 0.0012 \\ 
  n=10000 & 0.0002 & 0.0018 & 0.0017 & 0.0029 & 0.0008 & 0.0007 \\ 
   \hline
\end{tabular}
\end{table}

\section{Details of Data Augmentation samplers}
We provide more detail on the two data augmentation samplers considered in the main article. \cite{polson2013bayesian} introduce a data augmentation Gibbs sampler for posterior computation when $g^{-1}$ in \eqref{eq:InterceptModel} is the inverse logit link. The sampler has update rule given by
\be
\omega \mid \theta &\sim \text{PG}(n,\theta) \\
\theta \mid \omega &\sim \No{(\omega+B^{-1})^{-1} \alpha}{(\omega+B^{-1})^{-1}},
\ee
where $\alpha = y-n/2$ and $\text{PG}(a,c)$ is the \emph{P\'{o}lya-Gamma} distribution with parameters $a$ and $c$. The transition kernel $\mc P(\theta;\cdot)$ given by this update has $\theta$-marginal invariant measure the posterior $\Pi(\theta \mid y)$ for the model in \eqref{eq:InterceptModel}. 

A similar data augmentation scheme exists for the case where $g^{-1}$ is the inverse probit $\Phi(\cdot)$. Initially proposed by \cite{albert1993bayesian}, the sampler has update rule
\be \label{eq:ac}
\omega \mid \theta &= \sum_{i=1}^y z_i + \sum_{i=1}^{n-y} u_i, \qquad z_i \sim \text{TN}(\theta,1;0,\infty), \quad u_i \sim \text{TN}(\theta,1;-\infty,0) \\
\theta \mid \omega &\sim \No{(n+B^{-1})^{-1} \omega}{(n+B^{-1})^{-1}},
\ee
where $\text{TN}(\mu,\tau^2;a,b)$ is the normal distribution with parameters $\mu$ and $\tau^2$ truncated to the interval $(a,b)$. The transition kernel $\mc P(\theta;\cdot)$ for $\theta$ defined by this update has $\theta$-marginal invariant distribution $\Pi(\theta \mid y)$ for the model in \eqref{eq:InterceptModel} when $g^{-1} = \Phi$. It is clear from \eqref{eq:ac} that the computational complexity per iteration scales linearly in $n$ for this algorithm. Although a recent manuscript proposes some more efficient samplers, the samplers in \cite{polson2013bayesian} for $\mathrm{PG}(n,\theta)$ also scale linearly in $n$.


\section{Proof of \eqref{IneqAccondMain} in Theorem \ref{thm:pgcond}}
First we give a lemma that is used in the main proof to bound $\Phi^{-1}(x)$ and $(\Phi^{-1}(x))^2$. 
\begin{lemma} \label{LemmaNormalTails}
Let $\Phi(\cdot)$ be the standard normal distribution function and fix $x > 0$. Then, as $n \rightarrow \infty$,
\be
\Phi^{-1}\left(\frac{x}{n} \right) &= -(2\log(n/x))^{1/2} \left( 1 - \frac{\log\left(\frac{4}{\pi} \log(n/x) \right)}{2\log(n/x)} + \bigO{\frac{1}{( \log(n/x))^{1.5}}} \right)^{1/2}  \label{eq:Phiapprox} \\
&= -(2\log(n/x))^{1/2}(1+o(1)), 
\ee
Furthermore,
\be 
\left(\Phi^{-1}\left(\frac{x}{n} \right)\right)^2 = 2 \log\left(\frac{n}{x} \right) - \log \left( 2 \log \left( \frac{n}{x} \right)\right) + \log \left(\frac{2}{\pi} \right) + \bigO{\frac{1}{\log(n/x)}}. \label{eq:Phi2approx}
\ee 
\end{lemma}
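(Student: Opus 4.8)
The plan is to convert both claims into a single problem: asymptotically inverting the Gaussian upper tail, and then reading off \eqref{eq:Phi2approx} and \eqref{eq:Phiapprox} from the solution. Since $x/n \to 0$, the quantile diverges, so I write $\Phi^{-1}(x/n) = -s_n$ with $s_n > 0$; by the symmetry $\Phi(-s) = 1 - \Phi(s)$ the defining relation becomes $1 - \Phi(s_n) = x/n$. Writing $L = \log(n/x) \to \infty$, the task is exactly to locate the point $s_n$ at which the upper tail probability equals $e^{-L}$, and then to expand $s_n$ and $s_n^2$ in powers of $L$.

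The one analytic input is the classical Mills-ratio expansion
\begin{equation}
1 - \Phi(s) = \frac{1}{s \sqrt{2\pi}}\, e^{-s^2/2} \left( 1 - \frac{1}{s^2} + \bigO{s^{-4}} \right), \qquad s \to \infty,
\end{equation}
which I would justify in one line by integration by parts. Taking logarithms of $1 - \Phi(s_n) = x/n$ and substituting this expansion turns the transcendental equation into
\begin{equation} \label{eq:central}
s_n^2 = 2L - \log(s_n^2) - \log(2\pi) + \bigO{s_n^{-2}}.
\end{equation}
Everything else is solving \eqref{eq:central} to the required precision.

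I would solve \eqref{eq:central} by bootstrapping. The leading balance is $s_n^2 \sim 2L$, so $\log(s_n^2) = \log(2L) + o(1)$; inserting this back gives $s_n^2 = 2L - \log(2L) - \log(2\pi) + o(1)$, which yields \eqref{eq:Phi2approx}. Passing to \eqref{eq:Phiapprox} is then purely algebraic: factor $(2L)^{1/2}$ out of the negative square root of this expression and expand $(1 - A)^{1/2} = 1 - \tfrac{A}{2} + \bigO{A^2}$ with $A = (\log(2L) + \log(2\pi))/(2L) = \bigO{L^{-1} \log L}$, which reproduces the stated square-root form with the inner logarithm equal to $\log(2L) + \log(2\pi) = \log(4\pi L)$.

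The routine part is the logarithmic algebra; the delicate step is the error bookkeeping in the bootstrap. Specifically, when I replace $\log(s_n^2)$ by $\log(2L)$ I must control the induced error, and expanding $\log(s_n^2) = \log(2L) - \tfrac{\log(4\pi L)}{2L} + \cdots$ shows this error is of order $(\log L)/L$, which competes with the $\bigO{s_n^{-2}} = \bigO{L^{-1}}$ term inherited from the Mills ratio. I expect the main obstacle to be tracking these $L^{-1}$ and $(\log L)\,L^{-1}$ contributions consistently, and in particular pinning down the exact additive constant in \eqref{eq:Phi2approx} and the exact argument of the inner logarithm in \eqref{eq:Phiapprox}; a stray factor of two in these constants is easy to introduce, so I would cross-check the final expressions against a direct numerical evaluation of $\Phi^{-1}$ at a small argument.
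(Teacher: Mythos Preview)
Your approach is essentially the paper's: take a sharp estimate for $1-\Phi(s)$ as $s\to\infty$, take logarithms to obtain $s^{2}=2L-\log(s^{2})+\text{const}+O(s^{-2})$, and bootstrap. The only cosmetic difference is that the paper invokes the two-sided bounds 7.8.1--7.8.2 of the NIST DLMF, writing the tail as $(2/\pi)^{1/2}e^{-s^{2}/2}\bigl[s+(s^{2}+h(s))^{1/2}\bigr]^{-1}$ with $8/\pi\le h\le 4$, in place of your Mills-ratio expansion via integration by parts; after simplifying, both inputs collapse to the same equation and the same bootstrap. Your instinct to cross-check the constants numerically is well placed: your derivation gives $-\log(2\pi)$ in \eqref{eq:Phi2approx} and an inner logarithm $\log(4\pi L)$ in \eqref{eq:Phiapprox}, whereas the lemma as stated has $+\log(2/\pi)$ and $\log\bigl((4/\pi)L\bigr)$, and those two displays are not even consistent with one another.
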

\begin{proof}
From equations 7.8.1 and 7.8.2 of \cite{NIST:DLMF}, we have for $x\ge 0$ 
\be  \label{IneqMainNormAppr}
\frac{1}{x/\sqrt{2} + (x^2/2 + 2)^{1/2}} &< e^{x^2/2} \int_{x/\sqrt{2}}^{\infty} e^{-t^2} dt \leq \frac{1}{x/\sqrt{2} + (x^2/2 + 4/\pi)^{1/2}} \\
\frac{1}{x+ (x^{2} + 4)^{1/2}} &< ( \pi/2)^{1/2} e^{x^2/2} ( 1 - \Phi(x)) \leq \frac{1}{x+ (x^2 + 8/\pi)^{1/2}} \\
\ee 
Thus, we can write 
\be
(\pi/2)^{1/2} e^{x^2/2} \{1-\Phi(x)\} = \frac{1}{x+(x^2+h(x))^{1/2}}
\ee
for some function $h(x)$ that satisfies $8/\pi \leq h(x) \leq 4$ for all $x \ge 0$, giving
\be
1-\Phi(x) &= (2/\pi)^{1/2} e^{-x^2/2}  \frac{1}{x+(x^2+h(x))^{1/2}}.
\ee
Writing $y = \Phi(x)$, so that $y>1/2$ from the original condition for the inequality, and inverting gives
\be \label{IneqNormFirstCalcH}
1-y &= (2/\pi)^{1/2} e^{-x^2/2}  \left[x+(x^2+h(x))^{1/2}\right]^{-1} \\
\log(1-y) &= \log(2/\pi)/2 -x^2/2 -\log\left(x+(x^2+h(x))^{1/2}\right) \\
x^2 &= -2\log(1-y) + \log(2/\pi) -2\log\left(x+(x^2+h(x))^{1/2}\right).
\ee
We now claim that for any fixed $\epsilon > 0$ and any sufficiently large $x > X(\epsilon)$, we have $[-(2 - \epsilon)\log(1-y)]^{1/2} < x < [-(2 + \epsilon)\log(1-y)]^{1/2}$. To see this, recall that by Inequality \eqref{IneqMainNormAppr}, for any fixed $\epsilon > 0$,
\be
(2/\pi)^{1/2} e^{-(1+\epsilon) x^2/2} \le 1-\Phi(x) \le (2/\pi)^{1/2} e^{-(1-\epsilon) x^2/2}
\ee
for all sufficiently large $x$. Substituting this bound into \eqref{IneqNormFirstCalcH} we obtain 
\be
x^2 &= -2\log(1-y) + \log(2/\pi) - \log(-2\log(1-y)) + \bigO{\frac{1}{-\log(1-y)}},
\ee
for $1-y<1/2$ which gives 
\be
x &= \pm \left[-2\log(1-y) + \log(2/\pi) - \log(-2\log(1-y)) + \bigO{\frac{1}{-\log(1-y)}}\right]^{1/2}.
\ee
To get the result for arguments $1-y<1/2$, we take the negative solution, giving 
\be
x 
&= - (-2\log(1-y))^{1/2} \left[ 1 + \frac{\log\left(-(4/\pi) \log(1-y) \right)}{2\log(1-y)} + \bigO{\frac{1}{[-\log(1-y)]^{1.5}}} \right]^{1/2}.
\ee
Also since $(1+o(1))^{1/2} = 1 + [(1+o(1))^{1/2}-1] = 1 + o(1)$,
\be
x &= - (-2\log(1-y))^{1/2} (1+o(1)), 
\ee

Setting $1-y = x/n$ for $x/n<1/2$ -- the region where $\Phi^{-1}(x/n) < 0$ -- we have
\be
(\Phi^{-1}(x/n))^2 &= 2\log(n/x) + \log(2/\pi) - \log(2\log(n/x)) + \bigO{\frac{1}{\log(n/x)}} \\
\Phi^{-1}(x/n) &= -(2\log(n/x))^{1/2} \left( 1 - \frac{\log\left((4/\pi) \log(n/x) \right)}{2\log(n/x)} + \bigO{\frac{1}{[\log(n/x)]^{1.5}}} \right)^{1/2} \\
&= -(2\log(n/x))^{1/2}(1+o(1)),
\ee
completing the proof. 
\end{proof}

\noindent \textbf{Proof of main result} \\
The main result is proved in four steps; the rationale for each step is outlined in \S \ref{sec:verifycor}:
\begin{enumerate}[(a)]
\item Obtain bounds on quantities that will appear in steps (b) through (d);
\item Find an interval $I'(n)$ outside of which the posterior is negligible, in the sense of integrating to $o(1)$, and find an upper bound for its width;
\item Find an interval $I(n) \subset I'(n)$ containing the posterior mode $\widehat{\theta}$ on which the posterior density ratio is bounded below by a constant and show a lower bound on its width; and
\item Show a concentration inequality for $|\theta_t-\theta_{t+1}|$ when $\theta_t \in I(n)$.
\end{enumerate}

\noindent \textbf{Part (a) : obtaining additional bounds}
Recall that the Albert and Chib sampler has the update rule given by sampling $\omega_{t+1} \mid y, n, \theta_t$ then sampling $\theta_{t+1} \mid \omega_{t+1}$ from a Gaussian. $\omega_{t+1}$ is the sum of $n-y$ independent Gaussians truncated below by zero and $y$ independent Gaussians truncated above by zero; here we always have $y=1$. Then, the expectation and variance of $\omega_{t+1}$ given $\theta_t$ are 
\be
\bb E(\omega_{t+1} \mid \theta_t, n, y) &= (n-1) \left[ \theta_t - \frac{\phi(\theta_t)}{1-\Phi(\theta_t)} \right] + \left[ \theta_t + \frac{\phi(\theta_t)}{\Phi(\theta_t)} \right] \\
&= n\theta_t - (n-1) \frac{\phi(\theta_t)}{1-\Phi(\theta_t)} + \frac{\phi(\theta_t)}{\Phi(\theta_t)}  \\
\mbox{var}(\omega_{t+1} \mid \theta_t, n, y) &= v_t = n +  (n-1) \left[\theta_t \frac{\phi(\theta_t)}{1-\Phi(\theta_t)}-\left( \frac{\phi(\theta_t)}{1-\Phi(\theta_t)} \right)^2 \right] - \theta_t \frac{\phi(\theta_t)}{\Phi(\theta_t)} -\frac{\phi(\theta_t)^2}{\Phi(\theta_t)^2}.
\ee

We now compute the posterior mode $\hat{\theta}$. We begin by reparameterizing our problem by the one-to-one transformation $\theta = \Phi^{-1}(x/n)$. We will compute $\hat{x}$, the posterior mode under this transformation, and then use this to compute the mode $\hat{\theta}$ on the original scale by the equation $\hat{\theta} = \Phi^{-1}(\hat{x}/n)$. 

We will require an approximation to $\phi_B(\Phi^{-1}(x/n))$, where $\phi_B$ is the density of $N(0,B)$. Using \eqref{eq:Phi2approx}, 
\be 
\phi_B\left( \Phi^{-1}\left( \frac{x}{n} \right) \right) &= \frac{1}{(2 \pi B)^{1/2}} \exp \left[-\frac{2 \log (n/x)}{2B} + \frac{1}{2 B} \log \left( 2 \log \left( \frac{n}{x} \right) \right) -\frac{1}{2B} \log \left( 2/\pi \right) + o(1) \right] \\
&= \frac{1}{(2 \pi B)^{1/2}} \left( \frac{x}{n} \right)^{1/B} \left([2 \log(n/x)]^{1/2}\right)^{1/B} \left(\pi/2\right)^{1/2B} \exp(o(1)). \label{eq:phiB}
\ee

The posterior density when $y=1$ is proportional to
\begin{align*}
p(\theta | n,y) \propto n \Phi(\theta) (1-\Phi(\theta))^{n-1} \phi_B(\theta).
\end{align*}
Under our reparameterization, 
\be
\log p(x \mid n,y) &\propto \log x + (n-1) \log \left(1-\frac{x}{n} \right) -\frac{(\Phi^{-1}(x/n))^2}{2B}.
\ee
Differentiating to find the mode,
\be
\frac{\partial}{\partial x} \log p(x \mid n,y) 
&= \frac{1}{x} - \frac{n-1}{n-x} - \frac{(2 \pi)^{1/2}}{Bn} \exp\left( \frac{\Phi^{-1}(x/n)^2}{2} \right) \Phi^{-1}(x/n).
\ee
Using \eqref{eq:Phi2approx} and \eqref{eq:Phiapprox}, we have
\be
\frac{\partial}{\partial x} \log p(x \mid n,y) &= \frac{1}{x} - \frac{n-1}{n-x} - \frac{(2\pi)^{1/2}}{Bn} \left( \frac{n}{x} \right) (2 \log(n/x))^{-1/2} (2/\pi)^{1/2} \\
&\times \exp(o(1)) \left[ -(2 \log(n/x))^{1/2} (1 + o(1)) \right] \\
&=  \frac{1}{x} - \frac{n-1}{n-x} + \frac{2}{Bx} \left( 1 + o(1) \right),
\ee
so in the limit as $n \to \infty$ the posterior mode is 
\be \label{EqPostMode}
\widehat{x} = \frac{n(B+2 + o(1))}{Bn+2}.
\ee
In particular, for large enough $n$, $\widehat{x}/n$ is in the interval 
$$\frac{\widehat{x}}{n} \in \left[\frac{B+2}{2(Bn+2)} ,\frac{2(B+2)}{Bn+2}\right].$$ 

\noindent \textbf{Part (b) : find an interval $I'(n)$ outside of which the posterior is negligible}
We now implement the first part of the approach to showing conditions \eqref{IneqCor1} and \eqref{IneqCor2}. As described in \S \ref{sec:verifycor}, we show an interval $I'(n)$ outside of which the posterior is negligible, that is, integrates to $o(1)$. Fix $ C > 2$ and consider the interval $I'(n)=[\Phi^{-1}(n^{-C^2}),\Phi^{-1}(1-n^{-C^2})]$. 
First we bound the width of this interval and the size of the increments $|\Phi^{-1}(1-n^{-(C+1)^2}) - \Phi^{-1}(1-n^{-C^2})|$. Bounding the width from above is necessary for showing condition \eqref{IneqCor1}, and bounding the size of the increments is necessary to show that the posterior integrates to $o(1)$ outside this interval.  From \eqref{eq:Phiapprox}:
\be
\Phi^{-1}(1-n^{-C^2}) &= -\Phi^{-1}(n^{-C^2}) =  (2\log(n^{C^2}))^{1/2} (1+o(1))  \\
&= C (2 \log(n))^{1/2} (1+o(1)).
\ee
So then 
\be
|\Phi^{-1}(n^{-C^2})-\Phi^{-1}(1-n^{-C^2})| &= 2 C (2 \log(n))^{1/2} (1+o(1)) \label{eq:negregsize}
\ee
and
\be
|\Phi^{-1}(n^{-(C+1)^2})-\Phi^{-1}(1-n^{-C^2})| &= (2 \log(n))^{1/2} (1+o(1)).
\ee
Now we bound the posterior density $\Phi^{-1}(1-n^{-C^2})$, which will be used to bound the integral of the posterior on the complement of $I'(n)$
\begin{align*}
p( \theta \mid y=1) &= n (1-n^{-C^2}) \left( 1- (1-n^{-C^2}) \right)^{n-1} \phi_B(\Phi^{-1}(1-n^{-C^2})) \\
&\le n^{-C^2 n} n^{C^2+1} (2 \pi B)^{-1/2}. 
\end{align*}

We have with $p_1(\theta) = p(\theta \mid y=1)$
\be
\int_{\Phi^{-1}(1-n^{-C^2})}^{\infty} p_1(\theta) d\theta &\le (2\pi B)^{-1/2} \sum_{C=2}^{\infty} (2 \log (n))^{1/2}(1 + o(1)) n^{C^2+1-C^2n} = o(1).   
\ee
So the posterior measure of the part of $\{I'(n)\}^c$ that contains values of $\theta$ greater than those in $I'(n)$ is $o(1)$.

Now we take the same approach to show this for the part of $\{I'(n)\}^c$ consisting of values of $\theta$ less than those in $I'(n)$. We have that $p(\theta \mid y=1)$ for $\theta = \Phi^{-1}(n^{-C^2})$ satisfies
\be
p(\theta \mid y=1) &= n (n^{-C^2}) (1-n^{-C^2})^{n-1} (2 \pi B)^{-1/2} \\
&\le n^{1-C^2} e^{-n^{-(C^2-1)}} (1-n^{-C^2})^{-1} (2 \pi B)^{-1/2} \\
&\le n^{1-C^2} e^{-1} (4/3) (2 \pi B)^{-1/2},
\ee
when $n\ge 2$. So then
\be
\int_{-\infty}^{\Phi^{-1}(n^{-C^2})} p_0(\theta) d\theta &\le (4/3) (2 \pi B)^{-1/2} \sum_{C=2}^{\infty} (2 \log (n))^{1/2}(1 + o(1)) n^{1-C^2} = o(1).
\ee
We conclude
\be 
\int_{\{I'(n)\}^c} p(\theta \mid y=1) d\theta = o(1) \label{eq:negreg}
\ee
for $I'(n)=[\Phi^{-1}(n^{-C^2}),\Phi^{-1}(1-n^{-C^2})]$ with $C > 2$, so the posterior is negligible outside an interval of length $\bigO{\sqrt{\log(n)}}$ based on \eqref{eq:negregsize}.

\noindent \textbf{Part (c): Find an interval $I(n) \subset I'(n)$ containing the mode on which the posterior is almost constant} \\
We now do the second step outlined in \S \ref{sec:verifycor} to show \eqref{IneqCor1} and \eqref{IneqCor2}. We show an interval $I(n)$ containing the posterior mode on which the posterior is bounded below by a constant for all large $n$. Again fix a constant $2<C < \infty$. 
We now show that for $n>N(C)$ sufficiently large, where the function $N(C)$ depends only on $C$, the posterior is almost constant on the interval 
$$I(n)=\left[\Phi^{-1}\left(\frac{B+2}{C (Bn+2)} \right),\Phi^{-1}\left( \frac{C(B+2)}{Bn+2} \right) \right].$$ 
As shown in Equality \eqref{EqPostMode}, this interval includes the posterior mode for all large enough $n$. This interval has width $\Omega \left( (\log n)^{-1/2} \right)$. To see this, put $q(n) = (B+2)^{-1}(Bn + 2)$, then
\be
\left| I(n) \right| &= [2 \log(C q(n)) ]^{1/2} \left[ 1- \frac{\log \left[ (4/\pi) \log (C q(n)) \right]}{2 \log(Cq(n))} + \bigO{ [\log(q(n))]^{-1.5} }\right]^{1/2} \\
&-[2 \log( q(n)/C) ]^{1/2} \left[ 1- \frac{\log \left[ (4/\pi) \log (q(n)/C) \right]}{2 \log(q(n)/C)} + \bigO{[\log(q(n))]^{-1.5} } \right]^{1/2} \\
&\equiv f_1(n,C)-f_2(n,C),
\ee
where $|I(n)|$ is the width of $I(n)$. Now multiply the right side by $f_1(n,C)+f_2(n,C)$ to get
\begin{align*}
&2 \log(C q(n)) -  \log \left[(4/\pi) \log (C q(n)) \right] + \bigO{[\log(q(n))]^{-1/2} } \\
&-2 \log(q(n)/C) + \log \left[ (4/\pi) \log (q(n)/C) \right] + \bigO{ [\log(q(n))]^{-1/2} }  \\
&= 4\log(C) - 2 \log(\log(C)) + o(1).
\end{align*}
Since 
\be
f_1(n,C) + f_2(n,C) = \bigO{(\log n)^{1/2}} 
\ee
we get that 
\be
\left| I(n) \right| = \Omega \left( (\log n)^{-1/2} \right). \label{eq:intwidth}
\ee

Recall the posterior mode is $\widehat{\theta} = \Phi^{-1}((B+2+o(1))/(Bn+2))$, which is contained in $I(n)$ for sufficiently large $n$. Set $\theta_0 = \Phi^{-1}(C(B+2)/(Bn+2))$. We will bound the ratio of the posterior densities on the interval $I_1(n) = [\widehat{\theta},\theta_0]$, which is a subset of our interval $I(n)$. Repeatedly applying Lemma \ref{LemmaNormalTails}, we have
\be
\frac{p\left( y=1 \mid \widehat{\theta} \right)}{p\left( y=1 \mid \theta_0 \right)} &= \frac{n \left(\frac{B+2+o(1)}{Bn +2} \right) \left(1-\left( \frac{B+2+o(1)}{Bn +2} \right)\right)^{n-1} \phi_B\left( \frac{B+2+o(1)}{Bn +2} \right)}{n \left(\frac{C(B+2)}{Bn +2} \right) \left[1-\left( \frac{C(B+2)}{Bn +2} \right)\right]^{n-1} \phi_B\left( \frac{C(B+2)}{Bn +2} \right)} \\
&= \frac{n \left(\frac{B+2}{Bn +2} \right) \left[1-\left( \frac{B+2}{Bn +2} \right)\right]^{n-1} \phi_B\left( \frac{B+2+o(1)}{Bn +2} \right)}{n \left(\frac{C(B+2)}{Bn +2} \right) \left[1-\left( \frac{C(B+2)}{Bn +2} \right)\right]^{n-1} \phi_B\left( \frac{C(B+2)}{Bn +2} \right)} + o(1) \\
&= \frac{\left(\frac{B+2}{Bn +2} \right) \left( \frac{Bn+2}{Bn-B} \right) \left(1- \frac{B+2}{Bn +2} \right)^n \phi_B\left( \frac{B+2}{Bn +2} \right)}{\left(\frac{C(B+2)}{Bn +2} \right) \left( \frac{Bn+2}{Bn-BC} \right) \left(1-\frac{C(B+2)}{Bn +2} \right)^n \phi_B\left( \frac{C(B+2)}{Bn +2} \right)} + o(1)\\
&= \frac{ (n-C) \left(1- \frac{B+2}{Bn +2} \right)^n \phi_B\left( \frac{B+2}{Bn +2} \right)}{C (n-1) \left(1-\frac{C(B+2)}{Bn +1} \right)^n \phi_B\left( \frac{C(B+2)}{Bn +2} \right)} + o(1) \\
&= \left( 1/C + o(1) \right) \left[e^{(B+2)(C-1)/(B+2/n)} + o(1) \right]  \frac{\phi_B\left( \frac{B+2}{Bn +2} \right)}{\phi_B\left( \frac{C(B+2)}{Bn +2} \right)} + o(1) \\
&= \left( 1/C + o(1) \right) \left[e^{(B+2)(C-1)/(B+o(1))} + o(1) \right] \\
&\times \frac{\left( \frac{B+2+o(1)}{Bn+2} \right)^{1/B} \left[2 \log\left( \frac{Bn+2}{B+2+o(1)} \right) \right]^{1/(2B)} e^{o(1)}}{\left( \frac{C(B+2)}{Bn+2} \right)^{1/B} \left[2 \log\left( \frac{Bn+2}{C(B+2)} \right) \right]^{1/(2B)} e^{o(1)}} +o(1)\\
&= \left( 1/C + o(1) \right) \left[e^{(B+2)(C-1)/(B+o(1))} + o(1) \right] \left(1/C + o(1)\right)^{1/B} \\
&\times \left[\frac{\log\left( \frac{Bn+2}{B+2+o(1)} \right)}{\log\left( \frac{Bn+2}{C(B+2)} \right)} \right]^{1/(2B)} e^{o(1)} + o(1) \\
&= \left( 1/C + o(1) \right) \left[e^{(B+2)(C-1)/(B+ o(1))} + o(1) \right] \left(1/C + o(1) \right)^{1/B} \\
&\times \left[\frac{\log(Bn+2)}{\log(Bn+2)-\log(C(B+2))} - o(1) \right]^{1/(2B)} e^{o(1)} + o(1).
\ee
So then 
\be
\lim_{n \to \infty} \frac{p\left( y=1 \mid \widehat{\theta} \right)}{p\left( y=1 \mid \theta_0 \right)} = \left(1/C \right)^{1+1/B} e^{(B+2)(C-1)/B},
\ee
so in particular, since the posterior is unimodal and $\theta_0$ is the endpoint of $I_1(n)$, there exists $N_0(C)<\infty$ such that $n > N_0(C)$ implies 
\be
\inf_{\theta_0 \in I_1(n)} \frac{p\left( y=1 \mid \widehat{\theta} \right)}{p\left( y=1 \mid \theta_0 \right)} > (1/2) \left( 1/C \right)^{1+1/B} e^{(B+2)(C-1)/B}.
\ee
Now define $I_2(n) = [\theta_1,\widehat{\theta}]$ with $\theta_1 = \Phi^{-1}((B+2)/[C(Bn+2)]) = \Phi^{-1}([(1/C)(B+2)]/(Bn+2))$. Then clearly, there exists $N_1(C)<\infty$ such that $n > N_1(C)$ such that 
\be
\inf_{\theta_0 \in I_2(n)} \frac{p\left( y=1 \mid \widehat{\theta} \right)}{p\left( y=1 \mid \theta_1 \right)} > (1/2) C^{1+1/B} e^{(B+2)(1/C-1)/B}.
\ee
Put $N(C) = \max(N_0(C),N_1(C))$. Then since $I(n) = I_1(n) \cup I_2(n)$, $n > N(C)$ implies
\be
\inf_{\theta_0 \in I(n)} \frac{p\left( y=1 \mid \widehat{\theta} \right)}{p\left( y=1 \mid \theta_1 \right)} > (1/2) \min \bigg( &\left(1/C\right)^{1+1/B} e^{(B+2)(C-1)/B},\\
&C^{1+1/B} e^{(B+2)(1/C-1)/B} \bigg). \label{eq:lbcons}
\ee
Combining with (\ref{eq:intwidth}), this implies the posterior density is bounded below by a constant on an interval of width $\Omega \left( [\log(n)]^{-1/2} \right)$. Parts (b) and (c) together then give $c^*(n) = \Omega((\log n)^{-1/2})$, $C^*(n) = \bigO{(\log n)^{1/2}}$, and $1-\epsilon(n) = \Omega((\log n)^{-1})$.

\noindent \textbf{Part (d) : show a concentration result for $|\theta_{t+1} - \theta_t|$ inside $I(n)$}. 
We now show a concentration inequality for $|\theta_t-\theta_{t+1}|$ for $\theta_t$ inside the interval $I(n)$. Fix a constant $2<C<\infty$. When we have $\theta_t$ inside the interval 
$$I(n) = \left[\Phi^{-1}\left(\frac{B+2}{C (Bn+2)} \right),\Phi^{-1}\left( \frac{C(B+2)}{Bn+2} \right) \right],$$ 
we can write $\theta_t = \Phi^{-1}\left( \frac{a_t (B+2)}{Bn +1} \right)$ for $a_t \in [C^{-1},C]$, which by \eqref{EqPostMode} contains the posterior mode for large enough $n$.  The term $\phi\left(\Phi^{-1}\left( \frac{a_t(B+2)}{Bn+2} \right)\right)$ will appear often. We have that
\be
\phi \left( \Phi^{-1} \left( \frac{a_t(B+2)}{Bn+2} \right)\right) &= \bigO{\frac{[2 \log (Bn+2)]^{1/2}}{Bn+2}} 
\ee
by (\ref{eq:phiB}).

The conditional mean of $\theta_{t+1} \mid \omega_{t+1}$ will be approximately $\omega_{t+1}/n$ for large $n$, so we calculate the first two moments of $\omega_{t+1}/n$ for use in the concentration argument that follows. For $\theta_t \in I(n)$ as above we have
\be
\bb E\left( \omega_{t+1}/n \mid \theta_t, y, n \right) &= \Phi^{-1}\left( \frac{a_t(B+2)}{Bn +1} \right) - \phi\left( \Phi^{-1}\left( \frac{a_t(B+2)}{Bn +1} \right) \right) \\
&\times \left(\frac{n-1}{n} \frac{1}{1-\Phi\left(\Phi^{-1}\left( \frac{a_t(B+2)}{Bn +1} \right)\right)} - \frac{1}{n\Phi\left(\Phi^{-1}\left( \frac{a_t(B+2)}{Bn +1} \right)\right)}\right) \\
&= \Phi^{-1}\left( \frac{a_t(B+2)}{Bn +1} \right) - \phi\left( \Phi^{-1}\left( \frac{a_t(B+2)}{Bn +1} \right) \right) \\
&\times  \left(\frac{n-1}{n} \frac{Bn+2}{Bn+2-a_t(B+2)} - \frac{Bn+2}{n a_t(B+2) }\right) \\
&= \Phi^{-1}\left( \frac{a_t(B+2)}{Bn +2} \right) - \bigO{\frac{(2 \log (Bn+2))^{1/2}}{Bn+2}} \bigO{1} \\
&= \Phi^{-1}\left( \frac{a_t(B+2)}{Bn +2} \right) +\bigO{n^{-1} (\log n)^{1/2}}, \label{eq:expectomega}
\ee
and
\be
\mbox{var}(\omega_{t+1}/n \mid \theta_t, y, n) &= \frac{1}{n} +  \frac{n-1}{n^2} \left[\theta_t \frac{\phi(\theta_t)}{1-\Phi(\theta_t)}-\left( \frac{\phi(\theta_t)}{1-\Phi(\theta_t)} \right)^2 \right] - \frac{\theta_t}{n^2} \frac{\phi(\theta_t)}{\Phi(\theta_t)} -\frac{\phi(\theta_t)^2}{n^2 \Phi(\theta_t)^2} \\
&= \frac{1}{n} +  \theta_t \phi(\theta_t) \left(\frac{n-1}{n^2} \frac{1}{1-\Phi(\theta_t)} - \frac{1}{n^2} \frac{1}{\Phi(\theta_t)} \right)  \\
&- \phi(\theta_t)^2 \left( \frac{n-1}{n^2(1-\Phi(\theta_t))^2} -\frac{1}{n^2 \Phi(\theta_t)^2} \right) \\
&= \frac{1}{n} +  \theta_t \phi(\theta_t) \left(\frac{n-1}{n^2} \frac{Bn+2}{Bn+2-a_t(B+2)} - \frac{1}{n^2} \frac{Bn+2}{a_t(B+2)} \right) \\
&- \phi(\theta_t)^2 \left( \frac{n-1 (Bn+2)^2}{n^2(Bn+2-a_t(B+2))^2} -\frac{(Bn+2)^2}{n^2 (a_t(B+2))^2} \right) \\
&= \frac{1}{n} +  \theta_t \phi(\theta_t) \bigO{1} - \phi(\theta_t)^2 \bigO{1} \\
&= \frac{1}{n} +  \Phi^{-1}\left( \frac{a_t(B+2)}{Bn+2} \right) \bigO{\frac{(2 \log (Bn+2))^{1/2}}{Bn+2}}  + \bigO{\frac{2 \log (Bn+2)}{(Bn+2)^2}} \\
&= \frac{1}{n} +  \left( \left(2 \log\left( \frac{Bn+2}{a_t(B+2)} \right)\right)^{1/2}  + \bigO{ \frac{ \log\left( 2 \log\left( Bn+2 \right) \right)}{(2 \log\left( Bn+2 \right))^{1/2}}} \right) \\ 
&\times \bigO{\frac{(2 \log (Bn+2))^{1/2}}{Bn+2}} + \bigO{\frac{2 \log (Bn+2)}{(Bn+2)^2}} \\
&= \frac{1}{n} +  \bigO{\frac{2 \log (Bn+2)}{Bn+2}} + \bigO{\frac{\log( 2 \log(Bn+2))}{Bn+2}} + \bigO{\frac{2 \log (Bn+2)}{(Bn+2)^2}} \\
&= \bigO{\frac{\log n}{n}} \label{eq:varomega}
\ee
Next, for $\theta_t = \Phi^{-1}\left\{ \frac{a_t(B+2)}{Bn+2} \right\}$ -- equivalently, $\theta_t \in I(n)$ -- we want to show a uniform upper bound on $\PR{|\theta_t - \theta_{t+1}| > r \zeta}$. Our strategy is to show a uniform lower bound on $\PR{|\theta_t - \theta_{t+1}| < r \zeta}$ for $\zeta > 0, r\ge 1$. By the triangle inequality,

\be
|\theta_t - \theta_{t+1}| < \left|\theta_t - \frac{\omega_{t+1}}{n} \right| + \left|\frac{\omega_{t+1}}{n} - \theta_{t+1} \right|.
\ee
It follows that,
\be
\PR{|\theta_t - \theta_{t+1}| < r \zeta} &\ge \PR{ \left|\theta_t - \frac{\omega_{t+1}}{n} \right| < \frac{r \zeta}{2}, \left|\frac{\omega_{t+1}}{n} - \theta_{t+1} \right| < \frac{r \zeta}{2}} \\
&\ge \PR{ \left|\theta_t - \frac{\omega_{t+1}}{n} \right| < \frac{r \zeta}{2}} \PR{ \left|\omega_{t+1}/n - \theta_{t+1} \right| < \frac{r \zeta}{2} \quad \bigg| \quad \left|\theta_t - \omega_{t+1}/n \right| < \frac{r \zeta}{2}}. 
\ee
Since $\theta_t = \Phi^{-1}\left( \frac{a_t(B+2)}{Bn+2} \right)$, the first term on the right side is
\be
\pr\left[ \left| \Phi^{-1}\left( \frac{a_t(B+2)}{Bn+2} \right) - \frac{\omega_{t+1}}{n} \right| < \frac{r \zeta}{2} \right].
\ee
By \eqref{eq:varomega} and \eqref{eq:expectomega}, there exists a constant $1<A<\infty$ and an $N_0 < \infty$ such that $n>N_0$ implies $\mbox{var}(\omega_{t+1}/n \mid \theta_t, y, n) < A^2 (\log n) n^{-1}$, and 
\be
\delta_{\omega}(n) = \left|\bb E\left( \omega_{t+1}/n \mid \theta_t, y, n \right) -  \Phi^{-1}\left( \frac{a_t(B+2)}{Bn +2} \right) \right| < 2 A (\log n)^{1/2} n^{-1/2}
\ee
Putting $\zeta = 8 A n^{-1/2} (\log n)^{1/2}$ and recognizing that the distribution of $\omega_{t+1} \mid \theta_t$ is sub-Gaussian, we have, applying \eqref{eq:varomega} and \eqref{eq:expectomega}
\be
\pr \left[ \left| \frac{\omega_{t+1}}{n} -  \Phi^{-1}\left( \frac{a_t(B+2)}{Bn +1} \right) \right| > \frac{r 8 A (\log n)^{1/2}}{4n^{1/2}} + \delta_{\omega}(n) \right] \le e^{-2 r^2}  \label{eq:expconc1} \\
\pr \left[ \left| \frac{\omega_{t+1}}{n} -  \Phi^{-1}\left( \frac{a_t(B+2)}{Bn +1} \right) \right| > \frac{r 8 A (\log n)^{1/2}}{2n^{1/2}} \right] \le e^{-2 r^2}
\ee

For the second term, recall
\be
\theta_{t+1} \mid \omega_{t+1}, n &\sim \mathrm{No}\left( (n+B^{-1})^{-1} \omega_{t+1}, (n+B^{-1})^{-1}\right) \\
&\sim \mathrm{No}\left( \frac{n}{(n+B^{-1})} \frac{\omega_{t+1}}{n}, (n+B^{-1})^{-1} \right).
\ee
So then there exists $N_1<\infty$ depending only on $A,B$ such that for all $n>N_1$, the following holds using a Gaussian tail bound, 
\be
\PR{ \left|\omega_{t+1}/n - \theta_{t+1} \right| > \frac{r 8 A(\log n)^{1/2}}{4n^{1/2}} + \left| \frac{\omega_{t+1}}{n}-\frac{\omega_{t+1}}{n+B^{-1}} \right|} &\le e^{-r^2 \log (n)} \\
\PR{ \left|\omega_{t+1}/n - \theta_{t+1} \right| > \frac{r 8 A(\log n)^{1/2}}{4 n^{1/2}} + \left| \frac{(n+B^{-1})\omega_{t+1}-n\omega_{t+1}}{n(n+B^{-1})} \right|} &\le e^{-r^2 \log (n)} \\
\PR{ \left|\omega_{t+1}/n - \theta_{t+1} \right| > \frac{r 8 A(\log n)^{1/2}}{4 n^{1/2}} + \left| \frac{\omega_{t+1}}{Bn^2 + n} \right|} &\le e^{-r^2 \log (n)}.
\ee
Conditional on $\left| \Phi^{-1}\left( \frac{a_t(B+2)}{Bn+2} \right) - \frac{\omega_{t+1}}{n} \right| < \frac{r 8 A (\log n)^{1/2}}{2 n^{1/2}}$ we have
\be
\left| \frac{\omega_{t+1}}{n} \right| &< \left[2 \log \left( \frac{Bn+2}{a_t(B+2)} \right)\right]^{1/2} (1+o(1)) + \frac{r 8 A(\log n)^{1/2}}{2 n^{1/2}}, \\
&< \left[2 \log \left( \frac{C(Bn+2)}{(B+2)} \right)\right]^{1/2} (1+o(1)) + \frac{r 8 A(\log n)^{1/2}}{2 n^{1/2}},
\ee
where the second line followed since $a_t \in [1/C,C]$. So there exists $C_0 < \infty$ and a function $N_0(r)$ depending only on $r$ and $A<\infty$ such that for every $r \ge 1$, $n>N_0(r)$ implies
\be
|\omega_{t+1}| &< C_0 n (\log n)^{1/2} \\
\left| \frac{\omega_{t+1}}{Bn^2 + n} \right| &< \frac{C_0 (\log n)^{1/2}}{Bn}.
\ee
So then for any $r$ there exists $N_1(r) < \infty$ depending on $r$ and $B$ such that $n>\max(N_1(r),N_0(r))=N_{\max}(r)$ implies 
\be
\left| \frac{\omega_{t+1}}{Bn^2 + n} \right| &< \frac{r 8 A(\log n)^{1/2}}{4 n^{1/2}} \label{eq:zbound}
\ee
Since $r 8 A (\log n)^{1/2}(4 n^{1/2})^{-1}$ is increasing in $r$ and we have $r \ge 1$, choose $N_{\max} = N_{\max}(1)$, so that $n>N_{\max}$ implies \eqref{eq:zbound} uniformly over $r$. Then for all $n>\max(N_{\max},N_1)$, we have
\be
\PR{ \left|\omega_{t+1}/n - \theta_{t+1} \right| > \frac{r 8 A (\log n)^{1/2}}{2 n^{1/2}} \,\bigg|\, |\theta_t - \omega_{t+1}/n| < \frac{r \zeta}{2} } &\le e^{-r^2 \log (n)} \label{eq:expconc2}.
\ee
uniformly over $r \ge 1$. 

Putting together \eqref{eq:expconc1} and \eqref{eq:expconc2} we have for all $n > \max(N_{\max}, N_1, N_0)$, that $\theta_t \in I(n)$ implies
\be
\PR{|\theta_t - \theta_{t+1}| < r \frac{8 A(\log n)^{1/2}}{n^{1/2}}} \ge \left(1-e^{-r^2 \log (n)}\right)\left(1-e^{-2r^2}\right) \label{eq:rineq}
\ee
so 
\be
\pr \left( \left| \theta_{t+1} - \theta_t \right| > \frac{r 8 A (\log n)^{1/2}}{n^{1/2}} \right) &\leq 1-\left(1-e^{-r^2 \log (n)}\right)\left(1-e^{-2r^2 }\right) \\
&\le e^{-r^2 \log (n)} + e^{-2r^2} - e^{-\{r^2 \log (n)+2r^2\}} \\
&\le e^{-r^2 \log (n)} \left(1 - e^{-2r^2}\right) + e^{-2r^2}. 
\ee
For $r \ge 1$, the term $1 - e^{-2r^2}$ is bounded above by $1$, and $e^{-2r^2} < r^{-2}$. So then 
\be
\pr \left( \left| \theta_{t+1} - \theta_t \right| > \frac{r 8 A (\log n)^{1/2}}{n^{1/2}} \right) &\leq \bigO{n^{-1}} + r^{-2}, 
\ee
uniformly over $r$, since $e^{-r^2 \log n} = \bigO{n^{-1}}$ for $r \ge 1$.

Since the posterior is negligible outside a region of width $\bigO{(\log n)^{1/2}}$ by \eqref{eq:negregsize} and is almost constant on an interval of width $\Omega\left( (\log n)^{-1/2} \right)$ by \eqref{eq:intwidth} and \eqref{eq:lbcons}, we have $1-\epsilon(n) = \Omega\left( (\log n)^{-1} \right)$; $\zeta(n) = \bigO{n^{-1/2} (\log n)^{1/2}}$ from \eqref{eq:rineq}, $\gamma = \bigO{n^{-1}}$ from \eqref{eq:rineq} and $c^*(n) = \Omega( (\log n)^{-1/2}), C^*(n) = \bigO{(\log n)^{1/2}}$. This gives 
\be
\kappa(\mc P) = \bigO{(\log n)^{2.5} n^{-1/2}} + \bigO{n^{-1} (\log n)^2} = \bigO{(\log n)^{2.5} n^{-1/2}} 
\ee
by Corollary \ref{CorCondIneq}.

Finally, we prove Inequality \eqref{IneqAccondWarm}. Combining inequalities \eqref{EqPostMode} and \eqref{eq:lbcons} with Lemma \ref{LemmaNormalTails}, we have shown that the mode is contained within an interval of length $\Omega((\log n)^{-1/2})$ for which the density is $\Omega((\log n)^{-1/2})$. Combining inequality \eqref{eq:negreg} with Lemma \ref{LemmaNormalTails}, we have shown that the posterior distribution is negligible outside of an interval of length $\bigO{(\log n)^{1/2}}$. Inequality \eqref{IneqAccondWarm} follows immediately.

\FloatBarrier
\bibliographystyle{apalike}
\bibliography{lvmixing}
\end{document}